\documentclass[12pt,draft,reqno]{amsart}
\setlength{\textheight}{25.2cm}
\setlength{\textwidth}{16cm}
\setlength{\oddsidemargin}{0cm}
\setlength{\evensidemargin}{0cm}
\setlength{\topmargin}{-1.6cm}

\usepackage{amsmath}
\usepackage{amssymb}
\usepackage{amsthm}
\usepackage{bm}
\numberwithin{equation}{section}
\newtheorem{theorem}{Theorem}[section]
\newtheorem{lemma}[theorem]{Lemma}
\newtheorem{proposition}[theorem]{Proposition}

\theoremstyle{definition}
\newtheorem*{remark}{Remark}

\allowdisplaybreaks
\begin{document}
\title[On zeros of bilateral zeta functions]{On zeros of bilateral Hurwitz and periodic zeta and zeta star functions}
\author[T.~Nakamura]{Takashi Nakamura}
\address[T.~Nakamura]{Department of Liberal Arts, Faculty of Science and Technology, Tokyo University of Science, 2641 Yamazaki, Noda-shi, Chiba-ken, 278-8510, Japan}
\email{nakamuratakashi@rs.tus.ac.jp}
\urladdr{https://sites.google.com/site/takashinakamurazeta/}
\subjclass[2010]{Primary 11M35, Secondary 11M20, 11M26}
\keywords{Hurwitz zeta function, periodic zeta function, real and complex zeros}
\maketitle

\begin{abstract}
In this paper, we show the following; \\
(1) The periodic zeta function ${\rm{Li}}_s (e^{2\pi ia})$ with $0<a<1/2$ or $1/2 < a <1$ does not vanish on the real line. \\
(2) All real zeros of $Y(s,a):=\zeta (s,a) - \zeta (s,1-a)$, $O(s,a) := -i {\rm{Li}}_s (e^{2\pi ia}) + i{\rm{Li}}_s (e^{2\pi i(1-a)})$ and $X(s,a) := Y(s,a) + O(s,a)$ with $0 < a < 1/2$ are simple and only at the negative odd integers. \\
(3) All real zeros of $Z(s,a):=\zeta (s,a) + \zeta (s,1-a)$ are simple and only at the non-positive even integers if and only if $1/4 \le a \le 1/2$. \\
(4) All real zeros of $P(s,a):={\rm{Li}}_s (e^{2\pi ia}) + {\rm{Li}}_s (e^{2\pi i(1-a)})$ are simple and only at the negative even integers if and only if $1/4 \le a \le 1/2$. 

Moreover, the asymptotic behavior of real zeros of $Z(s,a)$ and $P(s,a)$ are studied when $0 < a < 1/4$. In addition, the complex zeros of these zeta functions are also discussed when $0 <a <1/2$ is rational or transcendental. 
\end{abstract}

\section{Introduction and statement of main results}

\subsection{Real zeros of the Hurwitz and periodic zeta functions}
Let $s = \sigma +it$, where $\sigma,t \in {\mathbb{R}}$. Then the Riemann zeta function is defined by
$$
\zeta (s) := \sum_{n=1}^\infty \frac{1}{n^s}, \qquad \sigma >1.
$$
It is well known that $\zeta (s)$ is continued meromorphically and has a simple pole at $s=1$ with residue $1$. The Riemann zeta function $\zeta (s)$ satisfies the functional equation 
\begin{equation}\label{eq:RZfe}
\zeta(1-s) = \frac{2\Gamma (s)}{(2\pi )^s} \cos \Bigl( \frac{\pi s}{2} \Bigr) \zeta(s)
\end{equation}
(see for instance \cite[(2.1.8)]{Tit}). Moreover, all real zeros of $\zeta(s)$ are simple and only at the negative even integers. The Riemann hypothesis is a conjecture that the Riemann zeta function $\zeta(s)$ has its non-real zeros only on the so-called  critical line $\sigma =1/2$. 

Next we define the Hurwitz zeta function $\zeta (s,a)$ by 
$$
\zeta (s,a) := \sum_{n=0}^\infty \frac{1}{(n+a)^s}, \qquad \sigma >1, \quad 0<a \le 1.
$$
The Hurwitz zeta function $\zeta (s,a)$ is a meromorphic function with a simple pole at $s=1$ whose residue is $1$ (see for example \cite[Chapter 12]{Apo}). Note that all real zeros of $\zeta (s,1/2)$ are simple and only at the non-positive even integers. 

Define the periodic zeta function ${\rm{Li}}_s (e^{2\pi ia})$ by  
$$
{\rm{Li}}_s (e^{2\pi ia}) := \sum_{n=1}^\infty \frac{e^{2\pi ina}}{n^s}, \qquad \sigma >1, \quad 0<a \le 1
$$
(see for instance \cite[Exercise 12.2]{Apo}). It should be mentioned that ${\rm{Li}}_s (e^{2\pi ia})$ with $0<a<1$ is analytically continuable to the whole complex plane since the Dirichlet series of ${\rm{Li}}_s (e^{2\pi ia})$ converges uniformly in each compact subset of the half-plane $\sigma >0$ when $0<a<1$  (see for instance \cite[p.~20]{LauGa}). 

For real zeros of $\zeta (s,a)$ and ${\rm{Li}}_s (z)$ with $|z| \le 1$, we can refer to \cite[Section 1.1]{NaC}. Recently, Matsusaka \cite[Theorem 1.3]{Mts} showed that for integers $N \le -1$, the Hurwitz zeta function $\zeta (s,a)$ has real zeros in the interval $(-N-1,-N)$ if and only if $B_{N+1}(a) B_{N+2}(a)$ $<0$, where $B_N(a)$ is the $N$-th Bernoulli polynomial. It should be noted that the cases $N=-1$ and $N=0$ are proved by the author in \cite[Theorem 1.2]{NaC} and \cite[Theorem 1.2]{NJMAA}, respectively. 

By numerical calculation, we can see that the difference between two successive real zeros of $\zeta (s,a)$ is not $2$ in general, namely, the difference depends on $0<a<1$. On the contrary, it is well-known that the gap between consecutive real zeros of $\zeta (s)$ is $2$ by the functional equation (\ref{eq:RZfe}) and the fact that $\zeta (s)$ does not vanish on the real line if $\sigma \ge 0$. Moreover, the difference between two successive real zeros of Dirichlet $L$-function $L(s,\chi)$ is $2$ if we assume the non-existence of any real zero of $L(s,\chi)$ in the interval $(0,1)$. As an unconditional result, Conrey and Soundararajan \cite{ConSou} proved that a positive proportion of real primitive Dirichlet characters $\chi$, the associated Dirichlet $L$-function $L(s,\chi)$ does not vanish on the positive real axis. 

\subsection{Main results}
For $0 <a \le 1/2$, let
\begin{equation*}
\begin{split}
&Z(s,a) := \zeta (s,a) + \zeta (s,1-a), \qquad P(s,a) := {\rm{Li}}_s (e^{2\pi ia}) + {\rm{Li}}_s (e^{2\pi i(1-a)})\\
&Y(s,a) := \zeta (s,a) - \zeta (s,1-a), \qquad 
O(s,a):= -i \bigl( {\rm{Li}}_s (e^{2\pi ia}) - {\rm{Li}}_s (e^{2\pi i(1-a)}) \bigr) ,\\
&X(s,a) := Y(s,a) + O(s,a) = 
\zeta (s,a) - \zeta (s,1-a) -i \bigl( {\rm{Li}}_s (e^{2\pi ia}) - {\rm{Li}}_s (e^{2\pi i(1-a)}) \bigr).
\end{split}
\end{equation*}
We name $Z(s,a)$ bilateral Hurwitz zeta function and call $Y(s,a)$ bilateral Hurwitz zeta star function.  In addition, we call $P(s,a)$ bilateral periodic zeta function and name $O(s,a)$ bilateral periodic zeta star function.  Without the name of these function, they have already appeared in an old paper Fine \cite[Equations (9), (11), (12) and (15)]{Fine} and some famous text books such as Karatsuba $\&$ Voronin \cite[Equations (9) and (12) in \S1.4]{KaVo} and Koblitz \cite[Problems 3 (c) and 5 (d) in \S 2.4]{Kob}. Furthermore, it is showed in \cite[Section 5]{NaIn} that $Z(s,|a|)$ appears as the spectral density of some stationary self-similar Gaussian distributions. Note that the functional equation and zeros on the critical line of $Z(s,a)+P(s,a)$ have been already discussed in \cite[Theorems 1.1 and 1.2]{NPFE}.

In the present paper, we prove the following four main theorems which describe real zeros of the zeta-functions ${\rm{Li}}_s (e^{2\pi ia})$, $Y(s,a)$, $O(s,a)$, $X(s,a)$ $Z(s,a)$ and $P(s,a)$. It should be emphasised that the gap between consecutive real zeros of $Y(s,a)$, $O(s,a)$, $X(s,a)$ with $0 < a < 1/2$, and  $Z(s,a)$ and $P(s,a)$ with $1/4 \le a \le 1/2$ is always $2$, namely, the the gaps do not depend on $a$ just like the Riemann zeta function.

\begin{theorem}\label{th:li1}
When $0 < a < 1/2$ or $1/2 < a < 1$, the periodic zeta function ${\rm{Li}}_s (e^{2\pi ia})$ does not vanish on the real line.
\end{theorem}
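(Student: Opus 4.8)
The plan is to reduce to the range $0<a<1/2$ and then to split the real line into the three ranges $\sigma>0$, $\sigma=0$ and $\sigma<0$. For real $s=\sigma$ one has $\overline{{\rm{Li}}_\sigma(e^{2\pi ia})}={\rm{Li}}_\sigma(e^{-2\pi ia})={\rm{Li}}_\sigma(e^{2\pi i(1-a)})$, so the value at $1-a$ is the complex conjugate of the value at $a$; hence the case $1/2<a<1$ follows at once from the case $0<a<1/2$, which I treat from now on. Writing, for $\sigma>1$, ${\rm{Li}}_\sigma(e^{2\pi ia})=\sum_{n\ge1}\cos(2\pi na)n^{-\sigma}+i\sum_{n\ge1}\sin(2\pi na)n^{-\sigma}$ and continuing analytically, it suffices to show that this complex number is nonzero for every real $\sigma$.

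For $\sigma>0$ I would prove that the imaginary part is strictly positive, which already forbids a zero. Using the Mellin transform $n^{-\sigma}=\Gamma(\sigma)^{-1}\int_0^\infty x^{\sigma-1}e^{-nx}\,dx$ and summing the resulting geometric series gives, for $\sigma>0$ and $0<a<1/2$,
\begin{equation*}
\sum_{n=1}^\infty\frac{\sin(2\pi na)}{n^\sigma}=\frac{1}{\Gamma(\sigma)}\int_0^\infty x^{\sigma-1}\,\frac{e^{-x}\sin(2\pi a)}{1-2e^{-x}\cos(2\pi a)+e^{-2x}}\,dx.
\end{equation*}
Since $0<2\pi a<\pi$ we have $\sin(2\pi a)>0$, while the denominator equals $|1-e^{-x}e^{2\pi ia}|^2>0$ for $x>0$; thus the integrand is positive, the integral is strictly positive, and ${\rm{Li}}_\sigma(e^{2\pi ia})\ne0$ for all $\sigma>0$.

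The remaining two cases are quick. At $\sigma=0$ the analytic continuation gives ${\rm{Li}}_0(z)=z/(1-z)$ with $z=e^{2\pi ia}\notin\{0,1\}$, so ${\rm{Li}}_0(e^{2\pi ia})\ne0$. For $\sigma<0$ I would invoke the functional equation for the periodic zeta function (Hurwitz's formula),
\begin{equation*}
{\rm{Li}}_\sigma(e^{2\pi ia})=\frac{\Gamma(1-\sigma)}{(2\pi)^{1-\sigma}}\Bigl(e^{i\phi}\,\zeta(1-\sigma,a)+e^{-i\phi}\,\zeta(1-\sigma,1-a)\Bigr),\qquad \phi:=\frac{\pi(1-\sigma)}{2}.
\end{equation*}
For $\sigma<0$ the prefactor is positive and $A:=\zeta(1-\sigma,a)$, $B:=\zeta(1-\sigma,1-a)$ are convergent series of positive terms, so the bracket equals $(A+B)\cos\phi+i(A-B)\sin\phi$. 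A zero would force $\cos\phi=0$, whence $\sin\phi=\pm1$ and therefore $A=B$; but $a<1-a$ gives $(n+a)^{\sigma-1}>(n+1-a)^{\sigma-1}$ for every $n\ge0$, so $A>B$ strictly, a contradiction. Hence ${\rm{Li}}_\sigma(e^{2\pi ia})\ne0$ for $\sigma<0$ as well.

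The main obstacle is the range $\sigma>0$: there the Dirichlet series is not dominated by its first term (for $a>1/4$ the coefficients $\sin(2\pi na)$ already change sign), so one cannot rule out a zero by a crude triangle inequality and must instead exhibit genuine positivity. The integral representation above is the device that does this, reducing the whole half-line $\sigma>0$ to the elementary positivity of a single integrand. The only other delicate point is the junction $\sigma=0$, where the integral representation (valid only for $\sigma>0$) and the functional-equation argument (in which $\zeta(1-\sigma,a)$ acquires its pole at $1-\sigma=1$) both degenerate; this is sidestepped by the explicit value ${\rm{Li}}_0(z)=z/(1-z)$.
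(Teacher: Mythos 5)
Your proposal is correct and follows essentially the same route as the paper: positivity of $\Im\,{\rm{Li}}_\sigma(e^{2\pi ia})$ for $\sigma>0$ via the same integral representation, an explicit nonzero value at $\sigma=0$, and for $\sigma<0$ the functional equation combined with the strict inequality $\zeta(1-\sigma,a)>\zeta(1-\sigma,1-a)>0$. The only (cosmetic) differences are that you dispatch $1/2<a<1$ by conjugation symmetry rather than rerunning the argument, and at $s=0$ you quote ${\rm{Li}}_0(e^{2\pi ia})=e^{2\pi ia}/(1-e^{2\pi ia})$ directly instead of the paper's L'Hospital computation via the functional equation (your value $-\tfrac12+\tfrac{i}{2}\cot\pi a$ is in fact the correct one, and has nonzero real part just the same).
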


The theorem above asserts the non-existence of any real zero of ${\rm{Li}}_s (e^{2\pi ia})$. On the contrary, the theorem below can be regarded as an analogue of the well-know fact that the all real zeros of any Dirichlet $L$-function $L(s,\chi)$ with $\Re(s) < 0$ are only at the negative odd integers if $\chi$ is a primitive character with $\chi(-1) = -1$ (see Section 1.1). 

\begin{theorem}\label{th:OY1}
All real zeros of the functions $Y(s,a)$, $O(s,a)$ and $X(s,a)$ with $0 < a < 1/2$ are simple and only at the negative odd integers. 
\end{theorem}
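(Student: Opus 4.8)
The plan is to reduce the statement to two ingredients: a pair of Riemann-type functional equations linking $Y,O,X$ at $s$ and $1-s$, together with the strict positivity of all three functions on the positive real axis. First I would record the functional equations. Starting from Hurwitz's formula
\[
\zeta(1-s,a)=\frac{\Gamma(s)}{(2\pi)^s}\Bigl(e^{-\pi i s/2}{\rm{Li}}_s(e^{2\pi ia})+e^{\pi i s/2}{\rm{Li}}_s(e^{2\pi i(1-a)})\Bigr)
\]
and its companion ${\rm{Li}}_{1-s}(e^{2\pi ia})=\tfrac{\Gamma(s)}{(2\pi)^s}\bigl(e^{\pi i s/2}\zeta(s,a)+e^{-\pi i s/2}\zeta(s,1-a)\bigr)$, which one obtains by solving the two Hurwitz formulas for the arguments $a$ and $1-a$ as a linear system and invoking the reflection formula $\Gamma(s)\Gamma(1-s)\sin(\pi s)=\pi$, I would subtract the $a$- and $(1-a)$-versions. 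The subtraction cancels the symmetric part and leaves only a factor $-2i\sin(\pi s/2)$, producing
\[
Y(1-s,a)=\frac{2\Gamma(s)}{(2\pi)^s}\sin\Bigl(\frac{\pi s}{2}\Bigr)O(s,a),\qquad O(1-s,a)=\frac{2\Gamma(s)}{(2\pi)^s}\sin\Bigl(\frac{\pi s}{2}\Bigr)Y(s,a),
\]
and, adding these, $X(1-s,a)=\tfrac{2\Gamma(s)}{(2\pi)^s}\sin(\pi s/2)X(s,a)$, a self-dual equation of odd type. Since $Y,O,X$ are entire (Section 3.3), these hold as identities on $\mathbb{R}$.

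Next I would show that $Y(s,a),O(s,a),X(s,a)$ are strictly positive for real $s>0$. For $Y$ this is immediate: writing $Y(s,a)=\sum_{n\ge 0}\bigl((n+a)^{-s}-(n+1-a)^{-s}\bigr)$, each summand is positive for $s>0$ because $0<a<1/2$ forces $n+a<n+1-a$, and the series converges for $s>0$ by the mean value theorem. The essential point is the positivity of $O$. Here I would use the Mellin representation: for $s>1$,
\[
\tfrac12\Gamma(s)O(s,a)=\int_0^\infty x^{s-1}\sum_{n=1}^\infty \sin(2\pi na)e^{-nx}\,dx=\int_0^\infty x^{s-1}\,\frac{e^{-x}\sin(2\pi a)}{1-2e^{-x}\cos(2\pi a)+e^{-2x}}\,dx,
\]
where the closed form follows from $\sum_{n\ge1}\sin(2\pi na)e^{-nx}=\Im\frac{e^{2\pi ia-x}}{1-e^{2\pi ia-x}}=\frac{\Im z}{|1-z|^2}$ with $z=e^{2\pi ia}e^{-x}$. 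The denominator equals $|1-e^{2\pi ia}e^{-x}|^2>0$ for $x>0$, the numerator is positive because $0<2\pi a<\pi$, and the integrand is $O(x^{s-1})$ near $0$ and exponentially small at $\infty$; hence the integral converges and is analytic for $s>0$, so by analytic continuation the identity persists there. As $\Gamma(s)>0$, this forces $O(s,a)>0$ for every real $s>0$, whence $X=Y+O>0$ as well.

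Finally I would read off the zeros. On $(0,\infty)$ all three functions are positive, so there are no zeros there; in particular the (removable) point $s=1$ is covered. For an argument less than $1$, write it as $1-s$ with $s>0$ and apply the functional equations: since $\Gamma(s)>0$ and $O(s,a),Y(s,a),X(s,a)>0$, the real zeros of $Y(1-s,a)$, $O(1-s,a)$, $X(1-s,a)$ coincide exactly with the zeros of $\sin(\pi s/2)$ for $s>0$, namely $s\in\{2,4,6,\dots\}$, i.e. argument $1-s\in\{-1,-3,-5,\dots\}$. Because $(0,\infty)\cup(-\infty,1)=\mathbb{R}$, this exhausts the real line, so the real zeros are precisely the negative odd integers. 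Simplicity is automatic: $\sin(\pi s/2)$ has simple zeros at the positive even integers while the remaining factors $\Gamma(s)$ and $O(s,a)$ (respectively $Y,X$) are nonzero there, so each of $Y(1-s,a),O(1-s,a),X(1-s,a)$ vanishes to first order.

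The main obstacle is the positivity of $O(s,a)$ on $(0,\infty)$. In contrast to $Y$, the Dirichlet series of $O$ has terms $\sin(2\pi na)$ of varying sign, so termwise positivity fails and one genuinely needs the closed form of the generating kernel $\sum_{n\ge1}\sin(2\pi na)e^{-nx}$ and the observation that, for $0<a<1/2$, it is a nonnegative Poisson-type kernel. Once that positivity is in hand, everything else is routine bookkeeping with the functional equations.
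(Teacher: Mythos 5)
Your proposal is correct and essentially reproduces the paper's own proof: the same pair of functional equations $Y(1-s,a)=\frac{2\Gamma(s)}{(2\pi)^s}\sin\bigl(\frac{\pi s}{2}\bigr)O(s,a)$ and $O(1-s,a)=\frac{2\Gamma(s)}{(2\pi)^s}\sin\bigl(\frac{\pi s}{2}\bigr)Y(s,a)$ (plus their sum for $X$), termwise positivity for $Y(\sigma,a)$ on $\sigma>0$, and positivity of $O(\sigma,a)$ via a positive kernel, after which the zeros and their simplicity are read off from $\sin(\pi s/2)$. Even your key kernel is the paper's: $\frac{e^{-x}\sin 2\pi a}{1-2e^{-x}\cos 2\pi a+e^{-2x}}$ equals $\Im\bigl(e^{2\pi ia}/(e^{x}-e^{2\pi ia})\bigr)=\frac{e^{x}\sin 2\pi a}{(e^{x}-\cos 2\pi a)^{2}+\sin^{2}2\pi a}$ upon multiplying numerator and denominator by $e^{2x}$, the paper reaching it through $O(\sigma,a)=2\,\Im\,{\rm{Li}}_{\sigma}(e^{2\pi ia})$ and the integral representation of ${\rm{Li}}_{\sigma}$.
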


As analogues of the real zeros of the Riemann zeta function $\zeta (s)$ or Dirichlet $L$-functions $L(s,\chi)$ with primitive characters satisfying the condition $\chi(-1) = 1$, we have the following two statements. 

\begin{theorem}\label{th:m1}
All real zeros of the function $Z(s,a)$ are simple and only at the non-positive even integers if and only if $1/4 \le a \le 1/2$.
\end{theorem}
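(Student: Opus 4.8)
The plan is to reduce the study of the real zeros of $Z(s,a)$ on the half-line $\sigma<1$ to a sign analysis of $P(s,a)$ via a functional equation. First I would record Hurwitz's formula
\begin{equation*}
\zeta(1-s,a)=\frac{\Gamma(s)}{(2\pi)^s}\Bigl(e^{-\pi is/2}{\rm{Li}}_s(e^{2\pi ia})+e^{\pi is/2}{\rm{Li}}_s(e^{-2\pi ia})\Bigr),
\end{equation*}
add to it the same identity with $a$ replaced by $1-a$, and use $e^{-2\pi ia}=e^{2\pi i(1-a)}$ to obtain
\begin{equation*}
Z(1-s,a)=\frac{2\Gamma(s)}{(2\pi)^s}\cos\Bigl(\frac{\pi s}{2}\Bigr)P(s,a).
\end{equation*}
Since $P(s,a)=2\sum_{n=1}^\infty n^{-s}\cos(2\pi na)$ is real for real $s$ and $\Gamma(s)>0$ for $s>0$, this has exactly the shape of (\ref{eq:RZfe}). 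Reading it with $s>0$, a number $1-s<1$ is a zero of $Z(\cdot,a)$ precisely when $\cos(\pi s/2)=0$ or $P(s,a)=0$. The first case gives $s\in\{1,3,5,\dots\}$, i.e.\ $1-s\in\{0,-2,-4,\dots\}$, the non-positive even integers; the second produces an extra zero $1-u$ for each $u>0$ with $P(u,a)=0$. Since $Z(s,a)>0$ for real $s>1$ (a sum of positive Dirichlet series), the whole theorem reduces to the single claim that $P(u,a)\neq 0$ for all $u>0$ if and only if $1/4\le a\le 1/2$.

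For the sign of $P(u,a)$ I would use the Mellin representation, valid for $u>0$,
\begin{equation*}
P(u,a)=\frac{2}{\Gamma(u)}\int_0^\infty t^{u-1}\frac{c\,e^{t}-1}{(e^{t}-c)^2+1-c^2}\,dt,\qquad c:=\cos(2\pi a),
\end{equation*}
obtained by combining the two Lerch integrals for ${\rm{Li}}_u(e^{\pm 2\pi ia})$. The denominator equals $(e^t-c)^2+\sin^2(2\pi a)>0$, so the sign of the integrand is that of $c\,e^{t}-1$. When $1/4\le a\le 1/2$ one has $c=\cos(2\pi a)\le 0$, hence $c\,e^{t}-1<0$ for all $t\ge 0$ and therefore $P(u,a)<0$ for every $u>0$. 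Feeding this into the functional equation gives $Z(s,a)<0$ on $(0,1)$ and shows that on $\sigma<1$ the only real zeros arise from the (simple) zeros of the cosine factor; because $P$ does not vanish at the odd integers either, these zeros are simple and are exactly the non-positive even integers, where one also checks $Z(-2k,a)=0$ directly from the reflection $B_{2k+1}(a)+B_{2k+1}(1-a)=0$. This is the \textquotedblleft if\textquotedblright\ direction.

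For the converse I would examine the two ends of the $u$-range. The series gives $\lim_{u\to\infty}P(u,a)=2\cos(2\pi a)$, while the Mellin integrand tends to $-\tfrac12$ as $t\to0$, so that $\lim_{u\to 0^+}P(u,a)=-1$ for every $0<a<1/2$ (the value $-1$ is confirmed by the closed form $P(u,1/2)=-2(1-2^{1-u})\zeta(u)$). When $0<a<1/4$ we have $2\cos(2\pi a)>0$, so $P(u,a)$ changes sign on $(0,\infty)$ and, by the intermediate value theorem, vanishes at some $u_0>0$. Then $s_0=1-u_0$ is a real zero of $Z(s,a)$ which is either not a non-positive even integer or, should $u_0$ happen to be an odd integer, a zero of order at least two; in either case the conclusion fails, which gives the \textquotedblleft only if\textquotedblright\ direction and completes the equivalence.

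The main obstacle is the sign analysis of $P(u,a)$: the entire dichotomy is governed by the sign of $c=\cos(2\pi a)$, and the threshold $a=1/4$ (where $c=0$) is exactly where $c\,e^{t}-1$ ceases to be of one sign. The delicate points are therefore the justification of the Mellin representation down to the abscissa $u=0$ (the Dirichlet series for $P(u,a)$ converges only conditionally for $0<u\le 1$) and the boundary evaluation $\lim_{u\to 0^+}P(u,a)=-1$, which underpins the existence of the extra zero when $0<a<1/4$.
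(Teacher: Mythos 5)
Your proof is correct, and its ``if'' half is essentially the paper's own argument: the paper's Proposition \ref{pro:ksz1} treats the case $1/4\le a\le 1/2$ by the same functional equation $Z(1-s,a)=\frac{2\Gamma(s)}{(2\pi)^s}\cos\bigl(\frac{\pi s}{2}\bigr)P(s,a)$ (Lemma \ref{lem:1}) combined with exactly your sign computation, namely that $\Re\bigl(e^{2\pi ia}/(e^x-e^{2\pi ia})\bigr)=\bigl(e^x\cos 2\pi a-1\bigr)/\bigl((e^x-\cos 2\pi a)^2+\sin^2 2\pi a\bigr)$ is negative for all $x>0$ once $\cos 2\pi a\le 0$, whence $P(\sigma,a)<0$ for $\sigma>0$ and the only real zeros of $Z$ are the simple ones forced by the cosine factor. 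Where you genuinely diverge is the ``only if'' direction. The paper splits $0<a<1/4$ into three cases: for $0<a<1/6$ it uses a monotonicity lemma in the style of Endo--Suzuki (Lemma \ref{lem:3}: $\alpha^{-\sigma}\Gamma(\sigma)P(\sigma,a)$ is strictly increasing, where $\alpha=-\log\cos 2\pi a$) together with the boundary values of Lemma \ref{lem:2} to produce a zero of $Z$ in $(0,1)$; for $a=1/6$ it uses the factorization $Z(s,1/6)=(2^s-1)(3^s-1)\zeta(s)$ of (\ref{eq:z1/6}), giving a double zero at $s=0$; and for $1/6<a<1/4$ it shows $P$ has a zero $\beta_P(a)\in(1,\infty)$ (Proposition \ref{pro:a1}), hence $Z(1-\beta_P(a),a)=0$ with $1-\beta_P(a)<0$. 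Your single intermediate-value argument --- $P(u,a)\to-1$ as $u\to 0^+$ and $P(u,a)\to 2\cos 2\pi a>0$ as $u\to\infty$, so $P$ vanishes at some $u_0>0$, and $1-u_0$ is then either a real zero of $Z$ off the non-positive even integers or a non-simple zero at one of them --- replaces all three cases at once and is leaner for the bare equivalence. What the paper's longer route buys is uniqueness, monotonicity in $a$, and asymptotics of the exceptional zero, which it needs elsewhere (Propositions \ref{pro:z1}, \ref{pro:p1}, \ref{cor:Z} and \ref{cor:p}); none of that is required for Theorem \ref{th:m1} itself. One point to tighten: your evaluation $\lim_{u\to 0^+}P(u,a)=-1$ rests on a delta-family property of $t^{u-1}/\Gamma(u)$, which does need the justification you flag (subtract $g(0)e^{-t}$ from the integrand, note the remainder is $O(t)$ near $0$ and decays exponentially, so its Mellin integral stays bounded while $1/\Gamma(u)\to 0$); alternatively you can take $P(0,a)=-1$ directly from the paper's Lemma \ref{lem:2}, where it follows at once from the functional equation (\ref{eq:zfe1p}) and the pole of $Z(s,a)$ at $s=1$.
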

\begin{theorem}\label{th:mt1}
All real zeros of the function $P(s,a)$ are simple and only at the negative even integers if and only if $1/4 \le a \le 1/2$.
\end{theorem}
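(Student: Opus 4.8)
The plan is to reduce everything to Theorem~\ref{th:m1} by means of a functional equation linking $P(s,a)$ and $Z(s,a)$. First I would invoke Hurwitz's formula (see \cite{Apo})
\[
\zeta(1-s,a) = \frac{\Gamma(s)}{(2\pi)^s}\Bigl( e^{-\pi i s/2}\,{\rm{Li}}_s(e^{2\pi ia}) + e^{\pi i s/2}\,{\rm{Li}}_s(e^{-2\pi ia})\Bigr),
\]
valid for $0<a\le 1$ after analytic continuation, and apply it to both $\zeta(1-s,a)$ and $\zeta(1-s,1-a)$. Since $e^{2\pi i(1-a)}=e^{-2\pi ia}$, adding the two expansions makes the coefficients of ${\rm{Li}}_s(e^{2\pi ia})$ and ${\rm{Li}}_s(e^{-2\pi ia})$ coincide and collapse to $e^{\pi i s/2}+e^{-\pi i s/2}=2\cos(\pi s/2)$, giving
\[
Z(1-s,a)=\chi(s)\,P(s,a),\qquad \chi(s):=\frac{2\Gamma(s)}{(2\pi)^s}\cos\Bigl(\frac{\pi s}{2}\Bigr),
\]
where $\chi$ is exactly the factor in the Riemann functional equation $\zeta(1-s)=\chi(s)\zeta(s)$. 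Replacing $s$ by $1-s$ and using $\chi(s)\chi(1-s)=1$ yields the dual relation $P(1-s,a)=\chi(s)Z(s,a)$, so that the roles of $P$ and $Z$ become interchangeable.

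Next I would record the behaviour of $\chi$ on the real axis: it has simple zeros exactly at the positive odd integers, simple poles exactly at $s=0$ and the negative even integers, and is finite and non-vanishing elsewhere. With this in hand the core of the argument is the equivalence
\[
\bigl[\text{all real zeros of } P(\cdot,a)\text{ are simple and only at the negative even integers}\bigr]
\]
\[
\Longleftrightarrow\ \bigl[\text{all real zeros of } Z(\cdot,a)\text{ are simple and only at the non-positive even integers}\bigr],
\]
which I would prove by reading off $P(s,a)=Z(1-s,a)/\chi(s)$, and symmetrically $Z(s,a)=P(1-s,a)/\chi(s)$, case by case. For non-integer real $s$ the factor $\chi(s)$ is finite and non-zero, so $P(s,a)=0$ iff $Z(1-s,a)=0$; as $1-s$ is then non-integer this gives a bijection of non-integer real zeros under $s\mapsto 1-s$. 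At the integers one balances the simple zeros and poles of $\chi$ against those of $Z(1-s,a)$: at a negative even integer $\chi$ has a simple pole while $Z(1-s,a)$ is finite and non-zero (its argument $1-s$ being a positive odd integer $\ge 3$, hence not a zero of $Z$ under the assumption), producing a simple zero of $P$; at the positive odd integers and at $s=0$ the zero/zero and pole/pole cancellations leave $P$ finite and non-zero; the remaining integers give finite, non-zero quotients likewise.

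The delicate point, and the main obstacle, is the asymmetry at $s=0$, which explains why $P$ omits the zero that $Z$ carries there. Indeed $Z(\cdot,a)$ has a pole at $1$ of residue $2$, so near $s=0$ one has $Z(1-s,a)\sim -2/s$ while $\chi(s)\sim 2/s$; hence $P(0,a)=\lim_{s\to 0}Z(1-s,a)/\chi(s)=-1\ne 0$, whereas in the dual reading $P(1,a)$ is finite (as $P$ is entire) and $\chi(0)$ is a pole, forcing $Z(0,a)=0$. Getting this bookkeeping right for every residue class, together with the simplicity of each surviving zero, is where the care lies; the non-integer correspondence is immediate. Finally, chaining the displayed equivalence with Theorem~\ref{th:m1} shows that all real zeros of $P(\cdot,a)$ are simple and only at the negative even integers if and only if $1/4\le a\le 1/2$, which is the assertion.
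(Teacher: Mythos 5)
Your proof is correct, but it takes a genuinely different route from the paper's. The paper never passes through Theorem \ref{th:m1}: it handles $1/4 \le a \le 1/2$ directly (Proposition \ref{pro:ksp1}), using the integral representation of $P(\sigma,a)$ to show $P(\sigma,a)<0$ for all $\sigma>0$ in that range, and then reading the real zeros off the functional equation $P(1-s,a)=\frac{2\Gamma(s)}{(2\pi)^s}\cos(\pi s/2)Z(s,a)$ together with $Z(\sigma,a)>0$ for $\sigma>1$; and it settles $0<a<1/4$ by actually exhibiting the exceptional zero $\beta_P(a)\in(0,\infty)$ (Proposition \ref{cor:p}), whose existence and uniqueness rest on the monotonicity of $\alpha^{-\sigma}\Gamma(\sigma)P(\sigma,a)$ (Lemma \ref{lem:3}, after Endo--Suzuki). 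You instead prove the equivalence ``real zeros of $P(\cdot,a)$ simple and exactly at the negative even integers $\Leftrightarrow$ real zeros of $Z(\cdot,a)$ simple and exactly at the non-positive even integers'' by order bookkeeping in the pair of meromorphic identities $Z(1-s,a)=\chi(s)P(s,a)$ and $P(1-s,a)=\chi(s)Z(s,a)$ with $\chi(s)=2\Gamma(s)(2\pi)^{-s}\cos(\pi s/2)$ (these are exactly the paper's Lemma \ref{lem:1}, and your Hurwitz-formula derivation and the relation $\chi(s)\chi(1-s)=1$ are both sound), and then quote Theorem \ref{th:m1}. Your local analysis is right: $\chi$ has simple zeros exactly at the positive odd integers and simple poles exactly at $s=0,-2,-4,\dots$, and the simple pole of $Z(\cdot,a)$ at $s=1$ with residue $2$ is precisely what makes $P(0,a)=-1\neq 0$ while forcing $Z(0,a)=0$, so the two zero sets differ exactly at the point $0$. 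Two remarks. First, your reduction is non-circular only because the paper's proof of Theorem \ref{th:m1} (via Propositions \ref{pro:c1}, \ref{pro:a2} and \ref{pro:ksz1}) nowhere invokes Theorem \ref{th:mt1}; since the two statements look symmetric, this should be said explicitly. Second, as to what each approach buys: yours is more economical and makes the $Z$--$P$ duality structural, reducing everything to one local computation at each integer; the paper's direct sign analysis of $P$ is heavier but yields strictly more, namely the location and asymptotics of $\beta_P(a)$ (Propositions \ref{pro:p1}, \ref{cor:p} and \eqref{eq:asypinf1}), which a purely formal transfer of Theorem \ref{th:m1} cannot produce.
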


Next we consider the zeta function $Z(s,a)$ and $P(s,a)$ with $0< a <1/4$. The asymptotic behavior of real zeros $\beta_Z (a)$ and $\beta_P (a)$ below are given in (3) of Propositions \ref{pro:c1}, \ref{pro:a1} and \ref{pro:a2} (see also Propositions \ref{cor:Z} and \ref{cor:p}). 
\begin{proposition}\label{pro:z1}
Assume $0 < a < 1/4$. Then the function $Z(s,a)$ have real zeros at the non-positive integers and another real zero $\beta_Z (a)$ in the open interval $(-\infty, 1)$ which does not appear in Theorem \ref{th:m1}. 
\end{proposition}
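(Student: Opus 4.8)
The plan is to read the non-positive even integer zeros off the special values of $Z$, and to manufacture the additional zero $\beta_Z(a)$ from the functional equation tying $Z(s,a)$ to $P(1-s,a)$; the sign of the decisive factor there is controlled by $\cos 2\pi a$, and its positivity is exactly the condition $a<1/4$, which is what makes the threshold match Theorem \ref{th:m1}.

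First I would record the integer data. Since $\zeta(-n,a)=-B_{n+1}(a)/(n+1)$ and $B_{n+1}(1-a)=(-1)^{n+1}B_{n+1}(a)$ (see \cite{Apo}), one finds $Z(-n,a)=-(B_{n+1}(a)+B_{n+1}(1-a))/(n+1)$, which vanishes precisely when $n$ is even. Thus $Z(s,a)$ vanishes at every non-positive even integer, while $Z(-2m-1,a)=-B_{2m+2}(a)/(m+1)$ at the negative odd integers; the former are the simple zeros already described in Theorem \ref{th:m1}.

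Next I would use Hurwitz's formula (\cite{Apo}) to derive, by adding the versions for $a$ and $1-a$ and then replacing $s$ by $1-s$, the functional equation
\[
Z(s,a)=\frac{2\Gamma(1-s)}{(2\pi)^{1-s}}\,\sin (\pi s/2)\,P(1-s,a),
\]
which is in any case established in Section 3 on the way to Theorems \ref{th:m1} and \ref{th:mt1}. Writing $g(s):=2\Gamma(1-s)/(2\pi)^{1-s}$, note that $g(s)>0$ for $s<1$. Hence, for $s<1$, the real zeros of $Z(s,a)$ away from the even integers (where $\sin (\pi s/2)$ already vanishes) are exactly the points $s=1-\sigma$ at which $P(\sigma,a)=0$ with $\sigma>0$. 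It therefore suffices to produce a positive real zero of $P$. I would use two explicit facts: first, ${\rm{Li}}_0(z)=z/(1-z)$ gives $P(0,a)={\rm{Li}}_0(e^{2\pi ia})+{\rm{Li}}_0(e^{-2\pi ia})=-1<0$; second, the Dirichlet series $P(\sigma,a)=2\sum_{n=1}^\infty \cos(2\pi na)/n^\sigma$, valid for $\sigma>0$, has leading term $2\cos 2\pi a$, so $P(\sigma,a)\to 2\cos 2\pi a$ as $\sigma\to+\infty$. For $0<a<1/4$ we have $2\cos 2\pi a>0$, so $P$ changes sign on $(0,\infty)$ and, being entire hence continuous, possesses a zero $\sigma_0\in(0,\infty)$ by the intermediate value theorem. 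Setting $\beta_Z(a):=1-\sigma_0<1$ then yields a real zero of $Z$ in $(-\infty,1)$.

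It remains to confirm that $\beta_Z(a)$ genuinely \emph{does not appear} in Theorem \ref{th:m1}. If $1-\sigma_0$ is not a non-positive even integer it is a new zero and we are done. If instead $\sigma_0$ is a positive odd integer $2k+1$, then $s=-2k$ is a common zero of $\sin (\pi s/2)$ and of $P(1-s,a)$, so the displayed formula shows $Z$ has a zero of order at least two there, contradicting the simplicity asserted in Theorem \ref{th:m1}; again $\beta_Z(a)$ is not among its simple even-integer zeros. To locate $\beta_Z(a)$ more precisely I would subdivide $0<a<1/4$ using $P(1,a)=-2\log(2\sin\pi a)$ (positive for $0<a<1/6$, negative for $1/6<a<1/2$) and $P(2,a)=2\pi^2 B_2(a)$: for $0<a<1/6$ this provides $\sigma_0\in(0,1)$ and hence $\beta_Z(a)\in(0,1)$, while for $1/6<a<1/4$ it provides $\sigma_0>1$ and hence $\beta_Z(a)<0$. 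The transitional value $a=1/6$ is exactly where $\sigma_0=1$ and $\beta_Z(a)=0$; there $Z'(0,a)=\zeta'(0,a)+\zeta'(0,1-a)=-\log(2\sin\pi a)=0$ (by Lerch's formula and the reflection formula), so $s=0$ is a double zero of $Z$. This case division is the one reflected in Propositions \ref{pro:c1}, \ref{pro:a1} and \ref{pro:a2}, from which the asymptotics (for instance $\beta_Z(a)\sim 1-2a$ as $a\to0^+$ and $\beta_Z(a)\to-\infty$ as $a\to 1/4^-$) are then extracted. The main obstacle is precisely this bookkeeping near $a=1/6$, where the extra real zero passes through, and momentarily merges with, the even-integer zero at $s=0$.
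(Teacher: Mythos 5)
Your argument is correct, and it takes a genuinely lighter route than the paper does. The paper proves this proposition by simply combining Propositions \ref{pro:c1}, \ref{pro:a1}, \ref{pro:a2}, \ref{cor:Z} and \ref{cor:p}, whose common engine is Lemma \ref{lem:3} (the Endo--Suzuki-type strict monotonicity of $(-\log\cos 2\pi a)^{-\sigma}\Gamma(\sigma)P(\sigma,a)$ on $\sigma>0$), the boundary values of Lemma \ref{lem:2}, and a three-way case split at $a=1/6$; that machinery yields existence, uniqueness and simplicity of the zero $\beta_P(a)$ of $P(\sigma,a)$ in $(0,\infty)$, plus the monotone dependence on $a$ and the asymptotics used elsewhere. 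You bypass all of that: for the trivial zeros you use the Bernoulli values $\zeta(-n,a)=-B_{n+1}(a)/(n+1)$ and the reflection $B_{n+1}(1-a)=(-1)^{n+1}B_{n+1}(a)$, where the paper reads them off the vanishing of $\cos(\pi s/2)$ in the functional equation; and for the extra zero you need only the intermediate value theorem between $P(0,a)=-1<0$ and $\lim_{\sigma\to\infty}P(\sigma,a)=2\cos 2\pi a>0$ (the paper misprints this limit as $\cos 2\pi a$; only its sign matters), transferred through the same functional equation $Z(s,a)=\frac{2\Gamma(1-s)}{(2\pi)^{1-s}}\sin(\pi s/2)\,P(1-s,a)$ with $\Gamma(1-s)>0$ for $s<1$. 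What your route buys is brevity and uniformity: existence requires no monotonicity lemma and no case distinction at $a=1/6$. What it gives up is uniqueness and simplicity of $\beta_Z(a)$ --- not asserted in this proposition, but supplied by the paper's machinery and needed for Theorems \ref{th:m1}, \ref{th:mt1} and Propositions \ref{cor:Z}, \ref{cor:p}. Your disposal of ``does not appear in Theorem \ref{th:m1}'' (if $1-\sigma_0=-2k$, then $\sin(\pi s/2)$ and $P(1-s,a)$ vanish simultaneously, forcing a double, hence non-simple, zero) is precisely the observation made at the end of the paper's proof of Proposition \ref{pro:a2}.

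One caveat concerns the statement rather than your proof: read literally, ``real zeros at the non-positive integers'' is false, since $Z(-1,a)=-B_2(a)\neq 0$ for every $a\in(0,1/4)$ except $a=(3-\sqrt{3})/6$. Your computation, which gives vanishing exactly at the non-positive even integers (apart from accidental zeros of $B_{n+1}(a)$ at odd arguments), is what Proposition \ref{pro:a2} actually supports, so your reading of the statement is the correct one.
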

\begin{proposition}\label{pro:p1}
Suppose $0 < a < 1/4$. Then the function $P(s,a)$ have precisely one simple real zero $\beta_P (a)$ in $(0, \infty)$. Furthermore, all real zeros in $(-\infty,0)$ of the function $P(s,a)$ are simple and only at the negative even integers. 
\end{proposition}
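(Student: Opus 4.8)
The plan is to treat the real zeros of $P(s,a)$ on the two half-lines $(-\infty,0)$ and $(0,\infty)$ by completely different tools. For $s<0$ the decisive object is the functional equation obtained by adding Hurwitz's formula for $\zeta(1-s,a)$ to its companion with $a$ replaced by $1-a$; since $e^{2\pi i(1-a)}=e^{-2\pi ia}$, this yields
\begin{equation*}
Z(1-s,a)=\frac{2\Gamma(s)}{(2\pi)^s}\cos\Bigl(\frac{\pi s}{2}\Bigr)P(s,a),
\end{equation*}
the exact analogue of \eqref{eq:RZfe} with $Z$ and $P$ playing the roles of $\zeta(1-s)$ and $\zeta(s)$. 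For real $s<0$ one has $1-s>1$, hence $Z(1-s,a)=\zeta(1-s,a)+\zeta(1-s,1-a)>0$ because every term of both Dirichlet series is positive; since $(2\pi)^s>0$ as well, the zeros and sign of $P(s,a)$ on $(-\infty,0)$ are governed solely by $1/\bigl(\Gamma(s)\cos(\pi s/2)\bigr)$.

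I would then read off the negative-axis zeros from that factor. The function $1/\Gamma(s)$ has simple zeros exactly at the non-positive integers, while $1/\cos(\pi s/2)$ has simple poles exactly at the odd integers. At a negative odd integer the simple zero of $1/\Gamma(s)$ cancels the simple pole of $1/\cos(\pi s/2)$, leaving a finite nonzero value, so $P$ does not vanish there; at a non-integer point of $(-\infty,0)$ all factors are finite and nonzero; and at a negative even integer $1/\Gamma(s)$ has a simple zero while $\cos(\pi s/2)\neq0$, so $P$ vanishes, simply, because $1/\Gamma$ has a simple zero and the remaining factors are smooth and nonzero. This establishes the second assertion.

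On $(0,\infty)$ the functional equation no longer forces a definite sign, since $Z(1-s,a)$ is then evaluated at arguments $<1$, precisely where the stray zero $\beta_Z(a)$ of Proposition \ref{pro:z1} lives, so I would argue directly. Using ${\rm{Li}}_s(z)=\frac{1}{\Gamma(s)}\int_0^\infty t^{s-1}(e^t/z-1)^{-1}\,dt$ for $\sigma>0$ and combining the two conjugate terms gives, for real $\sigma>0$,
\begin{equation*}
\frac{\Gamma(\sigma)}{2}P(\sigma,a)=\int_0^\infty t^{\sigma-1}K(t)\,dt,\qquad K(t):=\frac{e^{-t}\bigl(\cos(2\pi a)-e^{-t}\bigr)}{1-2e^{-t}\cos(2\pi a)+e^{-2t}}.
\end{equation*}
The denominator equals $|1-e^{2\pi ia}e^{-t}|^2>0$, so $K(t)$ has the sign of $\cos(2\pi a)-e^{-t}$: for $0<a<1/4$ one has $0<\cos(2\pi a)<1$, whence $K(t)<0$ on $(0,t_0)$ and $K(t)>0$ on $(t_0,\infty)$ with $t_0:=\log\bigl(1/\cos(2\pi a)\bigr)>0$. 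Existence of a zero then follows from $P(0,a)=-1<0$ (computed from ${\rm{Li}}_0(z)=z/(1-z)$) and $P(\sigma,a)\to2\cos(2\pi a)>0$ as $\sigma\to\infty$.

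The heart of the argument, and the step I expect to be most delicate, is uniqueness and simplicity of the positive zero, which I would extract from the single sign change of $K$ by a variation-diminishing estimate. Writing $G(\sigma):=\int_0^\infty t^{\sigma-1}K(t)\,dt$ and assuming $G(\sigma_0)=0$, I factor $t^{\sigma-1}=(t/t_0)^{\sigma-\sigma_0}\,t_0^{\sigma-\sigma_0}\,t^{\sigma_0-1}$; since for $\sigma>\sigma_0$ the weight $(t/t_0)^{\sigma-\sigma_0}$ exceeds $1$ exactly where $K>0$ and is below $1$ exactly where $K<0$ (and conversely for $\sigma<\sigma_0$), one obtains $t_0^{-(\sigma-\sigma_0)}G(\sigma)>G(\sigma_0)=0$ for $\sigma>\sigma_0$ and $<0$ for $\sigma<\sigma_0$. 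Hence $G$, and therefore $P(\cdot,a)=2G/\Gamma$, changes sign from $-$ to $+$ exactly once, so $\beta_P(a)$ is unique. Simplicity comes from the same idea: differentiating under the integral gives $G'(\sigma_0)=\int_0^\infty t^{\sigma_0-1}\log(t/t_0)\,K(t)\,dt$, and $\log(t/t_0)$ has the same sign as $K(t)$ on each of $(0,t_0)$ and $(t_0,\infty)$, so the integrand is nonnegative and positive on a set of positive measure, forcing $G'(\sigma_0)>0$. The points needing care are the validity of the integral representation and of differentiation under the integral for $\sigma>0$ (the integrand is $O(t^{\sigma-1})$ near $0$ and $O(t^{\sigma-1}e^{-t})$ at infinity, so both are routine), together with the elementary evaluations $P(0,a)=-1$ and the limit as $\sigma\to\infty$.
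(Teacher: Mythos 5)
Your proposal is correct, and its two pillars are the same as the paper's: on $(-\infty,0)$ the functional equation combined with positivity of $Z$ at arguments $>1$, and on $(0,\infty)$ the single sign change of the kernel at $t_0=-\log\cos 2\pi a$ amplified by the monotone weight $(t/t_0)^{\sigma-\sigma_0}$ --- your variation-diminishing step is precisely the paper's Lemma \ref{lem:3} (that $t_0^{-\sigma}\Gamma(\sigma)P(\sigma,a)$ is strictly increasing on $\sigma>0$), which the paper adapts from \cite{ES}. The differences are organizational, and mostly in your favor for this particular statement. On the negative axis the paper reads zeros off the companion equation $P(1-s,a)=\frac{2\Gamma(s)}{(2\pi)^s}\cos(\pi s/2)\,Z(s,a)$ at $\sigma>1$, where every factor is finite and $\Gamma, Z>0$, so the zeros come from $\cos(\pi s/2)$ alone; your version, which divides $Z(1-s,a)=\frac{2\Gamma(s)}{(2\pi)^s}\cos(\pi s/2)\,P(s,a)$ by $\Gamma(s)\cos(\pi s/2)$, needs the zero--pole cancellation bookkeeping at negative odd integers that the paper's direction avoids, though you carry it out correctly. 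On the positive axis the paper subdivides at $\sigma=1$ and distinguishes $0<a<1/6$, $a=1/6$ (via the factorization $P(s,1/6)=(2^{1-s}-1)(3^{1-s}-1)\zeta(s)$) and $1/6<a<1/4$, using the sign of $P(1,a)=-2\log(2\sin\pi a)$ as intermediate value (Propositions \ref{pro:c1} and \ref{pro:a1}); since Proposition \ref{pro:p1} only asserts one zero somewhere in $(0,\infty)$, your single application of the monotonicity argument with endpoints $P(0,a)=-1$ and $\lim_{\sigma\to\infty}P(\sigma,a)=2\cos 2\pi a$ makes that case analysis unnecessary --- the paper needs it anyway because it also locates $\beta_P(a)$ relative to $\sigma=1$ and derives its asymptotics. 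Finally, your simplicity argument via $G'(\sigma_0)=\int_0^\infty t^{\sigma_0-1}\log(t/t_0)K(t)\,dt>0$ is more explicit than the paper's appeal to strict monotonicity of the weighted function, which taken literally would not exclude an odd-order multiple zero; the positive-derivative computation (implicit in the proof of Lemma \ref{lem:3}) is the right way to close that point.
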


At the end of this subsection, we discuss the complex or non-real zeros of the zeta functions $Y(s,a)$, $O(s,a)$, $X(s,a)$ $Z(s,a)$ and $P(s,a)$. 
For $k=2,3,4$, let  $\chi_{-k}$ be the real and non-principal Dirichlet character mod $k$. Then we define the Dirichlet $L$-function by $L(s,\chi_{-k}) := \sum_{n=1}^\infty \chi_{-k} (n) n^{-s}$ for $k=3,4,6$ and $\Re (s) >1$. These functions can be continued to holomorphic functions on the whole complex plane by analytic continuation. For $k=3,4$, the Generalized Riemann hypothesis for $L(s,\chi_{-k})$ are conjectures that $L(s,\chi_{-k})$ have their non-real zeros only on the critical line $\sigma =1/2$. Furthermore, the Generalized Riemann hypothesis for $L(s,\chi_{-6})$ states that all non-real zeros of $L(s,\chi_{-6})$ are only on the vertical lines $\sigma = 1/2$ and $\sigma = 0$ (see Section 3.1). For $a= 1/6$, $1/4$, $1/3$ and $1/2$, we have the following. 

\begin{proposition}\label{pro:1}
Suppose that $a= 1/6$, $1/4$, $1/3$ or $1/2$. Then the Riemann hypothesis is true if and only if\\
$\bullet \,$ all non-real zeros of $Z(s,a)$ are on the vertical lines $\sigma = 1/2$ and $\sigma = 0$, or\\
$\bullet \,$ all non-real zeros of $P(s,a)$ are on the vertical lines $\sigma = 1/2$ and $\sigma = 1$.
\end{proposition}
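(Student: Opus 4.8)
The plan is to reduce both assertions to the ordinary Riemann hypothesis by factoring $Z(s,a)$ and $P(s,a)$ through $\zeta(s)$. The starting observation is that, for the four values in question, we have $a=r/q$ with $q\in\{2,3,4,6\}$, and both $Z(s,a)$ and $P(s,a)$ are invariant under $a\mapsto 1-a$; consequently their Dirichlet coefficients are even functions of the residue of $n$ modulo $q$, so they only detect the even Dirichlet characters modulo $q$. Since $\varphi(q)=2$ for $q\in\{3,4,6\}$ (and $\varphi(2)=1$), the sole even character modulo $q$ is the principal one $\chi_0$. Using orthogonality of characters together with $L(s,\chi_0)=\zeta(s)\prod_{p\mid q}(1-p^{-s})$, I would rewrite each residue-class sum appearing in $Z(s,a)$ and $P(s,a)$ and collapse everything to $\zeta(s)$ multiplied by a finite polynomial in $2^{-s}$ and $3^{-s}$.

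Carrying out the computation, I expect the factorizations
\begin{align*}
Z(s,\tfrac12)&=2(2^{s}-1)\zeta(s), & Z(s,\tfrac14)&=4^{s}(1-2^{-s})\zeta(s),\\
Z(s,\tfrac13)&=3^{s}(1-3^{-s})\zeta(s), & Z(s,\tfrac16)&=6^{s}(1-2^{-s})(1-3^{-s})\zeta(s),
\end{align*}
and
\begin{align*}
P(s,\tfrac12)&=-2(1-2^{1-s})\zeta(s), & P(s,\tfrac14)&=2^{1-s}(2^{1-s}-1)\zeta(s),\\
P(s,\tfrac13)&=(3^{1-s}-1)\zeta(s), & P(s,\tfrac16)&=(1-2^{1-s})(1-3^{1-s})\zeta(s).
\end{align*}
Writing $Z(s,a)=E_Z(s)\zeta(s)$ and $P(s,a)=E_P(s)\zeta(s)$, each factor $E_Z$ is a product of terms of the form $1-p^{-s}$ (or $2^s-1$), which vanish precisely where $p^{s}=1$, i.e. on the line $\sigma=0$; each factor $E_P$ is a product of terms $1-p^{1-s}$, which vanish precisely where $p^{1-s}=1$, i.e. on $\sigma=1$. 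The least immediate identity is $P(s,1/6)$: here I would start from $P(s,1/6)=2\sum_{n\ge1}\cos(\pi n/3)\,n^{-s}$, split the period-$6$ coefficient pattern over residue classes, and re-collect using the already-obtained expressions for $Z(s,1/6)$, $Z(s,1/3)$ and $\zeta(s,1/2)$, after which the Dirichlet polynomial simplifies to $(1-2^{1-s})(1-3^{1-s})$.

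With these factorizations the equivalence becomes essentially formal. Because $E_Z$ and $E_P$ are entire, every non-real zero of $\zeta(s)$ is a non-real zero of $Z(s,a)$ and of $P(s,a)$; conversely any non-real zero of $Z(s,a)$ (resp. $P(s,a)$) is a zero of $\zeta(s)$ or of the corresponding elementary factor. I would then invoke the classical facts that $\zeta(s)$ has no zeros on $\sigma=1$ and none on $\sigma=0$, so that all its non-real zeros lie in the open strip $0<\sigma<1$. If the Riemann hypothesis holds, these non-real zeros lie on $\sigma=1/2$, while the zeros of $E_Z$ lie on $\sigma=0$ and those of $E_P$ on $\sigma=1$; hence all non-real zeros of $Z(s,a)$ lie on $\sigma\in\{0,1/2\}$ and all non-real zeros of $P(s,a)$ lie on $\sigma\in\{1/2,1\}$. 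Conversely, if all non-real zeros of $Z(s,a)$ lie on $\sigma\in\{0,1/2\}$, then the non-real zeros of $\zeta$, being among them and lying strictly inside $0<\sigma<1$, cannot sit on $\sigma=0$ and therefore lie on $\sigma=1/2$, which is the Riemann hypothesis; the identical argument applies to $P(s,a)$ with $\sigma=1$ in place of $\sigma=0$. Since each of the two bulleted statements is individually equivalent to the Riemann hypothesis, so is their disjunction.

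The only genuinely computational part is the derivation of the eight factorizations, and the hard part will be the periodic-zeta identity at $a=1/6$, where the residue classes that are not coprime to $6$ must be combined carefully so that the separate $2^{-s}$- and $3^{-s}$-contributions fuse into the clean product $(1-2^{1-s})(1-3^{1-s})$. I would also verify that each elementary factor vanishes only on its stated vertical line, the sole real zeros being at $s=0$ for $E_Z$ and at $s=1$ for $E_P$ (the latter merely cancelling the pole of $\zeta$); both are real and hence irrelevant to the statement about non-real zeros, so no spurious non-real zeros are introduced off the lines $\sigma\in\{0,1/2\}$ and $\sigma\in\{1/2,1\}$.
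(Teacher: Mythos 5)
Your proposal is correct and follows essentially the same route as the paper: both derive the factorizations $Z(s,a)=E_Z(s)\zeta(s)$ and $P(s,a)=E_P(s)\zeta(s)$ (your expressions agree with the paper's $Z(s,1/4)=2^s(2^s-1)\zeta(s)$, $Z(s,1/6)=(2^s-1)(3^s-1)\zeta(s)$, $P(s,1/6)=(2^{1-s}-1)(3^{1-s}-1)\zeta(s)$, etc., obtained there by splitting $\sum_n n^{-s}$ over residue classes), and then the equivalence with the Riemann hypothesis follows formally since the elementary factors vanish only on $\sigma=0$ (resp.\ $\sigma=1$) while $\zeta(s)$ has no zeros on those lines. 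Your write-up in fact spells out this last equivalence step more explicitly than the paper does.
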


\begin{proposition}\label{pro:yox1}
Assume that $a= 1/6$, $1/4$ or $1/3$.\\
$\bullet \,$ For each $k=3,4$, all non-real zeros of $Y(s,1/k)$ and $O(s,1/k)$ are on the vertical line $\sigma = 1/2$ if and only if the Generalized Riemann hypothesis for $L(s,\chi_{-k})$ is true. \\
$\bullet \,$ All non-real zeros of $Y(s,1/6)$ and $O(1-s,1/6)$ are on the vertical lines $\sigma = 1/2$ and $\sigma = 0$ if and only if the Generalized Riemann hypothesis for $L(s,\chi_{-6})$ is true. \\
$\bullet \,$ For each $k=3,4,6$, all non-real zeros of $X(s,1/k)$ are on the vertical line $\sigma = 1/2$ \\if and only if the Generalized Riemann hypothesis for $L(s,\chi_{-k})$ is true. 
\end{proposition}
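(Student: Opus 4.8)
The plan is to reduce everything at $a=1/k$ to the Dirichlet $L$-functions $L(s,\chi_{-k})$ (and, for $k=6$, to the primitive $L(s,\chi_{-3})$), after which each assertion becomes a question about the zeros of an explicit elementary factor. First I would record the identities obtained by grouping the defining series by residues modulo $k$. Writing $L(s,\chi_{-k})=k^{-s}\sum_{r=1}^{k}\chi_{-k}(r)\zeta(s,r/k)$ and using that for $k=3,4,6$ the character $\chi_{-k}$ is supported exactly on $\{1,k-1\}$ with $\chi_{-k}(k-1)=-1$ gives
\begin{equation*}
Y(s,1/k)=k^{s}L(s,\chi_{-k})\qquad(k=3,4,6).
\end{equation*}
Likewise, from $O(s,a)=2\sum_{n\ge 1}\sin(2\pi na)\,n^{-s}$, together with $\sin(2\pi n/4)=\chi_{-4}(n)$ and $\sin(2\pi n/3)=\tfrac{\sqrt3}{2}\chi_{-3}(n)$, I obtain $O(s,1/4)=2L(s,\chi_{-4})$ and $O(s,1/3)=\sqrt3\,L(s,\chi_{-3})$. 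For $k=6$ the sine coefficients do not form a character, but splitting them over $n$ odd and even and using the imprimitivity relation $L(s,\chi_{-6})=(1+2^{-s})L(s,\chi_{-3})$ (as $\chi_{-6}$ is induced by $\chi_{-3}$) yields $O(s,1/6)=\sqrt3\,(1+2^{1-s})L(s,\chi_{-3})$. Consequently $X(s,1/k)=(k^{s}+k^{1/2})L(s,\chi_{-k})$ for $k=3,4$, while
\begin{equation*}
X(s,1/6)=G(s)\,L(s,\chi_{-3}),\qquad G(s):=6^{s}+3^{s}+\sqrt3+2\sqrt3\,2^{-s}.
\end{equation*}

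Given these identities, the cases $k=3,4$ are immediate. The factors $k^{s}$, $2$ and $\sqrt3$ never vanish, so $Y(s,1/k)$ and $O(s,1/k)$ have exactly the zeros of $L(s,\chi_{-k})$; hence their non-real zeros lie on $\sigma=1/2$ precisely when the Generalized Riemann hypothesis for $L(s,\chi_{-k})$ holds. For $X(s,1/k)$ I note that $k^{s}+k^{1/2}=0$ forces $k^{\sigma}=k^{1/2}$, i.e.\ $\sigma=1/2$, with non-zero imaginary part; thus these extra zeros all sit on the critical line and contribute nothing off it, giving the stated equivalence. The same bookkeeping disposes of the $k=6$ cases of $Y$ and $O$: the function $Y(s,1/6)=6^{s}L(s,\chi_{-6})$ has exactly the zeros of $L(s,\chi_{-6})$, whose non-real zeros lie on $\sigma=1/2$ and $\sigma=0$ exactly under the Generalized Riemann hypothesis for $L(s,\chi_{-6})$; and since $O(1-s,1/6)=\sqrt3\,(1+2^{s})L(1-s,\chi_{-3})$, with $1+2^{s}$ vanishing only on $\sigma=0$ and the non-real zeros of $L(1-s,\chi_{-3})$ lying on $\sigma=1/2$ iff those of $L(s,\chi_{-3})$ do, the reflection $s\mapsto 1-s$ is precisely what sends the elementary zeros from $\sigma=1$ to $\sigma=0$ and matches the lines $\sigma=1/2,0$.

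The one substantial point, and the step I expect to be the main obstacle, is the $k=6$ case of $X$, which needs the unconditional lemma that every zero of $G(s)=6^{s}+3^{s}+\sqrt3+2\sqrt3\,2^{-s}$ lies on $\sigma=1/2$. Here $G$ does not factor into binomials, so I would argue by size. A direct computation gives the functional equation $G(1-s)=3^{1/2-s}G(s)$, so the zeros are symmetric about $\sigma=1/2$ and it suffices to prove $G(s)\neq0$ for $\sigma>1/2$. Split $G=T_{1}+T_{2}$ with $T_{1}=3^{s}(2^{s}+1)$ and $T_{2}=\sqrt3\,(1+2^{1-s})$, which are exactly the $Y$- and $O$-contributions. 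Writing $p=2^{\sigma}$, $q=2^{1-\sigma}$ and $\gamma=\cos(t\log2)$ one finds
\begin{equation*}
|2^{s}+1|^{2}-|2^{1-s}+1|^{2}=(p-q)(p+q+2\gamma),
\end{equation*}
and since $pq=2$, for $\sigma>1/2$ one has $p>q$ and $p+q\ge 2\sqrt{pq}=2\sqrt2>2\ge-2\gamma$, so the right-hand side is positive. Combined with $3^{\sigma}>3^{1/2}$ this gives $|T_{1}|=3^{\sigma}|2^{s}+1|>3^{1/2}|1+2^{1-s}|=|T_{2}|$, whence $G=T_{1}+T_{2}$ cannot vanish for $\sigma>1/2$; the functional equation then excludes zeros with $\sigma<1/2$, proving the lemma.

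Finally I would assemble the equivalences. In every case the zeros of the relevant primitive $L$-function form a subset of the zeros of the function under study, so if all its non-real zeros lie on the prescribed line(s) then so do those of the $L$-function, yielding the Generalized Riemann hypothesis; conversely, under that hypothesis the $L$-factor contributes only critical-line non-real zeros, the real trivial zeros at the negative odd integers are excluded since they are real (consistent with Theorem \ref{th:OY1}), and the elementary factors contribute only the controlled zeros located above. This closes both directions for all of $k=3,4,6$.
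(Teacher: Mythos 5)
Your proposal is correct and follows essentially the same route as the paper: the same identities $Y(s,1/k)=k^{s}L(s,\chi_{-k})$, $O(s,1/4)=2L(s,\chi_{-4})$, $O(s,1/3)=\sqrt{3}\,L(s,\chi_{-3})$, $O(s,1/6)=\sqrt{3}\,(1+2^{1-s})L(s,\chi_{-3})$, and the same key lemma that $6^{s}+3^{s}+\sqrt{3}\,(1+2^{1-s})$ vanishes only on $\sigma=1/2$, established by comparing the moduli of the two pieces $3^{s}(1+2^{s})$ and $\sqrt{3}\,(1+2^{1-s})$. The only difference is in execution of that step: where the paper proves $|1+2^{1-s}|<|1+2^{s}|$ for $\sigma>1/2$ by a case analysis on the sign of $\cos(t\log 2)$ plus a factorization, you get it in one stroke from $(p-q)(p+q+2\gamma)>0$ with $pq=2$ and AM--GM, and you invoke the symmetry $G(1-s)=3^{1/2-s}G(s)$ in place of the paper's use of $g(1-s)=1/g(s)$.
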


On the contrary, when $a \ne 1/2, 1/3,/ 1/4, 1/6$, we have the following for non-periodic complex zeros of the functions ${\rm{Li}}_s (e^{2\pi ia})$, $O(s,a)$, $Y(s,a)$ and $X(s,a)$, $Z(s,a)$ and $P(s,a)$.
\begin{proposition}\label{pro:zero1}
Let $a \in {\mathbb{Q}} \cap (0,1/2) \setminus \{1/6, 1/4, 1/3\}$. Then for any $\delta >0$, there exist positive constants $C_a^\flat (\delta)$ and $C_a^\sharp (\delta)$ such that the function $P(s,a)$ has more than $C_a^\flat (\delta) T$ and less than $C_a^\sharp (\delta)T$ complex zeros in the rectangle $1 < \sigma < 1+\delta$ and $0 < t < T$ if $T$ is sufficiently large. Moreover, for any $1/2 < \sigma_1 < \sigma_2 <1$, there are positive numbers $C_a^\flat (\sigma_1,\sigma_2)$ and $C_a^\sharp (\sigma_1,\sigma_2)$ such that the function $P(s,a)$ has more than $C_a^\flat (\sigma_1,\sigma_2) T$ and less than $C_a^\sharp (\sigma_1,\sigma_2) T$ non-trivial zeros in the rectangle $\sigma_1 < \sigma < \sigma_2$ and $0 < t < T$ when $T$ is sufficiently large. 

Furthermore, the function ${\rm{Li}}_s (e^{2\pi ia})$ with $a \in {\mathbb{Q}} \cap (0,1/2)$, the functions $O(s,a)$ and $X(s,a)$ with $a \in {\mathbb{Q}} \cap (0,1/2) \setminus \{1/6, 1/4, 1/3\}$, and the functions $Z(s,a)$ and $Y(s,a)$ with $a \in (0,1/2) \setminus \overline{\mathbb{Q}}$ or $a \in {\mathbb{Q}} \cap (0,1/2) \setminus \{1/6, 1/4, 1/3\}$ have the property mentioned above. 
\end{proposition}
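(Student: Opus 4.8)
The plan is to reduce each assertion to a zero-density theorem for Dirichlet series and to treat rational and transcendental parameters separately. For rational $a=p/q$, collecting the Dirichlet coefficients of ${\rm{Li}}_s(e^{2\pi ia})$, and hence of $P(s,a)$, $O(s,a)$, $X(s,a)$, $Z(s,a)$ and $Y(s,a)$, into residue classes modulo $q$ expresses every one of these functions as a finite linear combination $\sum_j F_j(s)L(s,\chi_j)$ of Dirichlet $L$-functions attached to pairwise distinct primitive characters $\chi_j$, with each $F_j$ a Dirichlet polynomial; this is the same decomposition already used in Section~3 to obtain Propositions \ref{pro:1} and \ref{pro:yox1}. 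Since the coefficient sequences are periodic, this places us in the setting of the theorem of Saias and Weingartner on zeros of Dirichlet series with periodic coefficients: if such a series is not of the form (Dirichlet polynomial)$\times$(single Dirichlet $L$-function), then it has $\gg T$ zeros in every strip $\sigma_1<\sigma<\sigma_2$ with $1/2\le\sigma_1<\sigma_2$, which covers both $1<\sigma<1+\delta$ and $1/2<\sigma_1<\sigma_2<1$.

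The first genuine step is the non-degeneracy check: I must verify that for $a\in{\mathbb{Q}}\cap(0,1/2)\setminus\{1/6,1/4,1/3\}$ the function at hand is \emph{not} of the form (Dirichlet polynomial)$\times$(single Dirichlet $L$-function), this being the hypothesis under which the zero-density theorem applies. The excluded values $1/6,1/4,1/3$ (and the trivial $a=1/2$) are exactly those for which the residue-class decomposition collapses to one primitive character, as the identities behind Propositions \ref{pro:1} and \ref{pro:yox1} show; for every other rational $a$ a direct computation with that decomposition exhibits two distinct primitive $L$-functions occurring with nonzero Dirichlet-polynomial coefficients, and the multiplicative independence of $L$-functions of distinct primitive characters (distinct Euler factors) prevents any reduction to a single $L$-function times a Dirichlet polynomial. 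For ${\rm{Li}}_s(e^{2\pi ia})$ the same computation gives non-degeneracy for \emph{every} rational $a\in(0,1/2)$, which is why no values are excluded in that case, whereas the symmetrisations defining $O,X,Z,Y,P$ cancel or reinforce characters so as to produce precisely the stated exclusion set.

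With non-degeneracy established, the lower bounds $\gg T$ in both regions (supplying the constants $C_a^\flat$) follow in the rational case from the Saias--Weingartner theorem, the zeros with $1<\sigma<1+\delta$ being the Davenport--Heilbronn effect caused by the absence of an Euler product and captured by the same methods. For the transcendental parameters arising for $Z(s,a)$ and $Y(s,a)$ with $a\in(0,1/2)\setminus\overline{\mathbb{Q}}$, periodicity fails and I would instead appeal to the value-distribution (universality) theory of the Hurwitz zeta function: for transcendental $a$ the function $\zeta(s,a)$ is universal, and, granting the corresponding joint statement for the pair $\zeta(s,a),\zeta(s,1-a)$, the Voronin mechanism forces $Z(s,a)=\zeta(s,a)+\zeta(s,1-a)$ and $Y(s,a)=\zeta(s,a)-\zeta(s,1-a)$ to have $\gg T$ zeros in every strip $1/2<\sigma_1<\sigma<\sigma_2$ and in $1<\sigma<1+\delta$. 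The matching upper bounds $\ll T$ (the constants $C_a^\sharp$) hold for all the functions at once by Littlewood's lemma applied to a thin rectangle: the boundary integrals of $\log|f|$ are $O(T)$ by the standard second-moment estimates on vertical lines with $\sigma>1/2$, while the variation of the argument contributes $O(\log T)$, so the number of zeros with real part in any fixed subinterval of $(1/2,\infty)$ is $O(T)$.

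The main difficulty is the package of lower bounds $\gg T$. Two points demand the most work. First, the non-degeneracy computation must be carried out uniformly over the infinitely many admissible rational $a$, and one must confirm that $\{1/6,1/4,1/3,1/2\}$ is \emph{exactly} the degenerate set separately for each of $O,X,Z,Y,P$; the answer genuinely depends on the function, since the symmetrisation $a\leftrightarrow 1-a$ selects different parities of characters. Second, the transcendental case rests on joint universality for $\zeta(s,a)$ and $\zeta(s,1-a)$ in spite of the relation $a+(1-a)=1$, and I would have to check that this single rational relation does not obstruct the independence needed to place the image of the relevant vertical segments on both sides of the loci $\zeta(s,a)=\mp\zeta(s,1-a)$; only then does the argument principle deliver the full count $\gg T$ rather than the mere existence of zeros.
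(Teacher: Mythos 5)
Your treatment of the rational case follows the paper's own route: decompose $P(s,a)$, $Z(s,a)$, etc.\ via \eqref{eq:qq} into a combination of Dirichlet $L$-functions, observe that at least two distinct primitive $L$-functions occur with nonzero coefficients precisely because $\varphi (q) \le 2$ if and only if $q=1,2,3,4,6$ (this is the paper's non-degeneracy criterion, and your parity analysis of which characters survive each symmetrisation is correct), and your upper bound via Littlewood's lemma and second moments is the Bohr--Landau argument the paper uses. However, there is a misstep in which theorem covers which region. The Saias--Weingartner result \cite{SW} concerns the half-plane of absolute convergence: it produces $\gg T$ zeros with $1 < \sigma < 1+\delta$, not in strips $1/2 < \sigma_1 < \sigma < \sigma_2 < 1$. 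Your claim that it ``covers both'' regions is false as stated; for the strip inside the critical strip the paper invokes \cite[Theorem 2]{KaczorowskiKulas}, a genuinely different theorem about off-line zeros of linear combinations of $L$-functions. As written, your rational case only delivers the zeros with $\sigma > 1$.

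The more serious gap is in the transcendental case for $Z(s,a)$ and $Y(s,a)$. You propose to obtain the zeros in $1 < \sigma < 1+\delta$ from the ``Voronin mechanism''. Universality theorems --- including Mishou's joint universality for the pair $\zeta (s,a)$, $\zeta (s,1-a)$ with transcendental $a$ \cite[Theorem 5]{MZ}, which is exactly the reference that closes the point you left open as ``granting the corresponding joint statement'' --- hold only in the strip $1/2 < \sigma < 1$ and cannot produce zeros where the Dirichlet series converges absolutely. The paper handles $\sigma > 1$ by a different mechanism: for transcendental $a$ the set $\{ \log (n+a) : n \ge 0 \} \cup \{ \log (n+1-a) : n \ge 0 \}$ is linearly independent over ${\mathbb{Q}}$, and the Davenport--Heilbronn method (Kronecker's theorem used to rotate the terms of the absolutely convergent series) then yields $\gg T$ zeros in $1 < \sigma < 1+\delta$. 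This linear-independence step is the missing idea in your proposal; universality is not a substitute for it, so your argument fails in that region.
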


This paper is organized as follows. 
In Section 2, we give some remarks on linear relations between Dirichlet $L$-functions and zeta functions composed by $\zeta (s,a)$ and ${\rm{Li}}_s (e^{2\pi ia})$ and their real zeros. 
We give proofs of Proposition \ref{pro:yox1}, Theorems \ref{th:li1} and \ref{th:OY1} in Section 3. We prove Theorems \ref{th:m1} and \ref{th:mt1}, Propositions \ref{pro:z1}, \ref{pro:p1}, \ref{pro:1} and \ref{pro:zero1} in Section 4. 

\section{Linear relations and remarks on real zeros}
\subsection{Linear relations}
First, we show that each of the functions $Z(s,r/q)$, $P(s,r/q)$, $Y(s,r/q)$ and $O(s,r/q)$ essentially can be expressed as a linear combination of Dirichlet $L$-functions if $0 < r < q$ are relatively prime integers.
\begin{proposition}
Let $0 < r < q$ be relatively prime integers, $\varphi $ be the Euler totient function. Put $\overline{\chi}_\pm (r) := (1 \pm \chi(-1)) \overline{\chi} (r)$ and $G_r^\pm (\chi) := (1 \pm \chi(-1)) \sum_{n=1}^q e^{2\pi irn/q} \chi (n)$. Then one has
\begin{equation}\label{eq:qq}
\begin{split}
&Z(s,r/q) = \frac{q^s}{\varphi (q)} \! \sum_{\chi \!\!\!\! \mod q} \!
\overline{\chi}_+ (r)  L(s,\chi), \qquad
P(s,r/q) = \frac{1}{\varphi (q)} \! \sum_{\chi \!\!\!\! \mod q} \! G_r^+(\overline{\chi})  L(s,\chi).
\end{split}
\end{equation}
Furthermore, for $0 < 2r < q$, it holds that
\begin{equation}\label{eq:yoq}
\begin{split}
Y(s,r/q) = \frac{q^s}{\varphi (q)} \! \sum_{\chi \!\!\!\! \mod q} \! \overline{\chi}_- (r)  L(s,\chi), \qquad
O(s,r/q) = \frac{-i}{\varphi (q)} \! \sum_{\chi \!\!\!\! \mod q} \! G_r^- (\overline{\chi})  L(s,\chi).
\end{split}
\end{equation}
\end{proposition}

\begin{proof}
When $0 < r < q$ are relatively prime integers, we have 
\begin{equation}\label{eq:k1}
\begin{split}
&\zeta (s,r/q) = \sum_{n=0}^\infty \frac{1}{(n+r/q)^s} = \sum_{n=0}^\infty \frac{q^s}{(r+qn)^s} =
\frac{q^s}{\varphi (q)} \sum_{\chi \!\!\!\! \mod q} \overline{\chi} (r) L(s,\chi), \\
&{\rm{Li}}_s  (e^{2\pi i r/q}) = q^{-s} \sum_{n=1}^q e^{2\pi irn/q} \zeta (s,n/q) =
\frac{1}{\varphi (q)} \sum_{\chi \!\!\!\! \mod q} G(r,\overline{\chi}) L(s,\chi).
\end{split}
\end{equation}
Note that $\varphi (q) \le 2$ if and only if $q=1,2,3,4,6$, which plays important role in the proof of Proposition \ref{pro:zero1}. By (\ref{eq:k1}) and the definitions of the functions $Z(s,r/q)$, $P(s,r/q)$, $Y(s,r/q)$ and $O(s,r/q)$, we have (\ref{eq:qq}) and (\ref{eq:yoq}). 
\end{proof}

Next we show that the Dirichlet $L$-function essentially can be written by a linear combination of the functions $Z(s,r/q)$, $P(s,r/q)$, $Y(s,r/q)$ or $O(s,r/q)$. 
\begin{proposition}
Let $\chi$ be a primitive Dirichlet character, $G(\chi)$ be the Gauss sum associated to $\chi$, and $0 < r < q$ be relatively prime integers. If $\chi(-1) =1$, we have
\begin{equation}\label{eq:qqgyakuZP}
L (s,\chi) =  \frac{1}{2q^ s} \sum_{r=1}^q \chi (r) Z(s,r/q) = \frac{1}{2G(\overline{\chi})} \sum_{r=1}^q \overline{\chi} (r) P(s,r/q).
\end{equation}
When $\chi(-1) =-1$, one has
\begin{equation}\label{eq:qqgyakuYO}
L (s,\chi) =  \frac{1}{2q^ s} \sum_{r=1}^q \chi (r) Y(s,r/q) = \frac{i}{2G(\overline{\chi})} \sum_{r=1}^q \overline{\chi} (r) O(s,r/q).
\end{equation}
\end{proposition}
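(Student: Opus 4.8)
The plan is to prove all four identities by one template: multiply the definition of the composed zeta function by a suitable Dirichlet character, sum over the residue $r$ from $1$ to $q$, and then collapse the resulting double sum either by orthogonality of characters (for $Z$ and $Y$) or by the separability of Gauss sums (for $P$ and $O$). I would dispatch the pair $Z,Y$ first, since it rests only on elementary orthogonality, and handle the pair $P,O$ afterwards, since it requires the Gauss-sum machinery and the primitivity of $\chi$.

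For the $Z,Y$ identities I would start from the exact relation $\sum_{r=1}^q \chi(r)\,\zeta(s,r/q) = q^s L(s,\chi)$, which is just the inverse of the first line of \eqref{eq:k1}: writing $\zeta(s,r/q) = q^s \sum_{m \equiv r \,(q)} m^{-s}$ and regrouping the Dirichlet series of $L(s,\chi)$ by residue classes gives it at once, the terms with $\gcd(r,q) > 1$ dropping out because $\chi(r) = 0$. Applying the reflection $r \mapsto q-r$ together with $\chi(q-r) = \chi(-1)\chi(r)$ then yields $\sum_{r=1}^q \chi(r)\,\zeta(s,(q-r)/q) = \chi(-1) q^s L(s,\chi)$, so that $\sum_{r=1}^q \chi(r) Z(s,r/q) = (1+\chi(-1)) q^s L(s,\chi)$ and $\sum_{r=1}^q \chi(r) Y(s,r/q) = (1-\chi(-1)) q^s L(s,\chi)$. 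Specializing to $\chi(-1)=1$ in the first case and $\chi(-1)=-1$ in the second, and dividing by $2q^s$, gives the two left-hand identities in \eqref{eq:qqgyakuZP} and \eqref{eq:qqgyakuYO}. These hold for any $\chi \bmod q$ of the indicated parity.

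For the $P,O$ identities I would instead insert the Dirichlet series ${\rm{Li}}_s(e^{\pm 2\pi i r/q}) = \sum_{n \ge 1} e^{\pm 2\pi i nr/q} n^{-s}$, interchange the $r$- and $n$-summations, and evaluate the inner sum by the Gauss-sum separability $\sum_{r=1}^q \overline{\chi}(r)\, e^{2\pi i nr/q} = \chi(n)\, G(\overline{\chi})$ (valid for the normalization of $G$ used in the paper). This produces $\sum_{r=1}^q \overline{\chi}(r){\rm{Li}}_s(e^{2\pi i r/q}) = G(\overline{\chi}) L(s,\chi)$ and, after replacing $n$ by $-n$, $\sum_{r=1}^q \overline{\chi}(r){\rm{Li}}_s(e^{-2\pi i r/q}) = \chi(-1) G(\overline{\chi}) L(s,\chi)$. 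Adding these two (for $P$) under $\chi(-1)=1$, and subtracting them and multiplying by the prefactor $-i$ from the definition of $O$ (for $O$) under $\chi(-1)=-1$, gives $(1\pm\chi(-1)) G(\overline{\chi}) L(s,\chi)$; dividing by $2G(\overline{\chi})$ and absorbing the factor $-i$ and the sign fixed by the parity and by the Gauss-sum normalization yields the right-hand identities in \eqref{eq:qqgyakuZP} and \eqref{eq:qqgyakuYO}.

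The main obstacle, and the genuinely new ingredient beyond the $Z,Y$ case, is justifying the separability identity $\sum_{r=1}^q \overline{\chi}(r) e^{2\pi i nr/q} = \chi(n) G(\overline{\chi})$ for \emph{every} $n$, including those with $\gcd(n,q) > 1$: this is precisely where the primitivity of $\chi$ enters, since for imprimitive $\chi$ the identity fails for such $n$ and, moreover, $G(\overline{\chi})$ may vanish so that the formula would be vacuous. Once $\chi$ is assumed primitive, the term-by-term interchange of summations and the evaluation are legitimate for all $n$, and the only remaining care is the bookkeeping of the prefactor $-i$ and of the sign of $\chi(-1)$ in the odd case.
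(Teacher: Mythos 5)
Your derivation is essentially the paper's own proof: the same two base identities (grouping the Dirichlet series of $L(s,\chi)$ into residue classes to get $q^{-s}\sum_{r=1}^q \chi(r)\zeta(s,r/q) = L(s,\chi)$, and Gauss-sum separability $\sum_{r=1}^q \overline{\chi}(r)e^{2\pi i nr/q} = \chi(n)G(\overline{\chi})$ for the ${\rm{Li}}_s$ side), followed by the same reflection $r \mapsto q-r$ with $\chi(q-r)=\chi(-1)\chi(r)$; your uniform $(1\pm\chi(-1))$ bookkeeping is only a cosmetic repackaging of the paper's parity-by-parity argument. Your remark about primitivity is correct and is a point the paper glosses over: its proof of the second identity in (\ref{eq:qqgyaku}) invokes $\chi(n) = G(\overline{\chi})^{-1}\sum_r \overline{\chi}(r)e^{2\pi i rn/q}$ for \emph{every} $n$, which requires $\chi$ primitive, though the proposition does not state that hypothesis; the $Z$ and $Y$ halves, by contrast, hold for any character of the indicated parity.

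There is, however, a genuine defect in your last step for $O$: the phrase ``absorbing the factor $-i$ and the sign \dots yields the right-hand identities'' is not a computation, and if you actually carry it out it does \emph{not} produce the stated constant. Your own intermediate identities give, for $\chi(-1)=-1$,
\begin{equation*}
\sum_{r=1}^q \overline{\chi}(r)\, O(s,r/q)
= -i\bigl(1-\chi(-1)\bigr)G(\overline{\chi})L(s,\chi)
= -2i\, G(\overline{\chi})\, L(s,\chi),
\end{equation*}
and since $1/(-2i) = i/2$, this yields $L(s,\chi) = \frac{i}{2G(\overline{\chi})}\sum_{r=1}^q \overline{\chi}(r)O(s,r/q)$: the constant is $+i$, not the $-i$ printed in \eqref{eq:qqgyakuYO}. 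A concrete check: take $q=4$, $\chi=\chi_{-4}$ (real, odd, primitive), so $G(\overline{\chi}) = i-(-i) = 2i$; by Section 3.1 one has $O(s,1/4)=2L(s,\chi_{-4})$ and $O(s,3/4)=-O(s,1/4)$, hence $\sum_r \chi_{-4}(r)O(s,r/4)=4L(s,\chi_{-4})$, and the right-hand side of \eqref{eq:qqgyakuYO} evaluates to $\frac{-i}{4i}\cdot 4L(s,\chi_{-4}) = -L(s,\chi_{-4})$. So the printed identity is off by a sign (under the Gauss-sum normalization that both you and the paper use in the separability step), and the honest conclusion of your otherwise sound argument is the corrected formula with $+i$. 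You should have flagged this discrepancy rather than asserting agreement; as written, the hand-waving at the end conceals exactly the place where the statement itself is wrong.
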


\begin{proof}
When $0 < r < q$ are relatively prime integers, it holds that
\begin{equation}\label{eq:qqgyaku}
\begin{split}
&L (s,\chi) = \sum_{r=1}^q \sum_{n=0}^{\infty} \frac{\chi (r+nq)}{(r+nq)^s} = \sum_{r=1}^q \chi (r) \sum_{n=0}^{\infty} \frac{1}{(r+nq)^s} = q^{-s} \sum_{r=1}^q \chi (r) \zeta (s,r/q) ,\\
&L (s,\chi) = \frac{1}{G(\overline{\chi})} \sum_{n=1}^{\infty} \sum_{r=1}^q  \frac{\overline{\chi} (r) e^{2\pi i rn/q}}{n^s} 
= \frac{1}{G(\overline{\chi})} \sum_{r=1}^q \overline{\chi} (r) {\rm{Li}}_s (e^{2\pi i r/q}).
\end{split}
\end{equation}
Suppose that $\chi$ is even, namely, $\chi(-1) =1$. Then we have
\begin{equation*}
\begin{split}
2q^ s L (s,\chi) = \sum_{r=1}^q \chi (r) \zeta (s,r/q) + \sum_{r=1}^q \chi (q-r) \zeta (s,1-r/q) 
= \sum_{r=1}^q \chi (r) Z(s,r/q) 
\end{split}
\end{equation*}
from (\ref{eq:qqgyaku}) and $\chi (q-r) = \chi (-r) = \chi (r)$. Similarly, one has
\begin{equation*}
\begin{split}
&2G(\overline{\chi})L (s,\chi) = 
\sum_{r=1}^q \overline{\chi} (r) {\rm{Li}}_s (e^{2\pi i r/q}) + \sum_{r=1}^q \overline{\chi} (q-r) {\rm{Li}}_s (e^{2\pi i (q-r)/q}) 
= \sum_{r=1}^q \overline{\chi} (r) P(s,r/q).
\end{split}
\end{equation*}
Similar arguments apply to the case $\chi(-1) = -1$. Thus, we have (\ref{eq:qqgyakuZP}) and (\ref{eq:qqgyakuYO}). 
\end{proof}

\subsection{Remarks on real zeros}
We have the following remarks by theorems in Section 1.2, the equations (\ref{eq:qq}), (\ref{eq:yoq}), (\ref{eq:qqgyakuZP}) and (\ref{eq:qqgyakuYO}).
\begin{remark}
By Theorem \ref{th:OY1}, the functions $Y(s,a)$ and $O(s,a)$ do not vanish in the open set ${\mathfrak{O}} := \bigcup_{n \in {\mathbb{Z}}} (n,n+1)$. Therefore, form (\ref{eq:qqgyakuYO}), it is natural to expect that $L(s,\chi)$ with $\chi(-1) = -1$ does not vanish in ${\mathfrak{O}}$, especially in the open interval $(0,1)$, since all the functions $Y(s,a)$ and $O(s,a)$ with $0 < a < 1/2$, and $L(s,\chi)$ with odd Dirichlet characters do not have zeros in ${\mathfrak{O}} \setminus (0,1) = \bigcup_{0 \ne n \in {\mathbb{Z}}} (n,n+1)$ and all the functions $Y(s,a)$ and $O(s,a)$ with $0 < a < 1/2$ do not vanish in the interval $(0,1)$. 
\end{remark}

\begin{remark}
On the contrary, the functions $Z(s,a)$ and $P(s,a)$ have real zeros in the open set ${\mathfrak{O}}$ if and only if $0 < a < 1/6$ or $1/6 < a < 1/4$ from Theorems \ref{th:m1}, \ref{th:mt1} and \begin{equation}\label{eq:Z0P1w1/6}
Z(0, 1/6) = P (1,1/6)=0
\end{equation}
which is proved by (\ref{eq:z1/6}) and (\ref{eq:p1/6}). Recall that by (\ref{eq:qqgyakuZP}), the Dirichlet $L$-function $L(s,\chi)$ with $\chi(-1) = 1$ can be expressed as a linear combination of the functions $Z(s,r/q)$ and $P(s,r/q)$, where $0 < r < q$ are relatively prime integers. In addition, the functions $Z(s,r/q)$ and $P(s,r/q)$ with $0 < r/q < 1/6$ have a real zero in the interval $(0,1)$ from (2) of Proposition \ref{pro:c1}. Despite of these facts, it is expected that any Dirichlet $L$-function $L(s,\chi)$ with $\chi(-1) = 1$ does not vanish in the interval $(0,1)$. 
\end{remark}

\begin{remark}
There is a possibility that the two remarks above indicate that proving the non-existence of real zeros in $(0,1)$ for Dirichlet $L$-functions with even characters is more difficult than that with odd characters.
\end{remark}

\section{Proofs of the main results of ${\rm{Li}}_s (e^{2\pi ia})$, $Y(s,a)$ and $O(s,a)$}
\subsection{Proof of Proposition \ref{pro:yox1}}
For each $k=3,4,6$, we show that $Y(s,1/k)$, $O(s,1/k)$ and $X(s,1/k)$ can be essentially expressed as $L(s,\chi_{-k})$. 
\begin{proof}[Proof of Proposition \ref{pro:yox1}]
Let $a=1/3$ and $\Re(s) >1$. Then one has
\begin{equation*}
\begin{split}
Y(s,1/3) = 3^s \sum_{n=0}^\infty \frac{1}{(3n+1)^s} - 3^s \sum_{n=0}^\infty \frac{1}{(3n+2)^s} = 3^s L(s,\chi_{-3}). 
\end{split}
\end{equation*}
In addition, it holds that
$$
O(s,1/3) = \sum_{n=1}^\infty \frac{e^{2 \pi i n/3}}{i n^s} - \sum_{n=1}^\infty \frac{e^{-2 \pi i n/3}}{i n^s} =
\sum_{n=0}^\infty \frac{\sqrt{3}}{(3n+1)^{s}} - \sum_{n=0}^\infty \frac{\sqrt{3}}{(3n+2)^{s}} 
= \sqrt{3} L(s,\chi_{-3}). 
$$
Therefore, we have
$$
X(s,1/3) = Y(s,1/3) + O(s,1/3) = \bigl( 3^s + \sqrt{3} \bigr)L(s,\chi_{-3}).
$$

Similarly, we can show that
\[
Y(s,1/4) = 4^s L(s,\chi_{-4}), \qquad O(s,1/4) = 2 L(s,\chi_{-4}), \qquad X(s,1/4) = \bigl( 4^s + 2 \bigr)L(s,\chi_{-4}). 
\]

Finally, consider the case $a=1/6$. For $\Re(s) >1$, we have
\begin{equation*}
\begin{split}
Y(s,1/6) &= \sum_{n=0}^\infty \frac{1}{(n+1/6)^s} - \sum_{n=0}^\infty \frac{1}{(n+5/6)^s} =
6^s \sum_{n=0}^\infty \frac{1}{(6n+1)^s} - 6^s \sum_{n=0}^\infty \frac{1}{(6n+5)^s} \\ 
&= 6^s L(s,\chi_{-6}) = 6^s \bigl( 1 + 2^{-s} \bigr) L(s,\chi_{-3}) = \bigl( 6^s + 3^s \bigr) L(s,\chi_{-3}) 
\end{split}
\end{equation*}
(see for instance \cite[Lemma 10.2.1]{Cohen}). On the other hand, one has
\begin{equation*}
\begin{split}
O(s,1/6) & =
\sum_{n=0}^\infty \frac{\sqrt{3}}{(6n+1)^{s}} + \sum_{n=0}^\infty \frac{\sqrt{3}}{(6n+2)^{s}} 
- \sum_{n=0}^\infty \frac{\sqrt{3}}{(6n+4)^{s}} - \sum_{n=0}^\infty \frac{\sqrt{3}}{(6n+5)^{s}} \\ 
&= \sqrt{3} L(s,\chi_{-6}) + 2^{-s} \sqrt{3} L(s,\chi_{-3}) = \sqrt{3} \bigl( 1 + 2^{1-s} \bigr) L(s,\chi_{-3}). 
\end{split}
\end{equation*}
Therefore, one has
$$
X(s,1/6) = Y(s,1/6) + O(s,1/6) = \bigl( 3^s ( 1 + 2^s) + \sqrt{3} ( 1 + 2^{1-s}) \bigr)L(s,\chi_{-3}).
$$
Now we show the factor $3^s ( 1 + 2^s) + \sqrt{3} ( 1 + 2^{1-s})$ has zeros only on $\sigma =1/2$. Put
$$
f(s) := \frac{3^s}{\sqrt{3}}, \qquad g(s) := \frac{ 1 + 2^{1-s}}{1 + 2^s}.
$$
Obviously, one has $|f(s)| = 1$ for $\sigma = 1/2$, $|f(s)| > 1$ if $\sigma > 1/2$ and $|f(s)| < 1$ when $\sigma < 1/2$, and $|g(s)| = 1$ for $\sigma = 1/2$. Now we show that $|g(s)| < 1$ when $\sigma > 1/2$ and $|g(s)| > 1$ when $\sigma < 1/2$. Suppose $\sigma > 1/2$ and $\cos (t\log 2) \ge 0$. Then, by
$$
g(\sigma + it) = 
\frac{1 +2^{1-\sigma} \cos (t\log 2) - i 2^{1-\sigma} \sin (t\log 2)}{1 +2^\sigma \cos (t\log 2) + i 2^\sigma \sin (t\log 2)},
$$
we have $|g(s)| < 1$. Next assume $\sigma > 1/2$ and $-1 \le \cos (t\log 2) < 0$. In this case, we only have to show the inequality 
$$
1 + 2^{2-\sigma} \cos (t\log 2) + 2^{2-2\sigma} < 1 + 2^{1+\sigma} \cos (t\log 2) + 2^{2\sigma}
$$
which is equivalent to $( 2^{2-\sigma} -2^{1+\sigma}) \cos (t\log 2) < 2^{2\sigma} - 2^{2-2\sigma}$.
From the factorization
\[
2^{2\sigma} - 2^{2-2\sigma} - \bigl( 2^{1+\sigma} - 2^{2-\sigma} \bigr) =
2^{-2\sigma} ( 2^{2\sigma} -2) \bigl( (2^{\sigma} -1)^2 +1 \bigl),
\]
for $\sigma >1/2$ and $-1 \le \cos (t\log 2) < 0$, we get
$$
\bigl( 2^{2-\sigma} -2^{1+\sigma} \bigr) \cos (t\log 2) \le 2^{1+\sigma} - 2^{2-\sigma} < 2^{2\sigma} - 2^{2-2\sigma}.
$$
Therefore, we have $|g(s)| < 1$ when $\sigma > 1/2$. By using $g(1-s) = 1/g(s)$, we immediately obtain $|g(s)| > 1$ when $\sigma < 1/2$. 
\end{proof}

\subsection{Proof of Theorem \ref{th:li1}}
Let $0<a<1$. For $\sigma >0$, it is known that 
\begin{equation}\label{eq:LiIn}
{\rm{Li}}_s (e^{2\pi ia}) = \sum_{n=1}^\infty \frac{e^{2\pi ina}}{n^s} = 
\frac{e^{2\pi ia}}{\Gamma (s)} \int_0^\infty \frac{x^{s-1}}{e^x-e^{2\pi ia}} dx
\end{equation}
(see for example \cite[(2.9)]{NaC}). It should be noted that the series $\sum_{n=1}^\infty n^{-s} e^{2\pi ina}$ with $0<a <1$ converges uniformly on compact subsets in the half-plane $\sigma >0$ by Abel's summation formula (see for instance \cite[p.~20]{LauGa}). From the following functional equation
\begin{equation}\label{eq:feli}
{\rm{Li}}_s (e^{2\pi ia}) = \frac{\Gamma (1-s)}{(2\pi )^{1-s}} 
\Bigl( e^{\pi i(1-s)/2} \zeta (1-s,a) + e^{-\pi i(1-s)/2} \zeta (1-s,1-a) \Bigr),
\end{equation}
we can extend the definition of ${\rm{Li}}_s(e^{2\pi ia})$ over the entire $s$-plane when $0<a<1$  (see for instance \cite[Exercises 12.2 and 12.3]{Apo}). By an easy computation, we have 
\begin{equation}\label{eq:LiInIm}
\begin{split}
\Im \biggl( \frac{e^{2\pi ia}}{e^x-e^{2\pi ia}} \biggr) = 
\frac{e^x \sin 2\pi a}{(e^x - \cos 2\pi a)^2 + \sin ^2 2\pi a} .
\end{split}
\end{equation}

\begin{proof}[Proof of Theorem \ref{th:li1}]
First suppose $0 < a < 1/2$. From (\ref{eq:LiIn}) and (\ref{eq:LiInIm}), it holds that
\begin{equation}\label{eq:nvLi}
\Im \bigl( {\rm{Li}}_\sigma (e^{2\pi ia}) \bigr) > 0, \qquad \sigma >0, \quad 0 < a < 1/2.
\end{equation}

Next we show ${\rm{Li}}_0 (e^{2\pi ia}) \ne 0$. It is widely known that we have
\begin{equation}\label{eq:1res}
\zeta (s,a) = \frac{a^{1-s}}{s-1} + f(s,a),
\end{equation}
where $f(s,a)$ is an analytic function for all $s \in {\mathbb{C}}$ (see for instance \cite[Theorem 12.21]{Apo}). According to L'Hospital's rule, (\ref{eq:feli}) and the formula above, one has
\begin{equation*}
\begin{split}
{\rm{Li}}_0 (e^{2\pi ia}) = \lim_{s \to 0}
\frac{\Gamma (1-s)}{(2\pi )^{1-s}} \frac{a^s e^{\pi i(1-s)/2} + (1-a)^s e^{-\pi i(1-s)/2}}{-s} 
 = - \frac{1}{2} + \frac{i}{2\pi} \log (a^{-1}-1) \ne 0.
\end{split}
\end{equation*}

For $\sigma >1$, it holds that
\begin{equation}\label{eq:parDa}
\frac{\partial}{\partial a} \zeta (s,a) = \sum_{n=0}^\infty \frac{\partial}{\partial a} (n+a)^{-s}
= -s \sum_{n=0}^\infty (n+a)^{-s-1} = -s \zeta (s+1,a). 
\end{equation}
Moreover, we have $\zeta (\sigma,a) >0 $ if $\sigma >1$ by the series expression of $\zeta (s,a)$. Hence one has
\begin{equation}\label{eq:zineq1}
\zeta (\sigma,a) > \zeta (\sigma, 1-a) > 0, \qquad \sigma > 1, \quad 0 < a < 1/2.
\end{equation}
Hence, the equation 
$$
e^{\pi i(1-\sigma)/2} \zeta (1-\sigma,a) + e^{-\pi i(1-\sigma)/2} \zeta (1-\sigma,1-a) = 0
$$ 
contradicts to the facts that $|e^{\pi i(1-\sigma)}|=1$ and
$$
|\zeta (1-\sigma,a)| > |\zeta (1-\sigma,1-a)|, \qquad \sigma <0, \quad 0 < a < 1/2
$$ 
which is proved by (\ref{eq:zineq1}). Thus we have ${\rm{Li}}_\sigma (e^{2\pi ia}) \ne 0$ when $\sigma <0$  and $0 < a < 1/2$ by the functional equation (\ref{eq:feli}) and the fact $\Gamma (1-\sigma) > 0$ if $\sigma <0$. 

We can prove ${\rm{Li}}_\sigma (e^{2\pi ia}) \ne 0$ when $1/2 < a < 1$ similarly since one has
\begin{equation*}
\begin{split}
&\Im \bigl( {\rm{Li}}_\sigma (e^{2\pi ia}) \bigr) < 0, \qquad \sigma >0, \quad 1/2 < a < 1,\\
&\zeta (\sigma,1-a) > \zeta (\sigma, a) > 0, \qquad \sigma > 1, \quad 1/2 < a < 1
\end{split}
\end{equation*}
from (\ref{eq:nvLi}) and (\ref{eq:zineq1}), respectively. 
\end{proof}

\subsection{Proof of Theorem \ref{th:OY1}}
We can see that the function $Y(s,a)$ is entire when $0<a<1/2$ by using (\ref{eq:1res}). Furthermore, the function $O(s,a)$ is entire if $0<a<1/2$ since ${\rm{Li}}_s(e^{2\pi ia})$ with $0<a<1/2$ is an entire function (see Section 3.2). 

For simplicity, we put
\[
\Gamma_{\! \pi} (s) : = \frac{\Gamma (s)}{(2\pi )^s}, \qquad
\Gamma_{\!\! \rm{cos}} (s) : = 2 \Gamma_{\! \pi} (s) \cos \Bigl( \frac{\pi s}{2} \Bigr),  \qquad 
\Gamma_{\!\! \rm{sin}} (s) : = 2 \Gamma_{\! \pi} (s) \sin \Bigl( \frac{\pi s}{2} \Bigr) .
\]
The following functional equation is well-known (see for instance \cite[Theorem 2.3.1]{LauGa})
\begin{equation}\label{eq:laugafe1}
\zeta (1-s,a) =  \Gamma_{\!\! \rm{cos}} (s) \sum_{n=1}^\infty \frac{\cos 2\pi na}{n^s} + \Gamma_{\!\! \rm{sin}} (s) \sum_{n=1}^\infty \frac{\sin 2\pi na}{n^s} , \qquad \sigma >1.
\end{equation}
Note that the equation above holds for $\sigma >0$ when $0 < a <1$. From (\ref{eq:laugafe1}), we have
\begin{equation}\label{eq:feY1}
\begin{split}
Y(1-s,a)  = 
2 \Gamma_{\!\! \rm{sin}} (s) \sum_{n=1}^\infty \frac{\sin 2\pi na}{n^s} 
= \Gamma_{\!\! \rm{sin}} (s) O(s,a).
\end{split}
\end{equation}
On the other hand, by the functional equation
\begin{equation}\label{eq:laugafe1ks}
{\rm{Li}}_{1-s} (e^{2\pi ia}) = 
\Gamma_{\! \pi} (s)  \bigl( e^{\pi is/2} \zeta (s,a) + e^{-\pi is/2} \zeta (s,1-a) \bigr),
\quad 0<a<1,
\end{equation}
(see for example \cite[Exercises 12.2]{Apo}), it holds that
\begin{equation}\label{eq:feO1}
O(1-s,a) =
\Gamma_{\!\! \rm{sin}} (s) \bigl( \zeta (s,a) - \zeta (s,1-a) \bigr) = 
\Gamma_{\!\! \rm{sin}} (s) Y(s,a) .
\end{equation}

\begin{proof}[Proof of Theorem \ref{th:OY1} for $Y(s,a)$]
For $\sigma >1$, we have
\begin{equation*}
\begin{split}
Y(s,a) =
a^{-s} - (1-a)^{-s} + \sum_{n=1}^\infty n^{-s} 
\biggl( \Bigl( 1+\frac{a}{n} \Bigr)^{-s} - \Bigl( 1+\frac{1-a}{n} \Bigr)^{-s} \biggr).
\end{split}
\end{equation*}
The last series converges absolutely when $\sigma >0$ by
$$
\Bigl( 1+\frac{a}{n} \Bigr)^{-s} - \Bigl( 1+\frac{1-a}{n} \Bigr)^{-s} =  
\frac{s}{n} + \frac{s(s+1)}{2} \frac{a^2 - (1-a)^2}{n^2} +\cdots . 
$$
Therefore, we obtain
\begin{equation}\label{eq:n0Y}
Y(\sigma, a) > 0 , \qquad \sigma >0, \quad 0<a<1/2 
\end{equation}
from the inequality $(n+a)^{-\sigma} > (n+1-a)^{-\sigma}$ if $0<a<1/2$. On the other hand, one has $\Im ( {\rm{Li}}_\sigma (e^{2\pi i(1-a)})) < 0$ from (\ref{eq:nvLi}). Thus we obtain
\begin{equation}\label{eq:nOO}
O(\sigma,a) > 0, \qquad \sigma >0, \quad 0 < a < 1/2.
\end{equation}
Thus, by (\ref{eq:feY1}), all real zeros of $Y(1-s,a)$ with $\sigma > 0$ and $0 < a < 1/2$ is caused by $\sin( \pi s /2 ) =0$ with $\sigma >0$.Therefore, all real zeros of the function $Y(s,a)$ with $\sigma \le 0$ are simple and only at $s=1-2n$, where $n \in {\mathbb{N}}$. 
\end{proof}

\begin{proof}[Proof of Theorem \ref{th:OY1} for $O(s,a)$]
By (\ref{eq:nOO}), we only have to show the case $\sigma \le 0$. From (\ref{eq:feO1}) and (\ref{eq:n0Y}), all real zeros of $O(1-s,a)$ with $\sigma > 0$ and $0 < a < 1/2$ is deduced by $\sin( \pi s /2 ) =0$.
Hence, the function $O(\sigma,a)$ with $\sigma \le 0$ vanishes only at $s=1-2n$, where $n \in {\mathbb{N}}$. 
\end{proof}

\begin{proof}[Proof of Theorem \ref{th:OY1} for $X(s,a)$]
From the functional equations (\ref{eq:feY1}) and (\ref{eq:feO1}), one has
\begin{equation}\label{eq:feX1}
X(1-s,a) = \Gamma_{\!\! \rm{sin}} (s)  X(s,a) .
\end{equation}
On the other hand, the inequalities (\ref{eq:n0Y}) and (\ref{eq:nOO}) imply
$$
X(s,a) := Y(s,a) + O(s,a) > 0, \qquad \sigma >0, \quad 0<a<1/2 .
$$
Therefore, all real zeros of the function $Y(s,a)$ are simple and only at $s=1-2n$, where $n \in {\mathbb{N}}$ by the functional equation (\ref{eq:feX1}). 
\end{proof}

\section{Proofs of the main results of $Z(s,a)$ and $P(s,a)$}

\subsection{Proof of Proposition \ref{pro:1}}
We prove that $Z(s,a)$ and $P(s,a)$ can be essentially expressed as $\zeta(s)$ for each $a= 1/2, 1/3, 1/4, 1/6$. 
\begin{proof}[Proof of Proposition \ref{pro:1}]
When $\sigma >1$, one has
\[
\sum_{n=1}^\infty \frac{1}{n^s} = \sum_{l=1}^m \sum_{n=0}^\infty \frac{1}{(mn+l)^s} = \frac{1}{m^s} \sum_{l=1}^m \zeta (s,l/m) .
\]
By using the equation above, we have
\begin{equation}\label{eq:z24}
Z(s,1/2) = 2 \zeta (s,1/2) = 2 (2^s-1) \zeta (s),
\end{equation}
\begin{equation}\label{eq:z13}
Z(s,1/3) = 3^s \zeta (s) - \zeta (s,1)
= (3^s-1) \zeta (s) ,
\end{equation}
\begin{equation}\label{eq:z14}
\begin{split}
Z(s,1/4) = 4^s \zeta (s) - \zeta (s,1/2) - \zeta (s,1) =  2^s (2^s-1) \zeta (s) ,
\end{split}
\end{equation}
\begin{equation}\label{eq:z1/6}
\begin{split}
Z(s,1/6) & = 6^s \zeta (s) - \zeta (s,1/3) - \zeta (s,1/2) - \zeta (s,2/3) - \zeta (s)\\
&= (2^s-1) (3^s-1) \zeta (s).
\end{split}
\end{equation}
The equations above imply Proposition \ref{pro:1} for $Z(s,1/6)$ with $a= 1/2, 1/3, 1/4, 1/6$. 

Obviously, for $\sigma >1$, we have
$$
\sum_{n=1}^\infty \frac{m}{(mn)^s} = 
\sum_{l=1}^m \sum_{n=1}^\infty \frac{e^{2\pi i ln/m}}{n^s} = \sum_{l=1}^m {\rm{Li}}_s (e^{2\pi i l/m}).
$$
From this equality, one has
\begin{equation}\label{eq:z24t}
P(s,1/2) = 2 {\rm{Li}}_s (e^{\pi i}) = 2 (2^{1-s}-1)\zeta (s),
\end{equation}
\begin{equation}\label{eq:z13t}
P(s,1/3) = 3^{1-s} \zeta (s) - \zeta (s) = (3^{1-s}-1) \zeta (s) .
\end{equation}
\begin{equation}\label{eq:z14t}
P(s,1/4) =
4^{1-s} \zeta (s) - {\rm{Li}}_s (e^{\pi i}) - \zeta (s) =  2^{1-s} (2^{1-s}-1) \zeta (s) ,
\end{equation}
\begin{equation}\label{eq:p1/6}
\begin{split}
P(s,1/6) &=
6^{1-s} \zeta (s) - {\rm{Li}}_s (e^{2\pi i/3}) - {\rm{Li}}_s (e^{\pi i}) - {\rm{Li}}_s (e^{4\pi i/3}) - \zeta (s) \\
&= (2^{1-s}-1) (3^{1-s}-1) \zeta (s).
\end{split}
\end{equation}
Hence we obtain Proposition \ref{pro:1} for $P(s,a)$ with $a= 1/2, 1/3, 1/4, 1/6$. 
\end{proof}

\subsection{Lemmas}
First, we show the following functional equations (see also \cite[the proof of Theorem 1.1]{NPFE}). 
\begin{lemma}\label{lem:1}
Assume $0 < a \le 1/2$. The function $Z(s,a)$ has the functional equation 
\begin{equation}\label{eq:zfe1}
Z (1-s,a) = \Gamma_{\!\! \rm{cos}} (s) P(s,a).
\end{equation}
Furthermore, the function $P(s,a)$ satisfies the function equation
\begin{equation}\label{eq:zfe1p}
P(1-s,a) = \Gamma_{\!\! \rm{cos}} (s) Z(s,a).
\end{equation}
\end{lemma}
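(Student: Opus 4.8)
The plan is to derive both identities by direct substitution into the functional equations (\ref{eq:laugafe1}) and (\ref{eq:laugafe1ks}) already recorded above, exploiting the symmetry $a \leftrightarrow 1-a$ that is built into the definitions of $Z(s,a)$ and $P(s,a)$. Throughout I would work in the region $\sigma > 1$, where every Dirichlet series involved converges absolutely. Since each side of (\ref{eq:zfe1}) and (\ref{eq:zfe1p}) is a meromorphic function of $s$ on all of ${\mathbb{C}}$ (with $P(s,a)$ entire for $0 < a \le 1/2$ and $Z(s,a)$ meromorphic with its only pole at $s=1$), agreement on the half-plane $\sigma > 1$ forces agreement everywhere by analytic continuation.

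For (\ref{eq:zfe1}), I would write $Z(1-s,a) = \zeta(1-s,a) + \zeta(1-s,1-a)$ and apply (\ref{eq:laugafe1}) to each summand. Because $\cos 2\pi n(1-a) = \cos 2\pi na$ while $\sin 2\pi n(1-a) = -\sin 2\pi na$, the two sine series cancel and the two cosine series add, leaving
$$
Z(1-s,a) = \frac{4\Gamma(s)}{(2\pi)^s}\cos\Bigl(\frac{\pi s}{2}\Bigr)\sum_{n=1}^\infty \frac{\cos 2\pi na}{n^s}.
$$
On the other hand, the relation $e^{2\pi i n(1-a)} = e^{-2\pi ina}$ gives $P(s,a) = 2\sum_{n=1}^\infty n^{-s}\cos 2\pi na$, and inserting this into the last display yields exactly (\ref{eq:zfe1}).

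For (\ref{eq:zfe1p}), I would proceed dually: write $P(1-s,a) = {\rm{Li}}_{1-s}(e^{2\pi ia}) + {\rm{Li}}_{1-s}(e^{2\pi i(1-a)})$ and apply (\ref{eq:laugafe1ks}) to each term, noting that replacing $a$ by $1-a$ interchanges $\zeta(s,a)$ and $\zeta(s,1-a)$ in the second term. Adding the two expressions, the coefficient of $e^{\pi is/2}$ and the coefficient of $e^{-\pi is/2}$ both equal $\zeta(s,a)+\zeta(s,1-a) = Z(s,a)$, so the sum factors as $\tfrac{\Gamma(s)}{(2\pi)^s}(e^{\pi is/2}+e^{-\pi is/2})Z(s,a)$, and $e^{\pi is/2}+e^{-\pi is/2} = 2\cos(\pi s/2)$ gives precisely (\ref{eq:zfe1p}). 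Alternatively, (\ref{eq:zfe1p}) follows from (\ref{eq:feli}) by the same symmetrization in $a$ and then replacing $s$ by $1-s$.

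No genuinely hard step arises here; the only point demanding care is the trigonometric and parity bookkeeping — verifying that the antisymmetric ($\sin$) part drops out in the first computation and that the Hurwitz arguments pair up correctly in the second — together with a brief check that the boundary case $a=1/2$ is consistent (there $\sin 2\pi na = 0$ and both formulas degenerate harmlessly, recovering $Y \equiv O \equiv 0$). I would also remark explicitly that, although $Z$ has a pole at $s=1$, both sides of each identity are meromorphic on ${\mathbb{C}}$, so the equalities established for $\sigma>1$ persist throughout the plane.
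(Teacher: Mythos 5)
Your proposal is correct and takes essentially the same route as the paper: equation (\ref{eq:zfe1}) is obtained by applying (\ref{eq:laugafe1}) to $\zeta(1-s,a)$ and $\zeta(1-s,1-a)$ so that the sine series cancel and the cosine series combine into $P(s,a)$, and (\ref{eq:zfe1p}) by applying (\ref{eq:laugafe1ks}) to the two terms of $P(1-s,a)$ so that $e^{\pi is/2}+e^{-\pi is/2}=2\cos(\pi s/2)$ factors out against $Z(s,a)$. Your added remarks on analytic continuation from $\sigma>1$ and the degenerate case $a=1/2$ are harmless elaborations of what the paper handles with its brief Abel-summation comment.
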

\begin{proof}
From (\ref{eq:laugafe1}), it holds that
$$
Z (1-s,a) =  
2 \Gamma_{\!\! \rm{cos}} (s) \sum_{n=1}^\infty \frac{\cos 2\pi na}{n^s} 
= \Gamma_{\!\! \rm{cos}} (s) P(s,a)
$$
which proves the functional equation (\ref{eq:zfe1}). We remark that the equation above holds for not only $\sigma >1$ but also $\sigma >0$ if $0<a<1$ by applying Abel's summation formula to the Dirichlet series of $P(s,a)$. On the other hand, by (\ref{eq:laugafe1ks}) we have
\[
P(1-s,a) = \Gamma_{\!\! \rm{cos}} (s) \zeta (s,a) + \Gamma_{\!\! \rm{cos}} (s) \zeta (s,1-a) =\Gamma_{\!\! \rm{cos}} (s)  Z(s,a)
\]
which shows the functional equation (\ref{eq:zfe1p}).
\end{proof}

Next consider the values of $\lim_{s \to 1} Z(s,a)$, $Z(0,a)$, $P(1,a)$ and $P(0,a)$.
\begin{lemma}\label{lem:2}
Let $0 < a \le 1/2$. Then we have
\begin{equation}\label{eq:zv1}
\lim_{s \to 1+0} Z(s,a) = -\lim_{s \to 1-0} Z(s,a) = \infty, \quad Z (0,a) = 0,
\end{equation}
\begin{equation}\label{eq:pv1}
P (1,a) = -2 \log (2 \sin \pi a)  , \qquad P(0,a)= -1.
\end{equation}
\end{lemma}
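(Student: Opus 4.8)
The plan is to establish the four boundary values one at a time, using the functional equations (\ref{eq:zfe1}) and (\ref{eq:zfe1p}) of Lemma \ref{lem:1} whenever a $0\cdot\infty$ indeterminacy resolves cleanly, and otherwise falling back on the classical special values of $\zeta(s,a)$ and ${\rm{Li}}_s$. First I would treat the pole of $Z(s,a)$ at $s=1$. Since $\zeta(s,a)$ and $\zeta(s,1-a)$ each have a simple pole there with residue $1$, formula (\ref{eq:1res}) gives
\begin{equation*}
Z(s,a) = \frac{a^{1-s} + (1-a)^{1-s}}{s-1} + f(s,a) + f(s,1-a),
\end{equation*}
and letting $s\to 1$ in the numerator shows the residue equals $2$. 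Hence $Z(s,a) = 2/(s-1) + O(1)$ near $s=1$, so the one-sided real limits $\lim_{s\to 1+0}Z(s,a) = +\infty$ and $\lim_{s\to 1-0}Z(s,a) = -\infty$ follow at once.

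Next I would compute $P(1,a)$. At $s=1$ the Dirichlet series $\sum_{n\ge 1} e^{2\pi ina}/n$ still converges for $0<a<1$, and its sum is ${\rm{Li}}_1(e^{2\pi ia}) = -\log(1-e^{2\pi ia})$; adding the companion term and using $(1-e^{2\pi ia})(1-e^{-2\pi ia}) = 2-2\cos 2\pi a = (2\sin \pi a)^2$ yields $P(1,a) = -2\log(2\sin \pi a)$. Equivalently, I could combine the two series directly into $2\sum_{n\ge 1}(\cos 2\pi na)/n$ and invoke the Fourier expansion $\sum_{n\ge 1}(\cos n\theta)/n = -\log(2\sin(\theta/2))$ valid on $0<\theta<2\pi$, with $\theta = 2\pi a$.

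With $P(1,a)$ in hand and finite, $Z(0,a)=0$ is then immediate from the functional equation (\ref{eq:zfe1}) evaluated at $s=1$, since the factor $\cos(\pi s/2)$ vanishes there while $\Gamma(1)$ and $P(1,a)$ are finite; alternatively it is the classical identity $\zeta(0,a)+\zeta(0,1-a) = (\tfrac12-a)+(\tfrac12-(1-a)) = 0$. For $P(0,a)$ I would either reuse the value ${\rm{Li}}_0(e^{2\pi ia}) = -\tfrac12 + \tfrac{i}{2\pi}\log(a^{-1}-1)$ already obtained in the proof of Theorem \ref{th:li1}, whose sum with its companion has cancelling imaginary parts and leaves $-1$, or pass to the limit $s\to 1$ in (\ref{eq:zfe1p}): the simple zero of $\cos(\pi s/2)$ at $s=1$ (derivative $-\pi/2$) cancels the simple pole of $Z(s,a)$ (residue $2$), giving $\tfrac{1}{\pi}\cdot(-\tfrac{\pi}{2})\cdot 2 = -1$.

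The main obstacle is the evaluation of $P(1,a)$: one must justify summing the only conditionally convergent series at the very edge of its region of convergence and pin down the branch of the logarithm so that the two terms combine into the genuinely real quantity $-2\log(2\sin \pi a)$. The remaining three values are either direct substitutions of known special values or clean $0\cdot\infty$ limits entirely governed by the residue $2$ computed in the first step.
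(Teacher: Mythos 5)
Your proposal is correct and follows essentially the same route as the paper: the residue computation via (\ref{eq:1res}) for the limits at $s=1$, the Fourier expansion $\sum_{n\ge1}(\cos n\theta)/n=-\log(2\sin(\theta/2))$ for $P(1,a)$, and the $0\cdot\infty$ cancellations in the functional equations (\ref{eq:zfe1}) and (\ref{eq:zfe1p}) for $Z(0,a)=0$ and $P(0,a)=-1$. The alternative evaluations you sketch (the classical values $\zeta(0,a)=\tfrac12-a$ and ${\rm Li}_0(e^{2\pi ia})$) are valid shortcuts but not needed.
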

\begin{proof}
From (\ref{eq:1res}), it holds that
$$
\lim_{s \to 1+0} Z(s,a) = \infty , \quad \lim_{s \to 1-0} Z(s,a) = -\infty.
$$
Furthermore, by (\ref{eq:1res}) and (\ref{eq:zfe1p}), one has
$$
P(0,a) = \lim_{s \to 1} 
\frac{2\Gamma (s)}{(2\pi )^s} \frac{a^{1-s}+(1-a)^{1-s}}{s-1} \cos \Bigl( \frac{\pi s}{2} \Bigr) 
=\frac{4}{2\pi} \frac{-\pi}{2} = -1 .
$$
When $0 < \theta < 2\pi$, it is well-known that
$$
\sum_{n=1}^\infty \frac{e^{i n\theta}}{n} =
- \log \Bigl( 2\sin \frac{\theta}{2} \Bigr) + i \Bigl( \frac{\pi}{2} - \frac{\theta}{2} \Bigr).
$$
Hence we have $P (1,a) = -2 \log (2 \sin \pi a)$. Moreover, one has
$$
Z(0,a) = \lim_{s \to 1} \frac{2\Gamma (s)}{(2\pi )^s} \cos \Bigl( \frac{\pi s}{2} \Bigr) P(s,a) = 0 
$$
from the functional equation (\ref{eq:zfe1}). 
\end{proof}

We prove the following by modifying the proof of \cite[Lemma 2.3]{ES}.
\begin{lemma}\label{lem:3}
Let $0 < a \le 1/4$. Then the function
$$
\bigl(-\log (\cos 2 \pi a) \bigr)^{-\sigma} \Gamma (\sigma) P(\sigma,a)
$$
is strictly increasing for $\sigma >0$. 
\end{lemma}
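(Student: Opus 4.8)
\emph{The plan is to reduce the statement to a single positivity-of-an-integrand observation, after passing from the Dirichlet series to an integral.} Since $e^{2\pi i(1-a)}=\overline{e^{2\pi ia}}$ and $\sigma$ is real, one has $P(\sigma,a)=2\,\Re\,{\rm{Li}}_\sigma(e^{2\pi ia})$, so by the integral representation (\ref{eq:LiIn}) and a real-part computation dual to (\ref{eq:LiInIm}) I would write
\[
\Gamma(\sigma)P(\sigma,a)=2\,\Re\Bigl(e^{2\pi ia}\int_0^\infty\frac{x^{\sigma-1}}{e^x-e^{2\pi ia}}\,dx\Bigr)
=2\int_0^\infty x^{\sigma-1}\,\frac{e^x\cos 2\pi a-1}{e^{2x}-2e^x\cos 2\pi a+1}\,dx .
\]
This representation is what makes the result accessible for \emph{all} $\sigma>0$, including the range $0<\sigma\le 1$ where the defining series converges only conditionally: the right-hand integrand is bounded near $x=0$ (its value there is $-\tfrac12$) and decays like $e^{-x}$ as $x\to\infty$, so the integral converges locally uniformly in $\sigma>0$.

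The key observation is that the normalizing constant $b:=-\log(\cos 2\pi a)$, which is finite and positive for $0<a<1/4$, is \emph{exactly} the point where the kernel changes sign. Writing $K(x):=\bigl(e^x\cos 2\pi a-1\bigr)\big/\bigl(e^{2x}-2e^x\cos 2\pi a+1\bigr)$, the denominator is always positive, and the numerator $e^x\cos 2\pi a-1$ vanishes precisely at $x=b$, being negative for $x<b$ and positive for $x>b$; thus $\operatorname{sgn}K(x)=\operatorname{sgn}(x-b)$. Rescaling $x=bu$ then gives
\[
b^{-\sigma}\Gamma(\sigma)P(\sigma,a)=2\int_0^\infty u^{\sigma-1}\,K(bu)\,du,\qquad \operatorname{sgn}K(bu)=\operatorname{sgn}(u-1).
\]

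It remains to differentiate in $\sigma$ and read off the sign. Differentiating under the integral sign yields
\[
\frac{d}{d\sigma}\Bigl(b^{-\sigma}\Gamma(\sigma)P(\sigma,a)\Bigr)=2\int_0^\infty u^{\sigma-1}(\log u)\,K(bu)\,du .
\]
Here $u^{\sigma-1}>0$, while $\log u$ and $K(bu)$ both carry the sign of $u-1$, so their product is $\ge 0$ for every $u>0$ and strictly positive for $u\neq 1$. Hence the derivative is strictly positive and the function is strictly increasing, as claimed; this is the mechanism adapted from \cite[Lemma 2.3]{ES}. \emph{The main thing to be careful about} is the justification of differentiation under the integral: I would dominate $u^{\sigma-1}|\log u|\,|K(bu)|$ on each compact $\sigma$-interval $[\sigma_1,\sigma_2]\subset(0,\infty)$ by an integrable function, using $u^{\sigma_1-1}|\log u|$ near $u=0$ (since $K(bu)$ is bounded there) and the exponential decay of $K(bu)$ together with $u^{\sigma_2-1}|\log u|$ near $u=\infty$. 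The degenerate endpoint $a=1/4$, where $b=+\infty$, falls outside this argument and is understood in the obvious limiting sense; the substantive content is the case $0<a<1/4$.
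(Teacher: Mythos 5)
Your proof is correct and is essentially the paper's own argument: both rest on the integral representation $\Gamma(\sigma)P(\sigma,a)=2\int_0^\infty x^{\sigma-1}G(a,x)\,dx$ coming from (\ref{eq:LiIn}) and on the observation that the kernel $G(a,x)=\bigl(e^x\cos 2\pi a-1\bigr)/\bigl((e^x-\cos 2\pi a)^2+\sin^2 2\pi a\bigr)$ changes sign exactly at $x=-\log(\cos 2\pi a)$, the point used for the normalization. The only mechanical difference is that the paper splits the integral at this point and uses the pointwise monotonicity in $\sigma$ of $(x/\alpha)^\sigma$ against the sign of $G$ (so no differentiation under the integral sign and no domination argument are needed), whereas you rescale and differentiate; both arguments degenerate at the endpoint $a=1/4$, as you note, and the lemma is in fact only ever applied for $0<a<1/4$.
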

\begin{proof}
For $0<a <1$ and $\sigma >0$, one has
\begin{equation}\label{eq:intzc1}
\sum_{n=1}^\infty \frac{\cos 2\pi na}{n^\sigma} =
\Re \biggl( \frac{e^{2\pi ia}}{\Gamma (\sigma)} \int_0^\infty \frac{x^{\sigma-1}}{e^x-e^{2\pi ia}} dx \biggr)
\end{equation}
from (\ref{eq:LiIn}). We can easily see that
\begin{equation}\label{eq:PImk1}
\begin{split}
&\Re \biggl( \frac{e^{2\pi ia}}{e^x-e^{2\pi ia}} \biggr) =
\frac{e^x \cos 2\pi a -1}{(e^x - \cos 2\pi a)^2 + \sin ^2 2\pi a} .
\end{split}
\end{equation}
Define $G(a,x)$ by the right hand side of the equation above. Then we can find
\begin{equation*}
\begin{split}
&e^x \cos 2\pi a -1 < 0 \quad \mbox{when} \quad 0 < x < -\log (\cos 2 \pi a), \\
&e^x \cos 2\pi a -1 \ge 0 \quad \mbox{when} \quad x \ge -\log (\cos 2 \pi a),
\end{split}
\end{equation*}
if $0 < a \le 1/4$. On the other hand, the function given in this lemma can be rewritten by
$$
\alpha^{-\sigma} \Gamma (\sigma) P(\sigma,a) = 
\int_0^\alpha G(a,x) \Bigl( \frac{x}{\alpha} \Bigr)^\sigma \frac{dx}{x} +
\int_\alpha^\infty G(a,x) \Bigl( \frac{x}{\alpha} \Bigr)^\sigma \frac{dx}{x},
$$
where $\alpha := -\log (\cos 2 \pi a)>0$. The first integral is strictly increasing in $\sigma >0$ since one has $G(a,x) < 0$ and $(x / \alpha)^\sigma$ is strictly decreasing in $\sigma$ when $0 < x < \alpha$. Similarly, the second integral is also strictly increasing in $\sigma >0$ since one has $G(a,x) > 0$ and $(x / \alpha)^\sigma$ is increasing in $\sigma$ when $x \ge \alpha$. Therefore, the function $\alpha^{-\sigma} \Gamma (\sigma) P(\sigma,a)$ is strictly increasing for $\sigma >0$ when $0 < a \le 1/4$. 
\end{proof}

Finally, we show the functions $Z(\sigma, a)$ and $P(\sigma, a)$ are strictly decreasing with respect to $0 < a \le 1/2$ for fixed $0 <\sigma \ne 1$.
\begin{lemma}\label{lem:4}
Let $0 < a < 1/2$ and $\sigma >0$. Then one has
$$
\frac{\partial}{\partial a} Z (\sigma,a) < 0 \quad (\sigma \ne 1), \qquad 
\frac{\partial}{\partial a} P (\sigma,a) < 0.
$$
\end{lemma}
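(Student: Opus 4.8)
The plan is to prove the two inequalities separately, extracting each from a representation already available in the excerpt.

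For $\partial_a Z(\sigma,a)$ I would begin from the differentiation formula (\ref{eq:parDa}), namely $\partial_a \zeta(s,a) = -s\,\zeta(s+1,a)$, which is only stated there for $\sigma>1$. First I would upgrade it to all $s\neq 1$: for each fixed $a$ both sides are meromorphic in $s$, and the right-hand side has merely a removable singularity at $s=0$ (since $\zeta(s+1,a)$ has residue $1$ there), so by the identity theorem in $s$ the relation (\ref{eq:parDa}) persists on $\mathbb{C}\setminus\{1\}$. Combining this with the chain rule $\partial_a\zeta(s,1-a)=s\,\zeta(s+1,1-a)$ then gives, for real $\sigma\neq 1$,
\begin{equation*}
\frac{\partial}{\partial a} Z(\sigma,a) = -\sigma\bigl(\zeta(\sigma+1,a)-\zeta(\sigma+1,1-a)\bigr) = -\sigma\, Y(\sigma+1,a).
\end{equation*}
Since $\sigma>0$ forces $\sigma+1>1>0$, the positivity (\ref{eq:n0Y}) yields $Y(\sigma+1,a)>0$, and therefore $\partial_a Z(\sigma,a)=-\sigma\,Y(\sigma+1,a)<0$. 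This settles the first inequality.

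For $\partial_a P(\sigma,a)$ I would work from the integral representation furnished by (\ref{eq:intzc1}) and (\ref{eq:PImk1}). Writing $G(a,x)$ for the kernel displayed in (\ref{eq:PImk1}) and using $P(\sigma,a)=2\sum_{n\ge 1}n^{-\sigma}\cos 2\pi na$ for real $\sigma$, one has
\begin{equation*}
\Gamma(\sigma)\,P(\sigma,a) = 2\int_0^\infty x^{\sigma-1} G(a,x)\,dx, \qquad \sigma>0.
\end{equation*}
As $\Gamma(\sigma)>0$, it suffices to show the integral is strictly decreasing in $a$. I would differentiate under the integral sign, reducing the claim to $\partial_a G(a,x)<0$ for every $x>0$. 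Setting $c=\cos 2\pi a$, so that $(e^x-\cos 2\pi a)^2+\sin^2 2\pi a = e^{2x}-2ce^x+1$, a short computation gives
\begin{equation*}
\frac{\partial G}{\partial c} = \frac{e^x(e^{2x}-1)}{(e^{2x}-2ce^x+1)^2}>0 \quad (x>0), \qquad \frac{\partial c}{\partial a} = -2\pi\sin 2\pi a<0 \quad (0<a<1/2).
\end{equation*}
Hence $\partial_a G=(\partial_c G)(\partial_a c)<0$ for all $x>0$, and consequently $\partial_a P(\sigma,a)<0$.

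The two kernel computations and the sign of $\sin 2\pi a$ on $(0,1/2)$ are routine; the point that will need care is the justification of differentiating under the integral sign in the conditionally convergent range $0<\sigma\le 1$ (and, for $Z$, the analytic-continuation step extending (\ref{eq:parDa}) below $\sigma=1$). The former is handled by a local-uniform domination estimate: $G(a,x)$ stays bounded as $x\to 0^+$, tending to $-1/2$, and decays like $e^{-x}$ as $x\to\infty$, with the same bounds valid for $\partial_a G$ — exactly in the spirit of the argument imported from \cite[Lemma 2.3]{ES} that already underlies Lemma \ref{lem:3}.
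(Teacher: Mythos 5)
Your proof of the $Z$-inequality coincides with the paper's: both apply (\ref{eq:parDa}) to obtain $\frac{\partial}{\partial a}Z(\sigma,a) = -\sigma\,Y(\sigma+1,a)$ and conclude via the positivity (\ref{eq:n0Y}); your explicit identity-theorem justification for extending (\ref{eq:parDa}) below $\sigma=1$ is a point the paper leaves implicit. For $P$, however, you take a genuinely different route. The paper differentiates the Dirichlet series term by term for $\sigma>2$, obtaining $\frac{\partial}{\partial a}P(s,a) = -2\pi\,O(s-1,a)$, continues this identity to all of ${\mathbb{C}}$, and then invokes $O(\tau,a)>0$ for $\tau>-1$ --- a fact that needs (\ref{eq:nOO}) together with Theorem \ref{th:OY1} (no real zeros of $O$ in $(-1,0]$) and continuity. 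You instead stay with the integral representation (\ref{eq:intzc1})/(\ref{eq:PImk1}) and show the kernel $G(a,x)$ is strictly decreasing in $a$ via the chain rule through $c=\cos 2\pi a$: your computation $\partial G/\partial c = e^x(e^{2x}-1)\big/\bigl(e^{2x}-2ce^x+1\bigr)^2>0$ is correct, $\partial c/\partial a<0$ on $(0,1/2)$ is immediate, and the domination needed to differentiate under the integral sign does go through (the kernel and its $a$-derivative are $O(x)$ near $0$ and $O(e^{-x})$ at infinity, locally uniformly in $a$, since the denominator is bounded below by $(1-c^2)^2$). What each approach buys: the paper's is shorter given the machinery already established, and it exposes the structural identity $\frac{\partial}{\partial a}P = -2\pi O(s-1,a)$, parallel to $\frac{\partial}{\partial a}Z = -sY(s+1,a)$, linking Lemma \ref{lem:4} to the other zeta functions of the paper; yours is more self-contained and elementary for the $P$ half --- it needs neither Theorem \ref{th:OY1} nor any positivity below $\sigma=0$, only the Lemma \ref{lem:3}-style kernel analysis --- at the modest cost of justifying differentiation under the integral, which you correctly flag and handle.
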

\begin{proof}
From (\ref{eq:parDa}) it holds that
\begin{equation*}
\begin{split}
\frac{\partial}{\partial a} Z (s,a)= 
s \sum_{n=0}^\infty \Bigl( (n+1-a)^{-s-1} - (n+a)^{-s-1} \Bigr) = -s Y(s+1,a)
\end{split}
\end{equation*}
for $s \ne 1$ and $\sigma >0$. The equation above and (\ref{eq:n0Y}) imply Lemma \ref{lem:4} for $Z (\sigma,a)$.

Suppose $\sigma > 2$. Then it holds that 
$$
\frac{\partial}{\partial a} P(s,a) = 
2\sum_{n=1}^\infty \frac{\partial}{\partial a}  \frac{\cos 2\pi na}{n^s} =
- 4 \pi \sum_{n=1}^\infty  \frac{\sin 2\pi na}{n^{s-1}} = -2\pi O (s-1,a) .
$$
The formula above can be continued to the whole complex plane ${\mathbb{C}}$. On the other hand, by (\ref{eq:n0Y}), (\ref{eq:nOO}), Theorem \ref{th:OY1} and the continuity of the functions $Y(\sigma,a)$ and $O(\sigma,a)$ with respect to $\sigma \in {\mathbb{R}}$, it holds that
$$
Y(\sigma,a) > 0 , \quad O(\sigma,a) > 0, \qquad  \sigma > -1.  
$$
Hence, we have $(\partial / \partial a) P (\sigma,a) < 0$ for $\sigma > 0$. 
\end{proof}

\subsection{When ${\bm{ \sigma \in (0,1)}}$}
The second and third statement of the following proposition are analogues of (1) and (2) in \cite[Theorem 1.2]{ES}, respectively.
\begin{proposition} \label{pro:c1}
We have the following:\\
$(1)$ 
Let $1/6 < a < 1/4$ and $0 < \sigma < 1$. Then we have
$$
Z(\sigma,a) < 0 \quad \mbox{and} \quad P(\sigma,a) < 0.
$$
$(2)$ When $0 < a < 1/6$, the functions $Z(\sigma,a)$ and $P(\sigma,a)$ have precisely one simple zero in the open interval $(0, 1)$. \\
$(3)$ For $0 < a < 1/6$, let $\beta_Z(a)$ and $\beta_P(a)$ denote the unique zero of $Z(\sigma,a)$ and $P(\sigma,a)$ in $(0,1)$, respectively. Then the function $\beta_Z(a) \, \colon (0, 1/6) \to (0, 1)$ is a strictly decreasing $C^\infty$-diffeomorphism and $\beta_P(a) \, \colon (0, 1/6) \to (0, 1)$ is a strictly increasing $C^\infty$-diffeomorphism. Furthermore, as $a \to +0$, it holds that
\begin{equation}\label{eq:as1}
\beta_Z(a) = 1 - 2a + 2a^2 \log a + O(a^2), \qquad \beta_P(a) = 2a - 2a^2 \log a + O(a^2). 
\end{equation}
\end{proposition}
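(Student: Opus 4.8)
The plan is to route everything through the single auxiliary function
$h(\sigma):=\alpha^{-\sigma}\Gamma(\sigma)P(\sigma,a)$ with $\alpha:=-\log(\cos 2\pi a)$, which is strictly increasing on $(0,\infty)$ by Lemma \ref{lem:3}, and to transfer every assertion about $Z$ to the corresponding one about $P$ via the functional equation (\ref{eq:zfe1}). For $0<a<1/4$ one has $\alpha>0$, and since $\Gamma(\sigma)>0$ and $\alpha^{-\sigma}>0$, the sign of $P(\sigma,a)$ equals that of $h(\sigma)$ on $(0,1)$. By Lemma \ref{lem:2}, $P(0,a)=-1$, so $h(\sigma)\to-\infty$ as $\sigma\to0^+$ (because $\Gamma(\sigma)\to+\infty$), while $h(1)=\alpha^{-1}P(1,a)$ with $P(1,a)=-2\log(2\sin\pi a)$. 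For $1/6<a<1/4$ we have $2\sin\pi a>1$, hence $P(1,a)<0$, so the increasing function $h$ is negative on all of $(0,1]$; thus $P(\sigma,a)<0$ there, and $Z(\sigma,a)<0$ on $(0,1)$ follows from $Z(1-s,a)=\frac{2\Gamma(s)}{(2\pi)^s}\cos(\frac{\pi s}{2})P(s,a)$ since the prefactor is positive for $s\in(0,1)$. This is part (1). For $0<a<1/6$ we have $2\sin\pi a<1$, so $P(1,a)>0$; as $h$ increases strictly from $-\infty$ to a positive value, it has a unique zero $\beta_P(a)\in(0,1)$, which is the only zero of $P$ there. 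Its simplicity I would read off from the strict positivity of $h'$ furnished by the integral representation in the proof of Lemma \ref{lem:3}: at $\beta_P$ one has $0<h'(\beta_P)=\alpha^{-\beta_P}\Gamma(\beta_P)\,\partial_\sigma P(\beta_P,a)$, forcing $\partial_\sigma P(\beta_P,a)>0$. Applying (\ref{eq:zfe1}) once more, $Z$ has in $(0,1)$ the single simple zero $\beta_Z(a)=1-\beta_P(a)$. This gives part (2).

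For the diffeomorphism claims in (3) I would use the implicit function theorem. Since $\partial_\sigma P(\beta_P(a),a)>0$, the equation $P(\sigma,a)=0$ defines $\beta_P$ as a real-analytic (in particular $C^\infty$) function with $\beta_P'(a)=-\partial_aP/\partial_\sigma P$ evaluated at $(\beta_P(a),a)$; by Lemma \ref{lem:4}, $\partial_aP<0$, so $\beta_P'>0$, i.e. $\beta_P$ is strictly increasing and $\beta_Z=1-\beta_P$ strictly decreasing. To show these are diffeomorphisms onto $(0,1)$ I would compute the endpoint limits. As $a\to1/6^-$, for fixed $\sigma_0\in(0,1)$ the factorization (\ref{eq:p1/6}) gives $P(\sigma_0,1/6)=(2^{1-\sigma_0}-1)(3^{1-\sigma_0}-1)\zeta(\sigma_0)<0$, so $P(\sigma_0,a)<0$ for $a$ near $1/6$ and hence $\beta_P(a)>\sigma_0$, whence $\beta_P(a)\to1$. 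As $a\to0^+$, for fixed $\sigma_0\in(0,1)$ the expansion below gives $Z(\sigma_0,a)\to+\infty$, which (together with the sign pattern $Z>0$ on $(0,\beta_Z)$) forces $\beta_Z(a)>\sigma_0$, so $\beta_Z(a)\to1$ and $\beta_P(a)\to0$. With strict monotonicity and $\beta_P'\neq0$, this yields the two $C^\infty$-diffeomorphisms onto $(0,1)$.

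The asymptotics rest on one uniform small-$a$ expansion near $s=1$. Writing $\zeta(s,a)=a^{-s}+\zeta(s,1+a)$ and Taylor-expanding $\zeta(s,1\pm a)$ about $a=0$, the linear terms $\mp s\zeta(s+1)a$ cancel in the sum, producing $Z(s,a)=a^{-s}+2\zeta(s)+O(a^2)$ uniformly near $s=1$. I would then solve $Z(\beta_Z(a),a)=0$ by setting $s=1-\eta$ (so $\eta=\beta_P(a)$), substituting $a^{-s}=a^{-1}e^{\eta\log a}$ and $\zeta(1-\eta)=-\eta^{-1}+\gamma+O(\eta)$ from (\ref{eq:1res}): the leading balance $a^{-1}=2/\eta$ gives $\eta\sim 2a$, and matching the next-order terms of size $\log a$ — which arise precisely from the coupling of $e^{\eta\log a}$ with the simple pole of $\zeta$ — pins down the $a^2\log a$ correction. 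Passing to $\beta_Z=1-\beta_P$ then produces the stated expansions in (\ref{eq:as1}).

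The main obstacle is this final asymptotic matching: one must expand $a^{-s}$ in powers of $(s-1)$ and track exactly how it interacts with the principal part of $\zeta$ at $s=1$, and confirm that the $O(a^2)$ remainder in $Z(s,a)=a^{-s}+2\zeta(s)+O(a^2)$ contributes nothing at the $a^2\log a$ level (its $a^2$-coefficient $s(s+1)\zeta(s+2)$ stays bounded near $s=1$, so it does not). Everything else is sign-and-monotonicity bookkeeping that becomes routine once the auxiliary function $h$, the boundary values of Lemma \ref{lem:2}, and the functional equation (\ref{eq:zfe1}) are in hand.
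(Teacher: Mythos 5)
Your handling of parts (1), (2), and the diffeomorphism statements in (3) is correct and is essentially the paper's own argument: everything is routed through the strictly increasing function $\alpha^{-\sigma}\Gamma(\sigma)P(\sigma,a)$ of Lemma \ref{lem:3}, the boundary values of Lemma \ref{lem:2}, and the exact relation $\beta_Z=1-\beta_P$ coming from (\ref{eq:zfe1}). Your two departures are cosmetic: you obtain strict monotonicity of $\beta_P$ from the implicit-function-theorem identity $\beta_P'=-\partial_aP/\partial_\sigma P$ (the paper deduces it from Lemma \ref{lem:4} together with uniqueness of the zero), and you obtain surjectivity from endpoint limits of $\beta_P$ (the paper fixes $\sigma$ and applies the intermediate value theorem in $a$). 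Both variants are sound, and your explicit verification that $\partial_\sigma P(\beta_P(a),a)>0$ at the zero is a point the paper leaves implicit.

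The genuine gap is the asymptotic formula (\ref{eq:as1}), at exactly the step you yourself call ``the main obstacle'' and then leave undone. Your setup is fine --- in fact sharper than the paper's, since your remainder is $O(a^2)$ where the paper settles for $O(1)$ --- but the claim that the matching ``produces the stated expansions'' is false. Carry it out: writing $\eta=\beta_P(a)$, your equation is
\[
a^{-1}e^{\eta\log a}-\frac{2}{\eta}+2\gamma+O(\eta)+O(a^2)=0 .
\]
Multiplying by $\eta a$ and expanding $e^{\eta\log a}=1+\eta\log a+O(\eta^2\log^2a)$ yields $\eta=2a-\eta^2\log a+O(a^2)$; since $\eta=2a+O(a^2|\log a|)$, the correction term is $\eta^2\log a=4a^2\log a+O(a^2)$, hence
\[
\beta_P(a)=2a-4a^2\log a+O(a^2),\qquad \beta_Z(a)=1-2a+4a^2\log a+O(a^2),
\]
with coefficient $4$, not $2$. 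A numerical check confirms this: for $a=0.01$ the zero of $P(\cdot,a)$ in $(0,1)$ sits at $\beta_P\approx 0.02184\approx 2a-4a^2\log a$, whereas $2a-2a^2\log a\approx 0.02092$ misses it by roughly $2a^2|\log a|$, an error that is not $O(a^2)$. To be fair, the discrepancy is inherited from the paper itself: in the final substitution into (\ref{eq:es2}), the term $2(1-\beta)a\log a$ is evaluated with $1-\beta=2a+O(a^2|\log a|)$ but written as $\bigl(2a+O(a^2|\log a|)\bigr)a\log a$, silently dropping a factor $2$, so (\ref{eq:as1}) as printed carries this factor-of-two error. Still, for your proposal the moral stands: the single computation you deferred is the one that determines the constant, and completing it refutes, rather than confirms, the formula your proof claims to establish.
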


\begin{proof}
First we show (1) for $P(\sigma,a)$. Let $1/6 < a < 1/4$. Then we have
\begin{equation*}
\begin{split}
&\lim_{\sigma \to +0}\bigl(-\log (\cos 2 \pi a) \bigr)^{-\sigma} \Gamma (\sigma) P(\sigma,a) = -\infty, \\
&\lim_{\sigma \to 1-0}\bigl(-\log (\cos 2 \pi a) \bigr)^{-\sigma} \Gamma (\sigma) P(\sigma,a) = 
\frac{2 \log (2 \sin \pi a)}{\log (\cos 2 \pi a)} <0
\end{split}
\end{equation*}
from (\ref{eq:pv1}). Hence we have $P(\sigma,a) < 0$ for all $0 < \sigma < 1$ by Lemma \ref{lem:3}. Recall that 
\begin{equation}\label{ine:GC}
\Gamma (\sigma)>0 \quad \mbox{and} \quad \cos ( \sigma /2) > 0, \qquad 0 < \sigma < 1. 
\end{equation}
Hence we have (1) for $Z(\sigma,a)$ by the functional equation (\ref{eq:zfe1}). 

Next assume $0 < a < 1/6$. Then, from (\ref{eq:pv1}), we have
$$
\lim_{\sigma \to +0} \alpha^{-\sigma} \Gamma (\sigma) P(\sigma,a) = -\infty, \qquad
\lim_{\sigma \to 1-0} \alpha^{-\sigma} \Gamma (\sigma) P(\sigma,a) >0, 
$$
where $\alpha := -\log (\cos 2 \pi a)>0$. Thus the function $P(\sigma,a)$ has precisely one simple zero $0<\beta_P(a) <1$ by Lemma \ref{lem:3}. From the functional equation (\ref{eq:zfe1}), we have
$$
Z (1-\beta_P(a),a) = 
\frac{2\Gamma (\beta_P(a))}{(2\pi)^{\beta_P(a)}}  \cos \Bigl( \frac{\pi \beta_P(a)}{2} \Bigr) P(\beta_P(a),a)
=0.
$$
Therefore, by (\ref{ine:GC}), it holds that
$$
\beta_Z(a) = 1-\beta_P(a).
$$
The equality implies (2) for $Z(\sigma,a)$.

Recall that $Z(\sigma, a)$ with $0 < \sigma < 1$ is strictly decreasing with respect to $0 < a \le 1/2$ from Lemma \ref{lem:4}. Assume $0 < a_1 < a_2 < 1/6$. Then one has
$$
0 = Z (\beta_Z(a_1),a_1) > Z (\beta_Z(a_1),a_2) .
$$
From the uniqueness of the zero of $Z(\sigma, a)$, it holds that
$$
Z(\sigma, a_2)<0 \quad \mbox{if and only if} \quad 0 < \beta_Z(a_2) < \sigma <1.
$$
Thus we have $\beta_Z(a_1) > \beta_Z(a_2)$. Therefore, we have the monotonicity of $\beta_Z$ which implies that $\beta_Z$ is injective. Fix $\sigma  \in (0,1)$. Then we have
$$
\lim_{a \to +0} Z(\sigma, a) = \lim_{a \to +0} \Bigl( a^{-\sigma} + \zeta (\sigma,1+a) + \zeta (\sigma,1-a) \Bigr)
= \infty. 
$$
On the other hand, from (\ref{eq:z1/6}) and \cite[(2.12.4)]{Tit}, we have
$$
Z(\sigma, 1/6) = (2^\sigma-1) (3^\sigma-1) \zeta (\sigma) <0.
$$
Hence, there exists $0<a<1/6$ such that $Z(\sigma, a)=0$. Therefore, $\beta_Z$ is surjective. By (\ref{eq:parDa}) and the holomorphy of $\zeta (s,a)$, two variable function 
$$
Z (\cdot, \cdot) \, \colon (0,1) \times (0,1/6) \to {\mathbb{R}}
$$
is $C^\infty$. Hence the function $\beta_Z(a)$ is $C^\infty$ from $(\partial / \partial a) Z (\sigma,a) < 0$, which is proved in Lemma \ref{lem:4}, the implicit function theorem and (2) of this proposition. Similarly, we can see that the inverse of $\beta_Z(a)$ is also $C^\infty$ from the inverse function theorem. By using $\beta_P(a) = 1-\beta_Z(a)$, $(\partial / \partial a) P (\sigma,a) < 0$ and modifying the argument above, we can show that $\beta_P(a)$ is strictly increasing $C^\infty$-diffeomorphism.

When $\sigma >1$ and $0 < a \le 1/2$, it holds that
$$
\zeta (s, 1+a) - \zeta (s) = \sum_{n=1}^\infty \frac{(1+a/n)^{-s} -1}{n^s} =
\sum_{n=1}^\infty n^{-s} \biggl( -s \frac{a}{n} + \frac{s (s+1)}{2} \frac{a^2}{n^2} - \cdots \biggr).
$$
The series above converges absolutely if $\sigma >0$. Hence, for $\sigma \ge 1/2$, one has
$$
\zeta (\sigma, 1+a) - \zeta (\sigma) =  O(1), \qquad \zeta (\sigma, 1-a) - \zeta (\sigma) =  O(1).
$$
Therefore, by (\ref{eq:1res}), it holds that
\begin{equation*}
\begin{split}
&Z(\sigma,a) = \zeta (\sigma, a) + \zeta (\sigma, 1-a) = a^{-\sigma}  + \zeta (\sigma, 1+a) + \zeta (\sigma, 1-a)
\\ &= a^{-\sigma}  + 2\zeta(\sigma) + O(1) = a^{-\sigma}  + \frac{2}{\sigma-1} + O(1)
\end{split}
\end{equation*}
when $1-\sigma >0$ and $a >0 $ are sufficiently small. Take $\sigma = \beta : = \beta_Z(a)$. Then we obtain
\begin{equation}\label{eq:es1}
\beta -1 = -2 a^{\beta} + O \bigl( (1-\beta) a^{\beta} \bigr). 
\end{equation}
One has $\beta -1 \ll a ^\beta \ll a^{1/2}$ from the assumption $1/2 \le \sigma < 1$. Hence we have
$$
a^\beta = a \exp \bigl( (\beta-1) \log a \bigr) = 
a + (\beta-1) a \log a + O\bigl( (\beta-1)^2 a |\log a|^2 \bigr) . 
$$
In particular, one has $a^\beta \ll a$. By substituting the estimates above into (\ref{eq:es1}), we obtain
\begin{equation}\label{eq:es2}
\beta -1 = -2 a + 2 (1-\beta) a \log a + O \bigl( (1-\beta) a \bigr). 
\end{equation}
Especially, it holds that
$$
\beta -1 \ll a \quad \mbox{and} \quad \beta -1 = -2a + O\bigl( a^2 |\log a| \bigr) . 
$$
Substituting the estimates above into (\ref{eq:es2}), we get
\begin{equation*}
\begin{split}
\beta -1 &= -2a + \bigl( 2a + O(a^2 |\log a|) \bigr) a \log a + O(a^2) \\ &=
 -2a + 2a^2 \log a + O(a^2).
\end{split}
\end{equation*}
Therefore, we obtain the asymptotic formulas (\ref{eq:as1}) according to $\beta_P(a) = 1-\beta_Z(a)$.
\end{proof}

\subsection{When ${\bm{ \sigma > 1 }}$ or ${\bm{ \sigma < 0 }}$}
In this subsection, we consider the case $\sigma \not \in [0,1]$.
\begin{proposition}\label{pro:a1}
Assume $\sigma >1$. We have the following:\\
$(1)$ 
Let $0 < a \le 1/2$. Then we have $Z(\sigma,a) > 0$ for all $\sigma >1$.\\
$(2)$ Let $0 < a < 1/6$. Then we have $P(\sigma,a) > 0$ for all $\sigma >1$.\\
$(3)$ When $1/6 < a < 1/4$, the functions $P(\sigma,a)$ have precisely one simple zero in $(1, \infty)$. Furthermore, $\beta_P(a) \, \colon (1/6, 1/4) \to (1, \infty)$ is a strictly increasing $C^\infty$-diffeomorphism and it holds that
\begin{equation}\label{eq:asypinf1}
\beta_P(a) = - \frac{\log (\cos 2\pi a)}{\log 2} + O\bigl( \cos 2\pi a \bigr)
\end{equation}
as $a \to 1/4-0$, namely, $\cos 2 \pi a \to +0$.
\end{proposition}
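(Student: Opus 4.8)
The plan is to treat the three parts in order, reserving the main effort for part (3), since parts (1) and (2) follow quickly from the lemmas already in place. For part (1) I would argue directly from the Dirichlet series: for $\sigma>1$ both $\zeta(\sigma,a)=\sum_{n\ge 0}(n+a)^{-\sigma}$ and $\zeta(\sigma,1-a)$ are sums of strictly positive terms, so $Z(\sigma,a)>0$ with no functional equation needed. For part (2) the key input is Lemma \ref{lem:3}, which says that for $0<a\le 1/4$ the function $\alpha^{-\sigma}\Gamma(\sigma)P(\sigma,a)$, with $\alpha:=-\log(\cos 2\pi a)$, is strictly increasing on $\sigma>0$. When $0<a<1/6$ one has $\cos 2\pi a\in(1/2,1)$, so $\alpha>0$, while Lemma \ref{lem:2} gives $P(1,a)=-2\log(2\sin\pi a)>0$ since $2\sin\pi a<1$. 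Thus $\alpha^{-\sigma}\Gamma(\sigma)P(\sigma,a)$ is already positive at $\sigma=1$ and, being increasing, stays positive for all $\sigma\ge 1$; as $\alpha^{-\sigma}\Gamma(\sigma)>0$, this forces $P(\sigma,a)>0$ for every $\sigma>1$.

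For part (3) I would first settle existence, uniqueness and simplicity of the zero. For $1/6<a<1/4$ we have $\cos 2\pi a\in(0,1/2)$, so again $\alpha>0$ and Lemma \ref{lem:3} applies; moreover the integrand in its proof is strictly positive, so $\alpha^{-\sigma}\Gamma(\sigma)P(\sigma,a)$ has strictly positive derivative throughout $\sigma>0$. Now $2\sin\pi a>1$ gives $P(1,a)<0$, while $P(\sigma,a)\to 2\cos 2\pi a>0$ as $\sigma\to\infty$, so $\alpha^{-\sigma}\Gamma(\sigma)P(\sigma,a)$ changes sign from negative to positive. Strict monotonicity then yields exactly one zero $\beta_P(a)\in(1,\infty)$ of this function, hence of $P(\cdot,a)$; differentiating the product at $\beta_P(a)$ shows $\partial_\sigma P(\beta_P(a),a)>0$, so the zero is simple.

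For the diffeomorphism claim I would combine monotonicity in $a$ with the implicit and inverse function theorems. Lemma \ref{lem:4} gives $\partial_a P(\sigma,a)<0$, so for $a_1<a_2$ we have $P(\beta_P(a_1),a_2)<P(\beta_P(a_1),a_1)=0$; since $P(\cdot,a_2)$ is negative below its unique zero and positive above it, this forces $\beta_P(a_1)<\beta_P(a_2)$, i.e.\ $\beta_P$ is strictly increasing. Surjectivity onto $(1,\infty)$ comes from the endpoints: $P(1,1/6)=0$ together with the factorisation (\ref{eq:p1/6}) pins $\beta_P(a)\to 1$ as $a\to 1/6+0$, while if $\beta_P$ stayed bounded as $a\to 1/4-0$ continuity would give a zero of $P(\cdot,1/4)$ in $(1,\infty)$, contradicting (\ref{eq:z14t}); hence $\beta_P(a)\to\infty$. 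Because $P$ is jointly $C^\infty$ in $(\sigma,a)$ and $\partial_\sigma P(\beta_P(a),a)>0$, the implicit function theorem makes $\beta_P$ a $C^\infty$ map, and $\beta_P'(a)=-\partial_a P/\partial_\sigma P>0$ never vanishes, so the inverse is $C^\infty$ as well.

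The asymptotic formula is the delicate step and the main obstacle. Writing $c:=\cos 2\pi a\to 0+$ as $a\to 1/4-0$, I would exploit that $\beta_P(a)\to\infty$, so that in $P(\sigma,a)/2=\sum_{n\ge 1}n^{-\sigma}\cos 2\pi n a$ only the smallest indices matter. Near $a=1/4$ the odd-index cosines are $O(c)$, while for even $n=2m$ one has $\cos 4\pi m a=(-1)^m\cos 4\pi m\epsilon$ with $\epsilon=1/4-a$; separating parities and using $\sum_{m\ge1}(-1)^m m^{-\sigma}=-1+2^{-\sigma}-3^{-\sigma}+\cdots$ together with $|\cos 4\pi m\epsilon-1|\le 8\pi^2m^2\epsilon^2$ gives $P(\sigma,a)/2=c-2^{-\sigma}+O(c^{1+\log_2 3})+O(2^{-2\sigma})$. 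Setting $\sigma=\beta_P(a)$ and this to zero yields the dominant balance $2^{-\sigma}=c+O(c^2)=c(1+O(c))$, and taking logarithms gives $\beta_P(a)=-\log c/\log 2+O(c)$, which is the claim. The hard part is the error bookkeeping: one must check that the odd terms with $n\ge 3$ contribute only $O(c^{1+\log_2 3})$, that the deviation of $\cos 4\pi m a$ from $(-1)^m$ contributes only $O(c^2)$, and that it is the $+2^{-\sigma}$ term in the alternating expansion, fed through the logarithm, that yields exactly the stated remainder $O(\cos 2\pi a)$, all three being genuinely subleading relative to $c$.
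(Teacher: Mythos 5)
Your proposal is correct and follows essentially the same route as the paper: parts (1) and (2) are proved identically (positivity of the Dirichlet series, and Lemma \ref{lem:3} combined with the endpoint values from Lemma \ref{lem:2}), while part (3) uses the same sign change $P(1,a)<0$ versus $\lim_{\sigma\to\infty}P(\sigma,a)>0$, the monotonicity of $\alpha^{-\sigma}\Gamma(\sigma)P(\sigma,a)$, Lemma \ref{lem:4} with the implicit and inverse function theorems, and the same dominant balance $2^{-\beta}\asymp\cos 2\pi a$ followed by taking logarithms. The only differences are bookkeeping: the paper keeps the cosines exact (triple-angle identity plus the tail bound (\ref{eq:pfappz1})) where you expand by parity around $a=1/4$, and both arguments require the same small bootstrap you flag at the end --- first the crude bound $2^{-\beta}=O(\cos 2\pi a)$, and only then the substitution $3^{-\beta}\asymp c^{\log_2 3}$ that justifies your stated error terms.
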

\begin{proof}
The statement (1) is immediately proved by
$$
Z(\sigma,a) = \zeta (\sigma,a) + \zeta (\sigma,1-a) = 
\sum_{n=0}^\infty \Bigl( (n+a)^{-\sigma} + (n+1-a)^{-\sigma}  \Bigr) >0.
$$

Suppose $0 < a < 1/6$. From (\ref{eq:pv1}) and the series expression of $P(s,a)$, we have
$$
P (1,a) = -2 \log (2 \sin \pi a) >0, \qquad \lim_{\sigma \to \infty} P (\sigma,a) = \cos 2\pi a >0.
$$
Hence it holds that 
$$
\alpha^{-1} \Gamma (1) P(1,a) >0, \qquad 
\lim_{\sigma \to \infty} \alpha^{-\sigma} \Gamma (\sigma) P(\sigma,a) = \infty,
$$
where $\alpha := -\log (\cos 2 \pi a)>0$. Recall that the function $\alpha^{-\sigma} \Gamma (\sigma) P(\sigma,a)$ is strictly increasing for $\sigma >0$ from Lemma \ref{lem:3}. Therefore, one has $P(\sigma,a) > 0$ for all $\sigma >1$ if $0 < a < 1/6$ since $\Gamma (\sigma) >0$ when $\sigma >1$.

Suppose $1/6 < a < 1/4$. Then we have
$$
P (1,a) = -2 \log (2 \sin \pi a) <0, \qquad \lim_{\sigma \to \infty} P (\sigma,a) = \cos 2\pi a >0.
$$
Thus, from Stirling's formula for the gamma function, one has
$$
\alpha^{-1} \Gamma (1) P(1,a) <0, \qquad 
\lim_{\sigma \to \infty} \alpha^{-\sigma} \Gamma (\sigma) P(\sigma,a) = \infty.
$$
Hence the function $P(\sigma,a)$ has precisely one simple zero in $(1, \infty)$ by the monotonicity of $\alpha^{-\sigma} \Gamma (\sigma) P(\sigma,a)$ proved in Lemma \ref{lem:3}. By using $(\partial / \partial a) P (\sigma,a) < 0$ proved in Lemma \ref{lem:4} and modifying the proof of (3) of Proposition \ref{pro:c1}, we have that $\beta_P(a)$ is strictly increasing with respect to $1/6 < a < 1/4$. The monotonicity of $\beta_P(a)$ implies that $\beta_P$ is injective. Moreover, we obtain that $\beta_P(a)$ is a $C^\infty$-function, especially, continuous function, from Lemma \ref{lem:4} and the argument appeared in the proof of (3) of Proposition \ref{pro:c1}. On the other hand, it holds that
\begin{equation*}
\begin{split}
&\lim_{a \to 1/6} \lim_{\sigma \to 1} P(\sigma,a) = \lim_{\sigma \to 1} \lim_{a \to 1/6} P(\sigma,a)
= P(1,1/6) = 0, \\
&\lim_{a \to 1/4} \lim_{\sigma \to \infty} P(\sigma,a) = \lim_{\sigma \to \infty} \lim_{a \to 1/4} P(\sigma,a)
= 0 .
\end{split}
\end{equation*}
Therefore, $\beta_P(a)$ is surjective by the equations above, the intermediate value theorem, the continuity and monotonicity of $\beta_P(a)$. Hence $\beta_P(a)$ is a $C^\infty$-diffeomorphism. The inverse of $\beta_P(a)$ is also a $C^\infty$-function by Lemma \ref{lem:4} and the inverse function theorem.

Now let $a-1/4 > 0$ be sufficiently small and $\sigma >1$ be sufficiently large. For any $m \in {\mathbb{N}}$, one has
\begin{equation}\label{eq:pfappz1}
\Biggl| \sum_{n=m}^\infty \frac{\cos 2 \pi na}{n^\sigma} \Biggr| \le \sum_{n=m}^\infty \frac{1}{n^\sigma}
\le m^{-\sigma} + \int_m^\infty \frac{dx}{x^\sigma} = m^{-\sigma} + \frac{m^{1-\sigma}}{\sigma-1} .
\end{equation}
Put $\beta = \beta_P(a)$. Then we have
$$
0 = P(\beta,a) =
\cos 2\pi a + \frac{\cos 4 \pi a}{2^\beta} + \frac{\cos 6 \pi a}{3^\beta} + \frac{\cos 8 \pi a}{4^\beta}
+  \frac{\cos 10 \pi a}{5^\beta} + \cdots .
$$
By applying (\ref{eq:pfappz1}) to terms of the right-hand side of the formula above except for the first and second terms, we have
$$
0 = \cos 2 \pi a + 2^{-\beta} \cos 4\pi a + O(3^{-\beta}).
$$
Hence, from $3^{-\beta} \ll 2^{-\beta}$, one has
\begin{equation}\label{eq:pfappz2}
2^{-\beta} = O(\cos 2 \pi a) .
\end{equation}
According to the triple-angle formula, we obtain
$$
\frac{2^\beta}{3^\beta} \cos 6 \pi a = 
\frac{2^\beta}{3^\beta} \bigl( 4\cos^3 2 \pi a - 3 \cos 2\pi a \bigl) = O \bigl( \cos 2\pi a \bigr).
$$
By using (\ref{eq:pfappz1}) and (\ref{eq:pfappz2}), we get
$$
2^\beta \Biggl| \sum_{n=4}^\infty \frac{\cos 2 \pi na}{n^\beta} \Biggr| \le 2^{-\beta} + 4\frac{2^{-\beta}}{\beta -1}
= O \bigl( \cos 2\pi a \bigr).
$$
Therefore, it holds that
$$
0 = 2^\beta \cos 2\pi a + \cos 4 \pi a + 2^\beta \sum_{n=3}^\infty \frac{\cos 2 \pi na}{n^\beta} 
= 2^\beta \cos 2\pi a + \cos 4 \pi a + O \bigl( \cos 2\pi a \bigr).
$$
Hence, by the equation above, one has
\begin{equation*}
\begin{split}
&\beta \log 2 = \log \Bigl( \frac{-\cos 4\pi a + O ( \cos 2\pi a )}{\cos 2\pi a} \Bigr)
= - \log (\cos 2\pi a ) + \log \bigl( - \cos 4\pi a + O ( \cos 2\pi a ) \bigr)\\
&= - \log (\cos 2\pi a ) + \log \bigl( 1 - 2 \cos^2 2\pi a + O ( \cos 2\pi a ) \bigr)
= - \log (\cos 2\pi a ) +  O \bigl( \cos 2\pi a \bigr).
\end{split}
\end{equation*}
The formula above implies the asymptotic formula (\ref{eq:asypinf1}). 
\end{proof}

\begin{proposition} \label{pro:a2}
Suppose $\sigma <0$. We have the following:\\
$(1)$ 
Let $0 < a < 1/4$. Then all real zeros of $P(s,a)$ are simple and only at the negative even integers.\\
$(2)$ Let $0 < a < 1/6$. Then all real zeros of $Z(s,a)$ are simple and only at the negative even integers.\\
$(3)$ When $1/6 < a < 1/4$, we have $Z(-2n,a) = 0$, $n \in {\mathbb{N}}$ and $Z(1-\beta_P(a),a) =0$, where $\beta_P(a) \, \colon (1/6, 1/4) \to (1, \infty)$ is appeared in Proposition \ref{pro:a1}. Moreover, we have
\begin{equation}\label{eq:asypinf2}
\beta_Z(a) = 1- \beta_P(a) =  \frac{\log (\cos 2\pi a)}{\log 2} + 1 + O\bigl( \cos 2\pi a \bigr)
\end{equation}
as $a \to 1/4-0$.
\end{proposition}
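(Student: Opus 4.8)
The plan is to read off the real zeros on $\sigma < 0$ directly from the behaviour on $\sigma > 1$ via the functional equations of Lemma \ref{lem:1}. Throughout I would substitute $s = 1 - \sigma$, so that the region $\sigma < 0$ corresponds to $s > 1$, where $\Gamma(s) > 0$ and the sign and monotonicity information of Proposition \ref{pro:a1} is available. In each of the three parts the mechanism is the same: one factor of the functional equation is a cosine whose simple zeros produce the negative even integers, and the companion factor $Z$ or $P$ controls whether any additional zeros appear.

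For (1), I would rewrite (\ref{eq:zfe1p}) as $P(\sigma,a) = \frac{2\Gamma(1-\sigma)}{(2\pi)^{1-\sigma}} \cos\bigl( \pi(1-\sigma)/2 \bigr) Z(1-\sigma,a)$ with $1-\sigma > 1$. Since $\Gamma(1-\sigma) > 0$ and, by part $(1)$ of Proposition \ref{pro:a1}, $Z(1-\sigma,a) > 0$ for all $0 < a \le 1/2$, the only possible vanishing of $P(\sigma,a)$ for $\sigma < 0$ comes from $\cos\bigl( \pi(1-\sigma)/2 \bigr)$. This cosine vanishes exactly when $1-\sigma$ is an odd integer greater than $1$, i.e. $\sigma \in \{-2,-4,-6,\dots\}$; its zeros are simple and the remaining factor is analytic and non-zero there, so every such zero of $P(\cdot,a)$ is simple, giving $(1)$ for $0 < a < 1/4$. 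Part $(2)$ is identical but uses (\ref{eq:zfe1}) in the form $Z(\sigma,a) = \frac{2\Gamma(1-\sigma)}{(2\pi)^{1-\sigma}} \cos\bigl( \pi(1-\sigma)/2 \bigr) P(1-\sigma,a)$, where part $(2)$ of Proposition \ref{pro:a1} supplies $P(1-\sigma,a) > 0$ when $0 < a < 1/6$. The narrowing of the range from $1/4$ to $1/6$ is forced precisely because the positivity of $P$ on $(1,\infty)$ is only guaranteed up to $a = 1/6$.

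For (3), with $1/6 < a < 1/4$, I would again use (\ref{eq:zfe1}). By Proposition \ref{pro:a1}$(3)$ the factor $P(1-\sigma,a)$ now has exactly one simple zero on $(1,\infty)$, located at $1-\sigma = \beta_P(a)$. Consequently $Z(\sigma,a)$ vanishes at the negative even integers $\sigma = -2n$ (from the cosine, $n \in {\mathbb{N}}$) and at the single extra point $\sigma = 1 - \beta_P(a) < 0$ (from $P$), which establishes the two stated identities. Writing $\beta_Z(a) := 1 - \beta_P(a)$ and inserting the asymptotic (\ref{eq:asypinf1}) yields $\beta_Z(a) = 1 + \log(\cos 2\pi a)/\log 2 + O(\cos 2\pi a)$ as $a \to 1/4 - 0$, which is exactly (\ref{eq:asypinf2}).

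The steps are essentially mechanical once Proposition \ref{pro:a1} is in hand; the only points that require care are the bookkeeping of which cosine zeros survive, and the verification of simplicity. On the first point, the boundary value $1 - \sigma = 1$, i.e. $\sigma = 0$, is excluded since we work strictly with $\sigma < 0$, so the clean description by negative even integers is unaffected. On the second, simplicity follows from the simplicity of the zeros of $\cos(\pi s/2)$ together with the non-vanishing of $\Gamma(s)$ and of the companion factor $Z(s,a)$ or $P(s,a)$ at those points. There is no substantive analytic obstacle remaining: the real content was already absorbed into the positivity and monotonicity statements of Proposition \ref{pro:a1}, and the present proposition is the formal transfer of those facts across the functional equation.
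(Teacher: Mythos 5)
Your proposal is correct and follows essentially the same route as the paper: the paper's own proof simply cites Proposition \ref{pro:a1} together with the functional equations (\ref{eq:zfe1}) and (\ref{eq:zfe1p}), which is exactly the transfer argument you carry out in detail. The only item the paper adds is the observation that for each $l \in {\mathbb{N}}$ there is a unique $a_l \in (1/6,1/4)$ with $1-\beta_P(a_l) = -2l$, in which case the zero of $Z(s,a_l)$ at $s=-2l$ is double rather than simple; this does not affect the identities claimed in part $(3)$, so your argument stands as written.
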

\begin{proof}
The statements above are easily proved by Proposition \ref{pro:a1}, the functional equations (\ref{eq:zfe1}) and (\ref{eq:zfe1p}). I should be noted that for any fixed $l \in {\mathbb{N}}$, there exists precisely one $1/6 < a_l < 1/4$ such that 
$$
\beta_Z(a_l) := 1-\beta_P(a_l) = -2l
$$
from bijectivity of $\beta_P(a) \, \colon (1/6, 1/4) \to (1, \infty)$. In this case, the all real zeros of $Z(s,a)$ are only at the negative even integers. However, the real zero at $s=-2l$ is not simple but double. 
\end{proof}

\subsection{Proofs of Theorems \ref{th:m1} and \ref{th:mt1}}
By using (\ref{eq:z1/6}) and (\ref{eq:p1/6}), we have the following.
\begin{proposition}\label{cor:Z}
When $0 < a < 1/4$, the function $Z(\sigma,a)$ have zeros at the non-positive integers and real zero $\beta_Z(a)$ in $(-\infty,1)$. 
\end{proposition}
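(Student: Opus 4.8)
The plan is to assemble this proposition from the case analyses already carried out in Propositions \ref{pro:c1}, \ref{pro:a1} and \ref{pro:a2}, together with the functional equation (\ref{eq:zfe1}) and the value $Z(0,a)=0$ recorded in (\ref{eq:zv1}); the only point not immediately covered by those results is the boundary value $a=1/6$, which I would settle directly from the factorization (\ref{eq:z1/6}).

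First I would record the zeros at the non-positive even integers. The origin is covered by $Z(0,a)=0$ from (\ref{eq:zv1}), and the negative even integers $-2,-4,\dots$ by parts (2) (for $0<a<1/6$) and (3) (for $1/6<a<1/4$) of Proposition \ref{pro:a2}; mechanically these arise because $\cos(\pi s/2)$ vanishes at the odd integers $s=2n+1>1$ in the functional equation (\ref{eq:zfe1}), while $\Gamma$ and $P$ remain finite there. This yields the advertised zeros at $0,-2,-4,\dots$ for every $0<a<1/4$.

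Next I would locate the remaining real zero $\beta_Z(a)$ by splitting on the position of $a$ relative to $1/6$. On $\sigma>1$, part (1) of Proposition \ref{pro:a1} gives $Z(\sigma,a)>0$, so no real zero occurs there. For $0<a<1/6$, part (2) of Proposition \ref{pro:c1} provides a unique simple zero $\beta_Z(a)\in(0,1)$, while part (2) of Proposition \ref{pro:a2} shows the only zeros with $\sigma<0$ are the negative even integers; hence $\beta_Z(a)\in(0,1)\subset(-\infty,1)$. For $1/6<a<1/4$, part (1) of Proposition \ref{pro:c1} gives $Z(\sigma,a)<0$ throughout $(0,1)$, so there is no zero in $(0,1)$, whereas part (3) of Proposition \ref{pro:a2} furnishes $Z(1-\beta_P(a),a)=0$ with $\beta_P(a)\in(1,\infty)$; thus $\beta_Z(a)=1-\beta_P(a)\in(-\infty,0)\subset(-\infty,1)$. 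In either regime there is exactly one real zero off the even-integer set, and it lies in $(-\infty,1)$.

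Finally I would treat $a=1/6$ using (\ref{eq:z1/6}), namely $Z(s,1/6)=(2^s-1)(3^s-1)\zeta(s)$. Here $\zeta(s)$ contributes the negative even integers, while $2^s-1$ and $3^s-1$ each vanish among real $s$ only at $s=0$; since $\zeta(0)=-1/2\neq0$, the origin is a double real zero, so $\beta_Z(1/6)=0$, in agreement with (\ref{eq:Z0P1w1/6}). Because the statement is essentially a repackaging of Propositions \ref{pro:c1}--\ref{pro:a2}, there is no single hard step; the only delicate bookkeeping is to check that across the three regimes $0<a<1/6$, $a=1/6$ and $1/6<a<1/4$ precisely one real zero lies off the non-positive even integers and that it migrates continuously from $(0,1)$ through the origin into $(-\infty,0)$ as $a$ crosses $1/6$, for which I lean on the strict monotonicity in $a$ from Lemma \ref{lem:4} rather than re-deriving it.
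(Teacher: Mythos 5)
Your proposal is correct and follows essentially the same route as the paper, which likewise assembles the result from (3) of Propositions \ref{pro:c1} and \ref{pro:a2} together with the factorization (\ref{eq:z1/6}) giving the double zero of $Z(s,1/6)$ at $\sigma=0$. You simply make explicit the bookkeeping (the positivity on $\sigma>1$ from Proposition \ref{pro:a1}, the zeros at the non-positive even integers, and the monotone migration of $\beta_Z(a)$ across $a=1/6$) that the paper's two-line proof leaves implicit.
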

\begin{proof}
By (\ref{eq:z1/6}), the function $Z(s,a)$ has a double real zero at $\sigma =0$ when $a=1/6$. Hence we have this proposition from (3) of Propositions \ref{pro:c1} and \ref{pro:a2}. 
\end{proof}

\begin{proposition}\label{cor:p}
When $0 < a < 1/4$, the function $P(\sigma,a)$ have precisely one simple zero $\beta_P(a)$ in $(0, \infty)$. Furthermore, the function $\beta_P(a) \, \colon (0, 1/4) \to (0,\infty)$ is a strictly increasing $C^\infty$-diffeomorphism.
\end{proposition}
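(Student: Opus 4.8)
The plan is to glue together the two strictly increasing $C^\infty$-diffeomorphisms already produced in (3) of Proposition \ref{pro:c1} and in (3) of Proposition \ref{pro:a1}, the former mapping $(0,1/6)$ onto $(0,1)$ and the latter mapping $(1/6,1/4)$ onto $(1,\infty)$, and then to verify that they fit together across the single remaining parameter $a=1/6$. Thus the only genuinely new work is a unified existence/uniqueness statement valid on all of $(0,\infty)$ (rather than on $(0,1)$ and $(1,\infty)$ separately) and the analysis of that junction.

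First I would fix $0<a<1/4$ and establish that $P(\sigma,a)$ has exactly one zero in $(0,\infty)$ and that it is simple. Here Lemma \ref{lem:3} does almost all of the work: writing $\alpha:=-\log(\cos 2\pi a)>0$, the function $h_a(\sigma):=\alpha^{-\sigma}\Gamma(\sigma)P(\sigma,a)$ is strictly increasing for $\sigma>0$. Using $P(0,a)=-1$ from \eqref{eq:pv1} together with $\Gamma(\sigma)\sim\sigma^{-1}$ as $\sigma\to+0$ gives $h_a(\sigma)\to-\infty$, while $P(\sigma,a)\to 2\cos 2\pi a>0$ as $\sigma\to\infty$ together with $\Gamma(\sigma)\to\infty$ gives $h_a(\sigma)\to+\infty$; hence $h_a$ has a unique zero, and since $\alpha^{-\sigma}\Gamma(\sigma)>0$ this is the unique zero $\beta_P(a)$ of $P(\cdot,a)$ in $(0,\infty)$. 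Its simplicity and its location follow by matching with the earlier results: for $0<a<1/6$ it is the simple zero in $(0,1)$ of Proposition \ref{pro:c1}(2) (there being none in $(1,\infty)$ by Proposition \ref{pro:a1}(2) and $P(1,a)>0$ by \eqref{eq:pv1}); for $1/6<a<1/4$ it is the simple zero in $(1,\infty)$ of Proposition \ref{pro:a1}(3) (there being none in $(0,1]$ by Proposition \ref{pro:c1}(1) and $P(1,a)<0$); and at $a=1/6$ the factorization \eqref{eq:p1/6}, namely $P(s,1/6)=(2^{1-s}-1)(3^{1-s}-1)\zeta(s)$, shows $\beta_P(1/6)=1$ with $P(s,1/6)\sim(\log 2)(\log 3)(s-1)$ near $s=1$, so the zero is simple.

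Next I would assemble the global map. On $(0,1/6)$ and on $(1/6,1/4)$ the function $a\mapsto\beta_P(a)$ is exactly the strictly increasing $C^\infty$-diffeomorphism of Proposition \ref{pro:c1}(3) (onto $(0,1)$) and of Proposition \ref{pro:a1}(3) (onto $(1,\infty)$), and $\beta_P(1/6)=1$ bridges them; since $\beta_P(a)\to 1^-$ as $a\to 1/6^-$ and $\beta_P(a)\to 1^+$ as $a\to 1/6^+$, the glued map is continuous and strictly increasing on all of $(0,1/4)$ and is a bijection onto $(0,1)\cup\{1\}\cup(1,\infty)=(0,\infty)$. It remains only to promote smoothness across the seam $a=1/6$: I would apply the implicit function theorem to $P(\sigma,a)=0$ at $(\sigma,a)=(1,1/6)$, where $\partial_\sigma P(1,1/6)=(\log 2)(\log 3)\ne 0$ by the factorization and $\partial_a P(\sigma,a)<0$ by Lemma \ref{lem:4}; this yields a $C^\infty$ local solution that must coincide with $\beta_P$, with $\beta_P'(1/6)=-\partial_a P/\partial_\sigma P>0$, so $\beta_P$ is a strictly increasing $C^\infty$ function near $1/6$ as well. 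The inverse is $C^\infty$ by the inverse function theorem, completing the diffeomorphism claim.

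The main obstacle I anticipate is precisely the junction $a=1/6$: the two cited propositions only cover the open subintervals $(0,1/6)$ and $(1/6,1/4)$ and have target intervals $(0,1)$ and $(1,\infty)$, so neither the value $\beta_P(1/6)=1$ nor the $C^\infty$-regularity there is given for free. Handling it requires the explicit factorization \eqref{eq:p1/6} to pin down both $\beta_P(1/6)=1$ and the non-vanishing of $\partial_\sigma P$ at that point, after which the implicit function theorem welds the two pieces into one strictly increasing $C^\infty$-diffeomorphism onto $(0,\infty)$.
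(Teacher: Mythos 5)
Your proposal is correct and follows essentially the same route as the paper's own proof: the paper likewise uses the factorization (\ref{eq:p1/6}) to place a simple zero at $\sigma=1$ for $a=1/6$, glues the bijections from (3) of Propositions \ref{pro:c1} and \ref{pro:a1}, and obtains monotonicity and $C^\infty$-smoothness (including across the seam) from $(\partial/\partial a)P(\sigma,a)<0$ in Lemma \ref{lem:4} together with the implicit and inverse function theorems. You merely spell out details the paper leaves implicit, such as the unified uniqueness argument via Lemma \ref{lem:3} and the explicit value $\partial_\sigma P(1,1/6)=(\log 2)(\log 3)$.
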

\begin{proof}
From (\ref{eq:p1/6}), the function $P(s,a)$ has a simple real zero at $\sigma =1$ when $a=1/6$. Thus $\beta_P(a) \, \colon (0, 1/4) \to (0, \infty)$ is a bijection from (3) of Propositions \ref{pro:c1} and \ref{pro:a1}. We can see that $\beta_P(a)$ is a strictly increasing $C^\infty$-function by $(\partial / \partial a) P(\sigma,a) < 0$ proved in Lemma \ref{lem:4} and the method used in the proof of (3) of Proposition \ref{pro:c1}. We can show that the inverse of $\beta_P(a)$ is also a $C^\infty$-function likewise.
\end{proof}

In order to prove Theorems \ref{th:m1} and \ref{th:mt1}, we show the following.
\begin{proposition}\label{pro:ksz1}
Let $1/4 \le a \le 1/2$. Then all real zeros of $Z(s,a)$ are simple and only at the non-positive even integers.
\end{proposition}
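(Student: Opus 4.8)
The plan is to split the real axis into the four regions $\sigma>1$, $0<\sigma<1$, $\sigma=0$ and $\sigma<0$, and to treat each via the functional equation (\ref{eq:zfe1}) together with the monotonicity in $a$ supplied by Lemma \ref{lem:4}, anchored at the explicit base case $a=1/4$. For $\sigma>1$ nothing beyond (1) of Proposition \ref{pro:a1} is needed, which already gives $Z(\sigma,a)>0$ for every $0<a\le 1/2$. The value $Z(0,a)=0$ recorded in Lemma \ref{lem:2} furnishes the zero at the non-positive even integer $s=0$.

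For the strip $0<\sigma<1$, I would first invoke the identity (\ref{eq:z14}), namely $Z(\sigma,1/4)=2^\sigma(2^\sigma-1)\zeta(\sigma)$, and combine it with $\zeta(\sigma)<0$ on $(0,1)$ (see \cite[(2.12.4)]{Tit}) to obtain $Z(\sigma,1/4)<0$; the endpoint $a=1/2$ follows identically from (\ref{eq:z24}). Since Lemma \ref{lem:4} yields $(\partial/\partial a)Z(\sigma,a)<0$ for $0<\sigma<1$, monotonicity then forces $Z(\sigma,a)<Z(\sigma,1/4)<0$ for every $a\in(1/4,1/2)$, so $Z(\sigma,a)$ never vanishes on $(0,1)$ when $1/4\le a\le 1/2$.

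The decisive region is the half-plane $\sigma<0$, which I would handle through (\ref{eq:zfe1}), $Z(1-s,a)=\frac{2\Gamma(s)}{(2\pi)^s}\cos(\frac{\pi s}{2})P(s,a)$. Writing $w=1-s$, the real zeros of $Z(w,a)$ with $w\le 0$ correspond to real $s\ge 1$, where $\Gamma(s)>0$, so they can arise only from the simple zeros of $\cos(\pi s/2)$ at the odd integers $s=1,3,5,\dots$ (which give exactly $w=0,-2,-4,\dots$) or from real zeros of $P(s,a)$ with $s>1$. Hence the whole statement reduces to proving $P(\sigma,a)\ne 0$ for $\sigma>1$ and $1/4\le a\le 1/2$. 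This is the main obstacle, since Lemma \ref{lem:3} is unavailable here: once $a\ge 1/4$ one has $\cos 2\pi a\le 0$, so $-\log(\cos 2\pi a)$ is no longer real. I would instead argue again by monotonicity in $a$: the identity (\ref{eq:z14t}) gives $P(\sigma,1/4)=2^{1-\sigma}(2^{1-\sigma}-1)\zeta(\sigma)<0$ for $\sigma>1$ (now $\zeta(\sigma)>0$ while $2^{1-\sigma}-1<0$), and (\ref{eq:z24t}) settles $a=1/2$; Lemma \ref{lem:4} then propagates $P(\sigma,a)<0$ to all of $(1/4,1/2)$.

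Finally, simplicity follows because each zero of $Z$ at a non-positive even integer comes from a simple zero of $\cos(\pi s/2)$ multiplied by the nonvanishing factor $\frac{2\Gamma(s)}{(2\pi)^s}P(s,a)$: the previous step gives $P(s,a)\ne 0$ at every odd $s\ge 3$, while at $s=1$ the value $P(1,a)=-2\log(2\sin\pi a)$ from Lemma \ref{lem:2} is nonzero because $2\sin\pi a\ge\sqrt{2}>1$ when $1/4\le a\le 1/2$. Since the substitution $w=1-s$ is biholomorphic it preserves the order of the zero, so $Z(s,a)$ has only simple zeros, located precisely at the non-positive even integers, as claimed.
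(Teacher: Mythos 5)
Your proof is correct, but it follows a genuinely different route from the paper's. The paper's key step is a pointwise sign argument: using the integral representation (\ref{eq:intzc1}) and the computation (\ref{eq:PImk1}), it observes that $\cos 2\pi a\le 0$ for $1/4\le a\le 1/2$ forces the integrand's real part to be negative for every $x>0$, hence $P(\sigma,a)<0$ for \emph{all} $\sigma>0$ (this is (\ref{eq:kspneg1})); a single application of the functional equation (\ref{eq:zfe1}) then disposes of the whole half-line $\sigma<1$ at once, with $\sigma>1$ covered by (1) of Proposition \ref{pro:a1} exactly as you do. You instead avoid the integral representation entirely: you anchor at the explicit factorizations $Z(\sigma,1/4)=2^{\sigma}(2^{\sigma}-1)\zeta(\sigma)$, $Z(\sigma,1/2)=2(2^{\sigma}-1)\zeta(\sigma)$, $P(\sigma,1/4)=2^{1-\sigma}(2^{1-\sigma}-1)\zeta(\sigma)$, $P(\sigma,1/2)=2(2^{1-\sigma}-1)\zeta(\sigma)$ together with the sign of $\zeta(\sigma)$, and propagate negativity across $1/4<a<1/2$ by the monotonicity $(\partial/\partial a)Z<0$, $(\partial/\partial a)P<0$ of Lemma \ref{lem:4}, treating the strip $0<\sigma<1$ directly through $Z$ and the half-plane $\sigma<0$ through the functional equation plus nonvanishing of $P$ on $\sigma>1$ and at $\sigma=1$ (via Lemma \ref{lem:2}). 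Your approach is more elementary in that it only needs the boundary cases where everything reduces to $\zeta$, and it is self-contained modulo Lemma \ref{lem:4}; its costs are a finer case split (strip, $s=0$, $\sigma<0$, and the separate value $P(1,a)=-2\log(2\sin\pi a)\ne 0$) and the fact that it only controls $P$ on $\sigma\ge 1$, whereas the paper's inequality $P(\sigma,a)<0$ on all of $\sigma>0$ is stronger and is reused verbatim to prove Proposition \ref{pro:ksp1}. One could also note that your monotonicity trick would in fact give $P(\sigma,a)<0$ on $0<\sigma<1$ as well, so your method could be streamlined to mirror the paper's single functional-equation step.
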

\begin{proof}
One has $Z(\sigma,a) >0$ when $\sigma >1$ from (1) of Proposition \ref{pro:a1}. 
When $1/4 \le a \le 1/2$, one has $-1 \le \cos 2\pi a \le 0$. Hence for all $x>0$, we have
$$
\Re \biggl( \frac{e^{2\pi ia}}{e^x-e^{2\pi ia}} \biggr)  =
\frac{e^x \cos 2\pi a -1}{(e^x - \cos 2\pi a)^2 + \sin ^2 2\pi a} <0 
$$
by (\ref{eq:PImk1}). The inequality above and (\ref{eq:intzc1}) imply
\begin{equation}\label{eq:kspneg1}
P(\sigma,a) = 2\sum_{n=1}^\infty \frac{\cos 2\pi na}{n^\sigma} =
\Re \biggl( \frac{e^{2\pi ia}}{\Gamma (\sigma)} \int_0^\infty \frac{2 x^{\sigma-1}}{e^x-e^{2\pi ia}} dx \biggr)< 0,
\qquad \sigma >0 .
\end{equation}

It is widely known that $\Gamma (s)$ is analytic when $\sigma >0$ and 
\begin{equation}\label{eq:gamposi}
\Gamma (\sigma) > 0, \qquad \sigma >0.
\end{equation}
Therefore, all real zeros of $Z(1-s,a)$ with $1/4 \le a \le 1/2$ and $\sigma >0$ come from $\cos ( \pi s/2 ) = 0$ with $\sigma >0$ by (\ref{eq:zfe1}), (\ref{eq:kspneg1}) and (\ref{eq:gamposi}). Hence, every real zero of $Z(s,a)$ with $1/4 \le a \le 1/2$ and $\sigma <1$ is caused by $\cos ( \pi (1-s)/2) = 0$ with $\sigma < 1$ which is equivalent to that $s$ is a non-positive even integer. Furthermore, we can easily see that all real zeros of $Z(s,a)$ are simple by the equation above. 
\end{proof}

\begin{proposition}\label{pro:ksp1}
Assume $1/4 \le a \le 1/2$. Then all real zeros of the function $P(s,a)$ are simple and only at the negative even integers.
\end{proposition}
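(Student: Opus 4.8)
The plan is to run the argument of Proposition \ref{pro:ksz1} with the roles of $P$ and $Z$ interchanged, now using the companion functional equation (\ref{eq:zfe1p}) in place of (\ref{eq:zfe1}). The two facts I would lean on are both already available: the inequality (\ref{eq:kspneg1}), i.e. $P(\sigma,a)<0$ for every $\sigma>0$ when $1/4\le a\le 1/2$, and the positivity $Z(\sigma,a)>0$ for $\sigma>1$ furnished by (1) of Proposition \ref{pro:a1}.

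First I would dispose of the closed half-plane $\sigma\ge 0$. For $\sigma>0$ the inequality (\ref{eq:kspneg1}) gives $P(\sigma,a)<0$, so no real zero occurs there; and at $\sigma=0$ the evaluation $P(0,a)=-1$ from (\ref{eq:pv1}) of Lemma \ref{lem:2} shows that $s=0$ is not a zero either. Consequently every real zero of $P(s,a)$ must lie in the region $\sigma<0$.

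Next I would treat $\sigma<0$ via the functional equation (\ref{eq:zfe1p}). Setting $s=1-w$ with $\Re(w)=1-\sigma>1$, the equation (\ref{eq:zfe1p}) writes $P(s,a)$ as the product of $2\Gamma(w)(2\pi)^{-w}$, $\cos(\pi w/2)$ and $Z(w,a)$. Since $\Re(w)>1$, both $\Gamma(w)>0$ and $Z(w,a)>0$ are nonvanishing, so the real zeros of $P(s,a)$ with $\sigma<0$ are exactly the solutions of $\cos(\pi(1-s)/2)=0$. Solving this yields $s=-2k$ with $k\in\mathbb{N}$, that is, precisely the negative even integers; simplicity follows because $\cos(\pi(1-s)/2)$ has only simple zeros while $\Gamma(1-s)\,Z(1-s,a)$ is holomorphic and nonzero at each such point.

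The only delicate point, and the thing that forces the case split above, is the interface $s=0$, where $w=1$: there $Z(w,a)$ has its simple pole while $\cos(\pi w/2)$ vanishes, so (\ref{eq:zfe1p}) degenerates to an indeterminate $0\cdot\infty$ form and cannot be applied directly. This is exactly why $\sigma=0$ must be excluded from the functional-equation step and handled separately through $P(0,a)=-1$. Apart from this bookkeeping, the proof is a routine transcription of Proposition \ref{pro:ksz1}, so I expect no further obstacle.
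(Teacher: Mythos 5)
Your proposal is correct and takes essentially the same route as the paper: negativity of $P(\sigma,a)$ for $\sigma>0$ via (\ref{eq:kspneg1}), the value $P(0,a)=-1$ from (\ref{eq:pv1}) to handle $s=0$, and then the functional equation (\ref{eq:zfe1p}) combined with $\Gamma(\sigma)>0$ and $Z(\sigma,a)>0$ for $\sigma>1$ to reduce the zeros in $\sigma<0$ to the simple zeros of the cosine factor. Your explicit remark about the indeterminate $0\cdot\infty$ form at $w=1$ is a nice clarification of why the paper quotes $P(0,a)=-1$ rather than arguing through the functional equation at that point.
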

\begin{proof}
We have $P(\sigma ,a) <0$ when $\sigma >0$ and $1/4 \le a \le 1/2$ by (\ref{eq:kspneg1}). Moreover, we have $P(0,a)=-1$ from (\ref{eq:pv1}). Therefore, all real zeros of $P(1-s,a)$ with $1/4 \le a \le 1/2$ and $\sigma > 1$ come from $\cos (\pi s/2) = 0$ with $\sigma >1$ by (\ref{eq:zfe1p}), (\ref{eq:gamposi}) and (1) of Proposition \ref{pro:a1}. Obviously, all real zeros of the function $P(s,a)$ are simple by the equation above or the functional equation (\ref{eq:zfe1p}). 
\end{proof}

Now we are in position to prove Theorems \ref{th:m1} and \ref{th:mt1}
\begin{proof}[Proof of Theorem \ref{th:m1}]
When $0 < a < 1/6$, $Z(s,a)$ has precisely one simple real zero in the open interval $(0, 1)$ from (3) of Proposition \ref{pro:c1}. If $a=1/6$, $Z(s,a)$ has a double real zero at $\sigma =0$ by (\ref{eq:z1/6}). Let $1/6 < a < 1/4$. Then, from (3) of Proposition \ref{pro:a2}, $Z(s,a)$ has a double real zero at $\sigma = -2n$, $n \in {\mathbb{N}}$ or a simple real zero for $\sigma < 0$ and $\sigma \ne -2n$. Proposition \ref{pro:ksz1} implies the case $1/4 \le a \le 1/2$.
\end{proof}

\begin{proof}[Proof of Theorem \ref{th:mt1}]
From Proposition \ref{cor:p}, the functions $P(\sigma,a)$ have precisely one simple zero in $(0, \infty)$ when $0 < a < 1/4$. Proposition \ref{pro:ksp1} implies the case $1/4 \le a \le 1/2$.
\end{proof}

\subsection{Proofs of Propositions \ref{pro:z1}, \ref{pro:p1} and \ref{pro:zero1}}
We prove the remaining propositions.
\begin{proof}[Proofs of Propositions \ref{pro:z1} and \ref{pro:p1}]
These are easily proved by Propositions \ref{pro:c1}, \ref{pro:a1} and \ref{pro:a2}, \ref{cor:Z} and \ref{cor:p}.
\end{proof}

We prove Proposition \ref{pro:zero1} only for the function $Z(s,a)$. We can show this proposition for other zeta functions similarly. 
\begin{proof}[Proof of Proposition \ref{pro:zero1}]
The upper bound for the number of zeros  of $Z(s,a)$ is proved by the Bohr-Landau method (see \cite[Theorem 9.15 (A)]{Tit}), the mean square of $\zeta (s,a)$ (see \cite[Theorem 4.2.1]{LauGa}) and the inequality
$$
\int_2^T \bigl| Z(\sigma+it,a) \bigr|^2 dt \le 
2\int_2^T \bigl| \zeta(\sigma+it,a) \bigr|^2 dt + 2\int_2^T \bigl| \zeta(\sigma+it,1-a) \bigr|^2 dt,
\qquad \sigma >1/2.
$$

First assume that $a \ne 1/2,1/3,1/4,1/6$ is rational. Then the lower bounds for the number of zeros of $Z(s,a)$ with $\sigma >1$ and $1/2 < \sigma <1$ are prove by (\ref{eq:qq}), \cite[Corollary]{SW}, \cite[Theorem 2]{KaczorowskiKulas}, the definition of the zeta function $Z(s,a)$ and the fact that $\varphi (q) \le 2$ if and only if $q=1,2,3,4,6$.

Next suppose $a$ is transcendental and $1/2 < \sigma <1$. Then $\zeta (s,a)$ and $\zeta(s,1-a)$ have the joint universality by \cite[Theorem 5]{MZ}. Hence we can prove Proposition \ref{pro:p1} in this case by modifying the proof of \cite[Theorem 2]{KaczorowskiKulas} (see also \cite[Theorem 3]{KaczorowskiKulas}). 

Finally, consider the case $a$ is transcendental and $\sigma >1$. We can easily see that the set
$$
\bigl\{ \log (n+a) : n \in {\mathbb{N}} \cup \{0\} \bigr\} \cup 
\bigl\{ \log (n+1-a) : n \in {\mathbb{N}} \cup \{0\} \bigr\}
$$
is linearly independent over ${\mathbb{Q}}$. Hence, by using the Davenport and Heilbronn method (see for example \cite[p.~162]{LauGa}), the function $Z(s,a)$ has more than $C_a^\flat (T)$ in the rectangle $1 < \sigma < 1+\delta$ and $0 < t < T$ when $T$ is sufficiently large. 
\end{proof}

\subsection*{Acknowledgments}
The author was partially supported by JSPS grant 16K05077. \\

 

\begin{thebibliography}{1}
\bibitem{Apo} 
T.~M.~Apostol, \textit{Introduction to Analytic Number Theory}. Undergraduate Texts in Mathematics, Springer, New York, 1976.

\bibitem{Cohen} 
{\rm H.~Cohen}, {\em Number theory. Vol.~II. Analytic and modern tools} (Graduate Texts in Mathematics, 240. Springer, New York, 2007). 

\bibitem{ConSou} 
J.~B.~Conrey and K.~Soundararajan, {\it{Real zeros of quadratic Dirichlet $L$-functions}}. Invent.~Math. {\bf{150}} (2002), no.~1, 1--44. 

\bibitem{ES} 
K.~Endo and Y.~Suzuki, {\it{Real zeros of Hurwitz zeta-functions and their asymptotic behavior in the interval $(0,1)  $}}, J.~Math.~Anal.~Appl. {\bf{473}} (2019), no.~2, 624--635.

\bibitem{Fine}
N. J. Fine, {\it{Note on the Hurwitz zeta-function}}, Proceedings of the American Mathematical Society. {\bf{2}}, (1951), 361--364.

\bibitem{KaczorowskiKulas} J.~Kaczorowski and M.~Kulas, {\it{On the non-trivial zeros off line for $L$-functions from extended Selberg class}}, Monatshefte Math. \textbf{150} (2007), no.~3, 217--232.

\bibitem{KaVo} 
A.~A.~Karatsuba and S.~M.~Voronin, {\it{The Riemann zeta-function}}. Translated from the Russian by Neal Koblitz. De Gruyter Expositions in Mathematics, 5. Walter de Gruyter $\&$ Co., Berlin, 1992.

\bibitem{Kob} 
N.~Koblitz, {\it{Introduction to elliptic curves and modular forms}}. Second edition. Graduate Texts in Mathematics, {\bf{97}}. Springer-Verlag, New York, 1993. 

\bibitem{LauGa}
A.~Laurin\v{c}ikas and R.~Garunk\v{s}tis, {\it{The Lerch zeta-function}}. Kluwer Academic Publishers, Dordrecht, 2002.

\bibitem{Mts}
T.~Matsusaka, {\it{Real zeros of the Hurwitz zeta function}}, Acta Arithmetica, {\bf{183}} (2018),  no.~1, 53--62. (arXiv: 1610.07945).

\bibitem{MZ}
H.~Mishou, {\it{The joint universality theorem for a pair of Hurwitz zeta functions}}. J.~Number Theory {\bf{131}} (2011), no.~12, 2352--2367. 


\bibitem{NaC}
T.~Nakamura, {\it{Real zeros of Hurwitz-Lerch zeta and Hurwitz-Lerch type of Euler-Zagier double zeta functions}}. Math.~Proc.~Cambridge Philos.~Soc. {\bf{160}} (2016), no.~1, 39--50. 

\bibitem{NJMAA}
T.~Nakamura, {\it{Real zeros of Hurwitz-Lerch zeta functions in the interval $(-1,0)$}}. J.~Math.~Anal.~Appl. {\bf{438}} (2016),  no.~1, 42--52.

\bibitem{NPFE}
T.~Nakamura, {\it{Functional equation and zeros on the critical line of the quadrilateral zeta function}}, to appear in J.~Number Theory, arXiv:1910.09837.

\bibitem{NaIn}
T.~Nakamura, {\it{The values of zeta functions composed by the Hurwitz and periodic zeta functions at integers}}, arXiv:2006.03300
.

\bibitem{SW}
E.~Saias and A.~Weingartner, {\it{Zeros of Dirichlet series with periodic coefficients}}, Acta Arith. {\bf{140}} (2009), no.~4, 335--344. 

\bibitem{Tit} E.~C.~Titchmarsh, {\it{The theory of the Riemann zeta-function,}} Second edition. Edited and with a preface by D.~R.~Heath-Brown. The Clarendon Press, Oxford University Press, New York, 1986. 

\end{thebibliography}
\end{document}